%
%

\documentclass[12pt]{amsart}
\usepackage{amssymb,amsmath}
\oddsidemargin=-.0cm
\evensidemargin=-.0cm
\textwidth=16cm
\textheight=22cm
\topmargin=0cm



\def\R {\mathbb{R}}

\def\H {{\mathcal H}}

\def \and{\qquad\text{and}\qquad}

\def\Dt{\partial_t}
\def\Dx{\Delta_x}
\def\Nx{\nabla_x}
\def\Cal{\mathcal}
\def\eb{\varepsilon}
\def\({\left(}
\def\){\right)}
\def\Bbb{\mathbb}
\def\dist{\operatorname{dist}}

\newtheorem{proposition}{Proposition}[section]
\newtheorem{theorem}[proposition]{Theorem}
\newtheorem{corollary}[proposition]{Corollary}
\newtheorem{lemma}[proposition]{Lemma}
\theoremstyle{definition}
\newtheorem{definition}[proposition]{Definition}
\newtheorem{remark}[proposition]{Remark}

\numberwithin{equation}{section}


\def \no#1#2#3 {{\bf #1} (#3), #2.}
\def \eds#1#2#3 {#1, #2, #3.}

\title[quasi-linear strongly damped wave equation]{Finite-dimensional attractors
for the quasi-linear strongly-damped wave equation}

\author[Varga Kalantarov and Sergey Zelik]{ Varga Kalantarov${}^1$ and Sergey Zelik${}^2$}

\begin{document}
\begin{abstract} We present a new method of investigating  the
so-called quasi-linear strongly damped wave equations
$$
\Dt^2u-\gamma\Dt\Dx u-\Dx u+f(u)=\Nx\cdot \phi'(\Nx u)+g
$$
in  bounded 3D domains. This method allows us to establish the
existence and uniqueness of energy solutions in the case where the
growth exponent of the non-linearity $\phi$ is less than 6 and $f$
may have arbitrary polynomial growth rate. Moreover, the existence of
a finite-dimensional global and exponential attractors for the
solution semigroup associated with that equation and their
additional regularity are also established. In a particular case
$\phi\equiv0$ which corresponds to the so-called semi-linear
strongly damped wave equation, our result allows to remove the
long-standing growth restriction $|f(u)|\le C(1+ |u|^5)$.
\end{abstract}

\begin{address}
 {${}^1$ Department of Mathematics, Ko\c c University, Istanbul,
 Turkey}
\end{address}
\begin{address}
 {${}^2$  Department of Mathematics, University of Surrey, Guildford, GU2 7XH, UK}
\end{address}

\subjclass[2000]{35Q20, 37L30, 73F15}
\keywords{Quasi-linear strongly damped wave equation, energy
solutions, uniqueness, regularity, global attractor}

\maketitle
\section{Introduction}\label{i}
We consider the following quasi-linear strongly damped wave equation
in a smooth bounded domain $\Omega\subset\R^3$:
\begin{equation}
\begin{cases}\label{1.main}
\Dt^2u-\gamma\Dt\Dx u-\Dx u+f(u)=\Nx\cdot \phi'(\Nx u)+g,\\
u\big|_{\partial\Omega}=0, \ \ \xi_u(0):=(u(0),\Dt u(0))=\xi_0,
\end{cases}
\end{equation}
where $\Dx $ is a Laplacian with respect to the variable
$x\in\Omega$, $u=u(t,x)$ is an unknown function, $\gamma>0$ is a fixed positive number, $g\in
L^2(\Omega)$ are given external forces, and  $\phi\in C^2(\R^3\rightarrow
\R^1) $ and $f\in C^2(\R^1\rightarrow \R^1)$ are known functions satisfying the following
conditions:
\begin{equation}\label{1.int}
 a_0|\eta|^{p-1}\le\phi''(\eta)\le a_1(1+|\eta|^{p-1}), \ \ \ \ \forall
\eta \in\R^3,
\end{equation}
for some positive $a_i$ and $p\in[1,5)$
and
\begin{equation}\label{3.int}
-C+ a|s|^{q} \leq f'(s)\leq C(1+|s|^{q} ),\ \ \ \ \
\forall s \in\R^1,
\end{equation}
for some  $C>0$, $a>0$, and $q>0$.
Note that assumption \eqref{1.int} reads
\begin{equation}\label{2.int}
a_0 |\eta|^{p-1}|\xi|^2\le
\sum_{i,j=1}^3\phi''_{\eta_i\eta_j}(\eta)\xi_i\xi_j \le  a_1(1+ |\eta|^{p-1})|\xi|^2, \ \ \ \
\ \forall \xi, \eta \in\R^3
\end{equation}
and typical examples of the nonlinearities satisfying above
conditions  \eqref{1.int} and \eqref{3.int} are
\begin{equation}\label{3.nonlinearity}
 f(u)= u|u|^q-Cu,  \ \ \ \ \ \phi (\eta)=
|\eta|^{p+1}.
\end{equation}

Equations of the form \eqref{1.main} occur in the study of motion of
viscoelastic materials. For instance, in one dimensional case, they model
longitudinal vibration of a uniform, homogeneous bar with nonlinear
stress law given by the function $\phi(u_x)$. In two  and
three dimensional cases they describe antiplane shear motions
of viscoelastic solids. We refer to  \cite{FiLaOn,Kn,MM} for physical origins
and derivation of mathematical models of motions of viscoelastic media and only recall here
that, in applications, the unknown $u$ naturally  represents the displacement of the body relative
to a fixed reference configuration. By this reason, it would be
more physical to consider equation \eqref{1.main} in the {\it vector} case $u=(u^1,u^2,u^3)$.
 However, in order to avoid the additional technicalities,
we prefer to study only the {\it scalar} case of equation
\eqref{1.main} although the most part of our
 results can be straightforwardly extended to the vector case with
 the {\it convex} energy $\phi$ satisfying the analogue of
 conditions \eqref{1.int}.
 It  also worth to note that, even for $n=1$,  problem
\eqref{1.main}  may not have a global classical solution if the viscosity term $\gamma\Dt\Dx u$
is omitted. Thus the inclusion of
 this damping  term represents a regularization of the equation and prevents the blow up of
 solutions.
\par
The Cauchy problem (for the case $\Omega=\R^n$) and the boundary
value problem for equation \eqref{1.main} under the different assumptions on the nonlinearities $\phi$ and $f$
have been studied in many papers (see
\cite{Cl1,Cl2,En1,En2,En3,Fei,FrNe,GGZ,He,IMM,KPS,KS,Le,MM,NN,On,Pe,Ya1} and
references therein). The most understood is the semi-linear case
$\phi\equiv0$:
\begin{equation}\label{sem1}
\Dt^2u-\gamma\Dt\Dx u-\Dx u+f(u)=g
\end{equation}
 which is  usually refereed as strongly damped wave equation or
pseudohyperbolic equation, see e.g.,
\cite{AS,GM,He,PZ,K86,K88,Le,We}.
As well as we know, one of the first essential results on global
behavior of solutions  problem  \eqref{sem1} was  obtained
by Webb \cite{We}. He proved that, if $\Omega \in\R^3$ and the
nonlinear term satisfies the standard dissipativity conditions
(without any growth restrictions),
 problem \eqref{sem1} has a unique strong solution belonging
to the space $[H^2\cap H^1_0]\times L^2$  and each strong solution
 tends to the appropriate stationary solution  as $t\rightarrow \infty$. This result inspired the further
studies of the long-time behavior of solutions of that equations.
 In particular, the related operator-differential equation in
a Banach space $X$:
\begin{equation}\label{sem2}
u_{tt}+\alpha Au_t +Au =f(u), \ \alpha >0
\end{equation}
was considered in \cite{Fi,Ma}.
Here $A$ is a sectorial operator with compact resolvent on $X$ and $f$ is a nonlinear
operator satisfying some regularity and growth conditions.
Then the semigroup
 $S(t)$ generated by the Cauchy problem for \eqref{sem2}
 in the phase space  $X^{\sigma}\times X^{\beta}, 0\leq \sigma \leq \beta<1$, $X^s:=D(A^{s/2})$ is well
 posed, dissipative and asymptotically compact and therefore possesses
a global attractor in the above phase space.

Semi-linear equation \eqref{sem1} with the growth restriction
$f(u)\sim u|u|^{q}$, $q<4$ is studied in \cite{K86,K88,GM}.
It was shown that the  semigroup associated with energy solutions possesses  a global
attractor whose fractal dimension is finite. The closest to our
study are the results and methods of \cite{PZ} where the regularity of the global attractor
in the energy phase space for three
dimensional problem \eqref{sem1} with nonlinear term growing as
$u|u|^4$  is  established, see also \cite{Pata}. Note that, during the long time,
the growth exponent $q=4$ has been considered as a critical one
for establishing the  uniqueness and asymptotic regularity of
energy solutions for \eqref{sem1}. As we will see below, this
restriction is occurred artificial and the above result remains true
without any  restrictions on the exponent $q$.
\par
The quasi-linear case $\phi\ne0$ is much more complicated. However, in the case of one spatial dimension,
more or less complete theory of this equation can be found in the
literature
(see
\cite{GMM},\cite{CGW}, \cite{DG}, \cite{KLY} and references
therein).
 The study of the analytic properties and long-time behavior of
 solutions of such quasi-linear problems was initiated by the
 paper of Greenberg, MacCamy and Misel \cite{GMM}, where the
authors considered initial boundary value problem for the equation
$$
\rho \sb{0}u\sb{tt}=\phi \sp{\prime} \,(u\sb{x})u\sb{xx}+\lambda
u\sb{xtx}.
$$
  It is shown that if $\lambda>0$, and the initial
functions are sufficiently smooth, then  a unique, smooth solution
exists, which moreover goes to zero as $t$ goes to infinity.

The existence of a global attractor for the one-dimensional problem
$$
u_{tt}-\alpha u_{xxt}-\partial_x\phi(u_x)+f(u)=g(x),\  x\in(0,1), \ t>0
$$
in the phase space $[H^2\cap H^1_0]\times H^1_0$ is established in
\cite{CGW} (under natural conditions on nonlinear terms, see also \cite{Ber}) and the
existence of an exponential attractor for that problem in the  space
$[H^3 \cap H^1_0] \times H^1_0$ can be found in \cite{DG}. However, even in that relatively
 simple 1D case, the well-posedness of the problem in the
class of {\it energy solutions} was problematic (if $\phi''$ is not
globally bounded).
\par
The multi-dimensional case is essentially less understood. Some
exception is only the case where the non-linearity
$\phi$ has globally bounded second derivative $\|\phi''(\eta)\|\le C$ (and the non-linearity
$f$ satisfies the standard growth restrictions). The typical example of such
non-linearity  is
\begin{equation}\label{n.kor}
\phi(\eta):=\sqrt{1+|\eta|^2}.
\end{equation}
At that case the global existence and uniqueness is
straightforward for any space dimension even in the class of
energy solutions. Moreover, rewriting equation \eqref{1.main} in
the form of
\begin{equation}\label{0.funny}
\Dt u(t)-\gamma\Dx u(t)=\Dt u(0)-\gamma\Dx u(0)+\int_0^t(\Nx\cdot\phi'(\Nx
u(s))+\Dx u(s)-f(u(s))+g)\,ds
\end{equation}
and treating it as a perturbation of a heat equation, one obtains
the control of a $W^{2,s}$-norm of the solution $u$ for some $s>n$ which is
sufficient to verify the global existence of classical (and even
more regular) solutions, see \cite{Cl2,IMM,KPS,Ry,Tv} and references
therein. Note however, that the trick with the
integro-differential equation \eqref{0.funny} works {\it only} for
globally bounded $\phi''$ and gives the estimates which grow
exponentially (or even doubly exponentially) in time which is not
very helpful for the attractor theory.
\par
In contrast to that, the well-posedness of problem \eqref{1.main}   for the case of {\it
growing} non-linearities $\phi$ (satisfying \eqref{1.int} with
$p>1$) has been previously known only for the two-dimensional
case. For instance, the case $\Omega=\R^2$ with the non-linearity
$$
\phi(\eta):=\Phi_1(\eta_1)+\Phi_2(\eta_2),
$$
where the functions $\phi_i:=\Phi_i'$ have at most cubic growth
is considered in \cite{Pe}, the case of spherically symmetric
nonlinearity
$$
\phi(\eta)=\tilde \phi(|\eta|^2)
$$
with an arbitrary polynomial  growth (in a bounded domain $\Omega\subset\R^2$)
is studied in \cite{En3} and the case of spherically symmetric
solutions in any space dimension is investigated in \cite{En1,En2}.
But again, the proof of uniqueness used there requires the $C^1$-regularity
of solutions whose  existence is based on the
analogue of the integro-differential equation \eqref{0.funny} and,
by this reason, the associated a priori estimates are
divergent in time.
\par
It also worth to mention here the theory of so-called small solutions
(which is usual for quasi-linear hyperbolic equations). In particular,
Kawashima and Shibata \cite{KS} proved global existence and
 stability  of {\it small} smooth solutions of the initial boundary
value problem for a quasilinear system of strongly damped wave
equations of the form
$$
A_0(U)u_{tt}+\sum_{j=1}^n
A_j(U)u_{x_jt}-\sum_{ij=1}^n A_{ij}(U)u_{x_ix_j}=\sum_{ij=1}^n
B_{ij}(U)u_{x_ix_jt}, \ \ U=(\nabla u,u_t,\nabla u_t)
$$
in any space dimension ($A_0(U)$, $A_j(U)$ and $A_{ij}(U)$ are assumed to satisfy some
natural assumptions near $U=0$; see also \cite{E}), the analogous results for less
regular solutions of the
equation
\begin{equation}\label{0.ex}
u_{tt}-\nabla_x\cdot\left(\frac{\nabla_x u}{\sqrt{1+|\nabla_x
u|^2}}\right)-\Delta u_t-|u|^{\alpha}u=0,
\end{equation}
where $\alpha $ is a given positive number are established in \cite{IMM}.
\par
Thus, to the best of our knowledge, the methods developed in the
previous papers were {\it insufficient} for developing the attractor
theory for the quasi-linear equation \eqref{1.main} in the
multi-dimensional case and, by that reason, this problem has not
been considered in the literature. However, a lot of related results
for the simplified versions of equation \eqref{1.main} can be
found:  we would like to mention here the papers
\cite{BHJPS,On,PB} devoted to global behavior of solutions of
the so-called Kirchhoff equation with strongly damping term (an equation of the form \eqref{1.main} with
$\phi(\Nx u):=\Phi(\|\Nx u\|_{L^2}^2)$) and the papers \cite{BGS,Pi} devoted to the
global behavior of solutions  for the
following operator differential  equation in a  Hilbert space
$\H$:
\begin{equation}\label{abstr2}
u_{tt}+A_1 u +A_2 u_t+ N^{*}g(Nu)=h(t),
\end{equation}
where $A_1,A_2$ and $N$ are given linear operators and $g(\cdot)$ is a
 nonlinear  operator. Note that, alhough our equation \eqref{1.main} can be considered as a particular case of
  \eqref{abstr2} with $A_1=A_2:=-\Dx$ and $N:=\Nx$, that choice of
  operators does not satisfy the assumptions of the aforementioned
  papers (in particular, the operator $A_2^{-1/2}N$ is not
  compact).
\par
In the present paper, we give a comprehensive study of equation
\eqref{1.main} in a 3D bounded domain with smooth boundary including
the well-posedness of weak energy solutions, asymptotic compactness, existence and finite-dimensionality
of global attractors, further regularity of solutions, etc.  In
particular, we present here a new method of verifying the
uniqueness of energy solutions which allows us not only to treat
the general case of equation \eqref{1.main}, but also essentially improve the
theory even in the semi-linear case $\phi\equiv0$ by removing the
long-standing growth restriction for the non-linearity $f$. In
order to avoid the additional technicalities, we pay the main
attention to the most complicated case of growing non-linearities $\phi$ (with $p\ge1$) and,
by this reason, we have written the additional
term $\Dx u$ in equation \eqref{1.main} which automatically
excludes the case of degenerate equations as well as the case of
the non-linearity \eqref{n.kor} and equation \eqref{0.ex} (the
most part of our results can be easily extended to the case of
equation \eqref{0.ex}, but the additional accuracy related with
the "non-coercivity" of the non-linearity \eqref{n.kor} is
required).

The paper is organized as follows.
\par
The well-posedness and dissipativity of weak energy solutions are
studied in Section \ref{s1}. We recall that the energy functional
for equation \eqref{1.main} reads
\begin{equation}\label{0.enf}
E(u,\Dt u):=\frac12\|\Dt u\|^2_{L^2}+\frac12\|\Nx
u\|^2_{L^2}+(\phi(\Nx u),1)+(F(u),1)-(g,u),
\end{equation}
where $F(v):=\int_0^vf(z)\,dz$ and $(\cdot,\cdot)$ stands for the
inner product in $L^2(\Omega)$ and, therefore, the natural choice
of the energy phase space is the following one:
\begin{equation}\label{0.ens}
\Cal E:=[W^{1,p+1}_0(\Omega)\cap L^{q+2}(\Omega)]\times L^2(\Omega).
\end{equation}
(here and below, we denote by $W^{s,p}(\Omega)$ the Sobolev space
of distributions whose derivative up to order $s$ belong to
$L^p(\Omega)$) and,
by definition, a weak energy solution of equation \eqref{1.main}
is a function
\begin{equation}\label{1.spaces}
u\in L^\infty([0,T],W^{1,p+1}_0(\Omega)\cap L^{q+2}(\Omega))\cap
W^{1,\infty}([0,T],L^2(\Omega))\cap W^{1,2}([0,T],W^{1,2}(\Omega))
\end{equation}
which satisfyies the equation in the sense of distributions. We note
that, as usual, the trajectory $t\to\xi_u(t):=(u(t),\Dt u(t))$ is
weakly continuous with respect to $t$ as the $\Cal E$-valued
function.
By this reason,  the initial data at $t=0$ is well-defined.
\par
The main result of this section is the following theorem.
\begin{theorem}\label{TH0.wp} Let the non-linearities $\phi$ and $f$ satisfy assumptions \eqref{1.int}
and \eqref{3.int} respectively. Then, for any $\xi_u(0)\in\Cal E$,
problem \eqref{1.main} has a unique weak energy solution
$\xi_u(t)$ and this solution satisfies the dissipative estimate:
\begin{equation}\label{0.dis}
\|\xi_u(t)\|_{\Cal E}^2+\int_t^{t+1}\|\Nx\Dt u(s)\|^2_{L^2}\,ds\le
Q(\|\xi_u(0)\|_{\Cal E}^2)e^{-\alpha t}+Q(\|g\|_{L^2})
\end{equation}
for some positive constant $\alpha$ and monotone function $Q$
independent of $u$ and $t$.
\end{theorem}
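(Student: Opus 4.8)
The plan is to establish the three assertions of Theorem~\ref{TH0.wp} --- existence, dissipativity, and uniqueness --- by treating them in a specific order, since the a~priori estimate underlying dissipativity is also what drives the existence proof, while uniqueness requires a genuinely new idea because of the growing nonlinearity $\phi$.

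First I would derive the basic energy estimate. Multiplying equation \eqref{1.main} by $\Dt u$ and integrating over $\Omega$, the term $-\gamma\Dt\Dx u$ produces the dissipation $\gamma\|\Nx\Dt u\|^2_{L^2}$, and the remaining terms assemble into $\frac{d}{dt}E(u,\Dt u)$ with $E$ as in \eqref{0.enf}; here one uses that $\frac{d}{dt}(\phi(\Nx u),1)=(\phi'(\Nx u),\Nx\Dt u)$ and $\frac{d}{dt}(F(u),1)=(f(u),\Dt u)$. This gives control of the energy but not yet exponential decay, because the energy dissipation involves only $\Dt u$. To upgrade to the dissipative estimate \eqref{0.dis} I would add a small multiple $\eps$ of the standard multiplier identity obtained by testing with $u$ itself: this produces a term $\|\Nx u\|^2_{L^2}$ together with the coercivity coming from the lower bounds in \eqref{1.int} and \eqref{3.int} (namely $(\phi'(\Nx u),\Nx u)\ge c\|\Nx u\|_{L^{p+1}}^{p+1}-C$ and $(f(u),u)\ge c\|u\|_{L^{q+2}}^{q+2}-C$), at the cost of controllable error terms $\eps\|\Dt u\|^2_{L^2}$ and $\eps\gamma(\Nx u,\Nx\Dt u)$. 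Forming the modified energy $E+\eps(u,\Dt u)+\frac{\eps\gamma}{2}\|\Nx u\|^2_{L^2}$ and choosing $\eps$ small yields a Gronwall-type differential inequality $\frac{d}{dt}\mathcal{E}+\alpha\mathcal{E}\le Q(\|g\|_{L^2})$, which integrates to \eqref{0.dis}; the integral term over $[t,t+1]$ for $\Nx\Dt u$ is recovered by integrating the energy identity on that interval.

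For existence I would use a Galerkin scheme on the eigenbasis of $-\Dx$: the finite-dimensional approximations satisfy the same energy estimates uniformly in the dimension, so they are bounded in the spaces listed in \eqref{1.spaces}. Passing to the limit is routine for the linear terms and for $f(u)$ and $\phi'(\Nx u)$ one extracts a~priori bounds in dual spaces ($f(u)$ bounded in $L^{(q+2)/(q+1)}$ and $\phi'(\Nx u)$ in $L^{(p+1)/p}$ by the upper growth bounds in \eqref{1.int}, \eqref{3.int}) and uses the Aubin--Lions compactness lemma --- the crucial gain here is the regularization $W^{1,2}([0,T],W^{1,2})$ coming from the $\gamma\Dt\Dx u$ term, which gives strong convergence of $\Nx u$ almost everywhere and hence convergence of the nonlinear terms via the standard monotonicity/Minty argument for the monotone operator $\Nx\cdot\phi'(\Nx u)$.

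The main obstacle, and where I expect the new method of the paper to enter, is \textbf{uniqueness}. The classical energy method for the difference $w=u_1-u_2$ of two solutions fails because estimating the difference $\phi'(\Nx u_1)-\phi'(\Nx u_2)$ requires $L^\infty$-control of $\Nx u_i$, which energy solutions do not possess when $p>1$. My plan would be to take the difference of the two equations, test it not with $\Dt w$ but with a cleverly chosen auxiliary function --- for instance, applying the operator $(-\Dx)^{-1}$ or integrating in time to exploit the strong damping, so that the troublesome nonlinear difference is paired against a more regular quantity and the monotonicity $(\phi'(\eta_1)-\phi'(\eta_2))\cdot(\eta_1-\eta_2)\ge0$ can be used to absorb it with the correct sign rather than estimating it directly. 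The delicate point is choosing the multiplier and the associated energy functional for $w$ so that the dissipation from $\gamma\Dt\Dx w$ dominates all the error terms and a Gronwall argument closes; I anticipate this is the technical heart of the section and the reason the long-standing growth restrictions can be removed.
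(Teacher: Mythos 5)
Your treatment of existence and dissipativity coincides with the paper's own (multipliers $\Dt u$ and $\alpha u$, the modified energy $E+\alpha(u,\Dt u)+\frac{\alpha\gamma}{2}\|\Nx u\|^2_{L^2}$, Gronwall, then Galerkin with monotonicity for the limit passage), and that part is fine. The genuine gap is in uniqueness, which you leave as a plan, and the plan as stated would fail. Testing the difference equation for $v=u_1-u_2$ with $(-\Dx)^{-1}\Dt v$ (or with a time integral of the equation) pairs the quasi-linear difference against $\Nx(-\Dx)^{-1}\Dt v$, \emph{not} against $\Nx v$, so the monotonicity of $\phi'$ provides no sign there and cannot ``absorb'' anything; conversely, testing with $v$ alone does exploit monotonicity but produces the uncontrolled term $-\|\Dt v\|^2_{L^2}$ from $(\Dt^2 v,v)$, while the strong damping contributes only $\frac{\gamma}{2}\frac{d}{dt}\|\Nx v\|^2_{L^2}$, i.e.\ no dissipation in $\Dt v$. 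Neither multiplier closes on its own, and no single ``cleverly chosen'' test function with sign-monotonicity alone will do it.

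The paper's actual mechanism, which your sketch does not identify, is the \emph{combination} of the two multipliers $v+\frac{2}{\gamma}(-\Dx)^{-1}\Dt v$ together with a quantitative strengthening of monotonicity (Lemma \ref{Da}):
$$
[\phi'(\eta_1)-\phi'(\eta_2)]\cdot[\eta_1-\eta_2]\ \ge\ \delta\left(|\eta_1|+|\eta_2|\right)^{p-1}|\eta_1-\eta_2|^2 .
$$
The $v$-multiplier then yields the weighted coercive terms $((|\Nx u_1|+|\Nx u_2|)^{p-1},|\Nx v|^2)$ and $((|u_1|+|u_2|)^{q},|v|^2)$, and the nonlinear differences produced by the $(-\Dx)^{-1}\Dt v$-multiplier are estimated \emph{directly} --- the opposite of your ``rather than estimating it directly'' --- via H\"older inequalities with precisely these weights, so that they are absorbed into the coercive terms plus $\eb\|\Dt v\|^2_{L^2}$ (see \eqref{1.fest}, \eqref{1.phiest}). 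This is also exactly where the hypothesis $p<5$ enters, through the interpolation $\|\Nx(-\Dx)^{-1}\Dt v\|^2_{L^{p+1}}\le \eb\|\Dt v\|^2_{L^2}+C_\eb\|\Dt v\|^2_{H^{-1}}$, valid since $p+1<6$ and $H^1\subset L^6$ in 3D; your proposal never locates where the growth restriction on $\phi$ is used, yet no complete proof can avoid it. The $f$-term with arbitrary $q$ is handled by the same scheme using $(-\Dx)^{-1}\Dt v\in W^{2,2-s}(\Omega)\subset C(\Omega)$ and absorption into $((|u_1|+|u_2|)^q,|v|^2)$, and the output is Lipschitz continuity of the difference only in the weaker norm $H^1_0\times H^{-1}$, estimate \eqref{1.dif}, which suffices for uniqueness.
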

In a slight abuse of notations, we
denote by $\|\xi_u(t)\|_{\Cal E}$ the following energy "norm" in
the space $\Cal E$:
$$
\|\xi_u(t)\|_{\Cal E}^2:=\|\Dt u(t)\|^2_{L^2}+\|\Nx
u(t)\|^{p+1}_{L^{p+1}}+\|u(t)\|^{q+2}_{L^{q+2}}.
$$
Moreover, a number of important additional results, including the
Lipschitz continuity with respect to the initial data (in a
slightly weaker norm) and a partial smoothing property for the
$\Dt u$-component of the solution $\xi_u(t)$ are obtained there.
\par
The existence of a global attractor $\Cal A$ for the semigroup
$S(t)$ associated with weak energy solutions of \eqref{1.main} is
verified in Section \ref{s.atr} using the so-called method of
$l$-trajectories. The  attractor $\Cal A$ attracts the images of bounded
subsets in $\Cal E$ only in the topology of slightly weaker space
$\tilde{\Cal E}:=W^{1,2}_0(\Omega)\times L^2(\Omega)$ and has
finite fractal dimension in $\tilde{\Cal E}$. This drawback is
partially corrected below (in Section \ref{s4}) where the
attraction property in the initial phase space is verified for
the semi-linear case $\phi\equiv0$ (and arbitrarily growing
non-linearity $f$).
\par
The so-called {\it strong} solutions of problem \eqref{1.main} are
considered in Section \ref{s2}. By definition, that solutions
belong to the space
\begin{equation}\label{1.e1energy1}
\Cal E_1:=\{u\in W^{2,2}(\Omega)\cap W^{1,2}_0(\Omega),\, \Dt u\in
W^{1,2}_0(\Omega), \ \Nx(\phi'(\Nx u))\in W^{-1,2}(\Omega)\},
\end{equation}
for every $t\ge0$ (see Remark \ref{Rem3.space} below for more information
 about the structure of this space) and the "norm" in that space
 is naturally defined by
$$
\|\xi_u(t)\|_{\Cal E_1}^2:=\|\Dt
u(t)\|^2_{H^1}+\|u(t)\|_{W^{2,2}}^2+\|\Nx(\phi'(\Nx
u(t))\|^2_{W^{-1,2}}.
$$
The main result of this section is the
following theorem.
\begin{theorem}\label{Th0.s} Let the assumptions of Theorem
\ref{TH0.wp} hold and let, in addition, $\xi_u(0)\in\Cal E_1$.
Then the associated weak energy solution of \eqref{1.main} is, in
a fact, a strong solution and the following estimate holds:
\begin{equation}\label{0.sdis}
\|\xi_u(t)\|_{\Cal E_1}+\int_t^{t+1}\|\Dt^2 u(s)\|^2_{L^2}\,ds\le
Q(\|\xi_u(0)\|_{\Cal E_1})e^{-\alpha t}+Q(\|g\|_{L^2})
\end{equation}
for some positive constant $\alpha$ and monotone function $Q$
independent of $t$.
\end{theorem}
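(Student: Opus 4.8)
The plan is to establish higher regularity by differentiating the equation in time and performing energy estimates on the resulting linear (but nonautonomous) equation for the velocity $v:=\Dt u$, using the already-proven energy-level dissipative estimate \eqref{0.dis} as the background control. I would start from the weak energy solution given by Theorem \ref{TH0.wp} and formally differentiate \eqref{1.main} with respect to $t$ to obtain
\begin{equation}\label{0.diff}
\Dt^2 v-\gamma\Dt\Dx v-\Dx v+f'(u)v=\Nx\cdot(\phi''(\Nx u)\Nx v),
\end{equation}
which is a strongly damped wave equation for $v$ with time-dependent coefficients controlled by the solution $u$. The natural multiplier is $\Dt v=\Dt^2 u$ together with $v$ itself, producing the basic $\Cal E_1$-energy $\frac12\|\Dt v\|^2_{L^2}+\frac12\|\Nx v\|^2_{L^2}$. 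The damping term $\gamma\Dt\Dx v$ yields the coercive dissipation $\gamma\|\Nx\Dt v\|^2_{L^2}$, which is exactly what will eventually give the $\int_t^{t+1}\|\Dt^2u\|^2_{L^2}$ integral in \eqref{0.sdis} once the $H^1$-norm of $\Dt v$ is accounted for via Poincar\'e.

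The first genuine step is to control the two nonlinear terms after testing with $\Dt v$. For the semilinear part I would write $(f'(u)v,\Dt v)$ and integrate by parts in time, converting it into $\frac{d}{dt}$ of a quadratic-in-$v$ quantity plus a remainder $(f''(u)(\Dt u)v,v)$; the monotonicity lower bound $f'(s)\ge -C$ from \eqref{3.int} lets me absorb the bad sign into the energy, while the remainder is handled using the partial smoothing of $\Dt u$ and the Lipschitz-type estimates that Theorem \ref{TH0.wp} promises. For the quasilinear part, $(\Nx\cdot(\phi''(\Nx u)\Nx v),\Dt v)=-(\phi''(\Nx u)\Nx v,\Nx\Dt v)$, and since $\phi''\ge 0$ by \eqref{2.int} I would split off $\frac12\frac{d}{dt}(\phi''(\Nx u)\Nx v,\Nx v)$, leaving the commutator term $\frac12(\Dt[\phi''(\Nx u)]\Nx v,\Nx v)$, which involves $\phi'''$ and the factor $\Nx\Dt u$. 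This commutator is the crux: using \eqref{2.int} one bounds $|\Dt[\phi''(\Nx u)]|\le C(1+|\Nx u|^{p-2})|\Nx\Dt u|$, and since $p<5$ one can close the estimate by Sobolev embeddings in $3$D (here $W^{1,2}\hookrightarrow L^6$ and the control of $\int_t^{t+1}\|\Nx\Dt u\|^2_{L^2}$ from \eqref{0.dis}) together with a small-constant absorption into the coercive $\gamma\|\Nx\Dt v\|^2$ term. I expect this commutator control to be the main obstacle, precisely because it must exploit the subcriticality $p<5$ and the smoothing regularity of $\Dt u$ simultaneously.

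Once the differential inequality
\begin{equation}\label{0.diffin}
\frac{d}{dt}\Lambda(t)+\alpha\Lambda(t)+\gamma\|\Nx\Dt v(t)\|^2_{L^2}\le h(t)
\end{equation}
is obtained, where $\Lambda(t)$ is equivalent to $\|\Dt v\|^2_{L^2}+\|\Nx v\|^2_{L^2}+(\phi''(\Nx u)\Nx v,\Nx v)$ and $h(t)$ is integrable with the controlled growth coming from \eqref{0.dis}, I would apply the Gronwall inequality to deduce exponential decay of $\Lambda$ plus the integrated control of $\|\Nx\Dt v\|^2$. Integrating \eqref{0.diffin} over $[t,t+1]$ then yields the $\int_t^{t+1}\|\Dt^2u\|^2_{L^2}$ term of \eqref{0.sdis}. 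Finally, to recover the full $\Cal E_1$-norm I must convert the differential-inequality control of $(v,\Dt v)$ into control of $\|u\|_{W^{2,2}}$ and $\|\Nx(\phi'(\Nx u))\|_{W^{-1,2}}$; this is done by reading \eqref{1.main} as an elliptic equation for $u$ at fixed $t$, namely $-\Dx u+f(u)=\Nx\cdot\phi'(\Nx u)+g-\Dt^2u+\gamma\Dt\Dx u$, and invoking elliptic regularity with the right-hand side now bounded in $L^2$ (respectively $W^{-1,2}$) by the quantities just estimated. A standard approximation/Galerkin argument justifies that these formal manipulations are legitimate and that the energy solution is in fact the strong solution asserted.
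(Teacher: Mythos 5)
Your overall architecture (estimate $v=\Dt u$ via the time-differentiated equation, then recover the remaining components of the $\Cal E_1$-norm from the equation) matches the paper's, but the multiplier you choose for the $v$-equation breaks the argument at exactly the step you call the crux. Testing with $\Dt v$ forces you to integrate the quasilinear term by parts in time, and the resulting commutator $\frac12(\Dt[\phi''(\Nx u)]\Nx v,\Nx v)$ is not controllable at this level. First, it involves $\phi'''$, while the paper assumes only $\phi\in C^2$ with the two-sided bounds \eqref{1.int} on $\phi''$; no hypothesis yields your bound $|\Dt[\phi''(\Nx u)]|\le C(1+|\Nx u|^{p-2})|\Nx\Dt u|$. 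Second, even granting such a bound, since $\Nx v=\Nx\Dt u$ the commutator is of size $\int_\Omega(1+|\Nx u|^{p-2})|\Nx\Dt u|^3\,dx$, i.e.\ \emph{cubic} in $\Nx\Dt u$; the dissipation your multiplier produces is $\gamma\|\Nx\Dt v\|^2_{L^2}=\gamma\|\Nx\Dt^2u\|^2_{L^2}$, which gives no spatial integrability of $\Nx v$ beyond $L^2$, and the needed $\|\Nx v\|_{L^3}^3\lesssim\|\Nx v\|_{L^2}^{3/2}\|v\|_{H^2}^{3/2}$ requires exactly the $H^2$-regularity of $\Dt u$ that neither \eqref{0.dis} nor your absorption scheme supplies. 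This is precisely the obstruction the paper itself records in Remark \ref{Rem.problem} as the reason higher-energy multipliers fail in 3D. The paper's proof (Proposition \ref{Prop1.dtreg}) avoids the commutator entirely: it tests the $v$-equation with the weak multiplier $v+\frac2\gamma(-\Dx)^{-1}\Dt v$ borrowed from the uniqueness proof, so the quasilinear term meets $v$ with the good sign $(\phi''(\Nx u)\Nx v,\Nx v)\ge0$ with no time-differentiation of the coefficient, while the restriction $p<5$ enters only in estimating the pairing with $\Nx(-\Dx)^{-1}\Dt v$ as in \eqref{1.phiest}; the term $\int_t^{t+1}\|\Dt^2u\|^2_{L^2}$ then comes from the $(-\Dx)^{-1}\Dt v$ multiplier, and dissipativity (rather than $e^{Kt}$-growing Gronwall constants) is rescued by the interpolation \eqref{1.interp} against the $H^{-3}$ bound \eqref{1.dt2} read off from the equation---a step your sketch also omits.

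The closing step has a second genuine gap: you cannot obtain the $\|u\|_{W^{2,2}}$ and $\|\Nx\phi'(\Nx u)\|_{W^{-1,2}}$ parts of the $\Cal E_1$-norm by ``reading \eqref{1.main} as an elliptic equation and invoking elliptic regularity,'' because the elliptic operator $u\mapsto-\Dx u-\Nx\cdot\phi'(\Nx u)$ is quasilinear with degenerate and unbounded coefficients; the term $\Nx\cdot\phi'(\Nx u)$ cannot be moved to the right-hand side (it is not a priori in $L^2$, since the energy does not control $\Nx u$ in $L^{2p}$), and the paper notes in Remark \ref{Rem3.space} that the relevant maximal-regularity estimate \eqref{3.elreg} is open for general $\phi$ satisfying \eqref{1.int}. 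The paper instead proves the $H^2$ bound directly as a parabolic-type estimate (Lemma \ref{Lem3.h2}): multiplication by $\Nx(\theta\Nx u)$ for the interior, tangential multipliers $\partial_{\tau}^*\partial_{\tau}u$ near the boundary, and recovery of $\partial_n^2u$ from the uniform ellipticity of $\phi''$---justified not by Galerkin approximation (which the paper explicitly says is not applicable for these multipliers) but by approximating $\phi$ with nonlinearities $\phi_n$ having globally bounded second derivatives and passing to the limit via almost-everywhere convergence and the Fatou lemma (Theorem \ref{Th3.strong}). Only the very last move of your plan---reading the $H^{-1}$ bound on $\Nx\phi'(\Nx u)$ off the equation from the $H^{-1}$ control of $\Dt^2u$---matches the paper (Corollary \ref{Cor3.strong}).
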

The proof of that theorem is based on a combination of our new
"uniqueness technique" (which is now used for obtaining the proper
estimates for $\Dt u$) with the
technique of \cite{En2} extended to the spherically non-symmetric
case (which allows us to estimate the $W^{2,2}$-norm of the
solution). As usual, that $W^{2,2}$-estimate is based on the
multiplication of the equation \eqref{1.main} by $\Dx u$ (and
works without any growth restrictions on $p$), however, in
contrast to \cite{En2}, we would prefer to use some kind of
non-linear localization technique rather than direct estimates of
the boundary terms arising after the integration by parts in the
term $(\Nx\cdot\phi'(\Nx u),\Dx u)$, see Lemma \ref{Lem3.h2} for
details.
\par
In addition, the well-posedness and dissipativity of problem
\eqref{1.main} for the {\it critical} growth exponent $p=4$ for
the non-linearity $\phi$ in the class of {\it strong} solutions
are verified in that section.
\par
However, we have to emphasize that the strong solutions are a
priori not regular enough in order to prevent possible
singularities of the gradient $\Nx u$ and to prove that $\Nx u\in
C$, so, the global existence and dissipativity of {\it classical}
(and more regular) solutions remains an open problem here.
\par
The asymptotic regularity of weak energy solutions and the
existence of an {\it exponential} attractor of the semigroup
$S(t)$ associated with weak energy solutions are verified in
Section \ref{s4}. In particular, using the extension of the
technique, developed in \cite{PZ} to the quasi-linear case,
we establish that the proper ball $\Bbb B$ in the space $\Cal E_1$ is an
exponentially attracting set for the semigroup $S(t)$, i.e., that,
for any bounded set $B\subset \Bbb B$,
\begin{equation}\label{0.exp}
\dist_{\tilde{\Cal E}}(S(t)B,\Bbb B)\le
 Q(\|B\|_{\Cal E})e^{-\alpha t},
\end{equation}
where the positive constant $\alpha$ and monotone function $Q$ are
independent of $B$.
\par
This result immediately implies that the above mentioned global
attractor $\Cal A$ consists of {\it strong} solutions and together
with the results of Section \ref{s.atr} gives the existence of an
exponential attractor $\Cal M$ for the solution semigroup.
\par
In addition, in the particular semilinear case $\phi\equiv0$, we
prove the analogue of estimate \eqref{0.exp} for the initial phase
space $\Cal E$ which gives the attraction property in the topology
of the initial phase space as well. Thus, we have constructed the
complete theory for the semi-linear case with arbitrary polynomial
growth rate of the non-linearity $f$.
\par
Finally, in the concluding Section \ref{s6}, we discuss the
applications of our technique to the related, but more simple
equations, including the so-called Kirchhof equation, membrane
equation, and the semilinear wave equation with structural damping.
\par
It is also worth to note that, although only the case of a bounded
domain $\Omega$ is considered in the paper, the result can be
extended to the case of unbounded domains $\Omega$. We will return
to that question somewhere else.

\section{Energy solutions: existence, uniqueness and
dissipativity}\label{s1}

In this section, we start to study  weak energy solutions of problem
\eqref{1.main}. The following standard theorem gives the solvability
and dissipativity of the problem \eqref{1.main}.
\begin{theorem}\label{Th1.dis} Let the nonlinearities $\phi$ and $f$ satisfy  assumptions \eqref{1.int} and
\eqref{3.int}, $g\in L^2(\Omega)$  and $\xi_u(0)\in\Cal E$. Then, problem \eqref{1.main} possesses at
least one weak energy solution $\xi_u(t)$ which satisfies the
following dissipative estimate:
\begin{equation}\label{1.energy}
\|\xi_u(t)\|_{\Cal E}^2+\int_t^{t+1}\|\Dt\Nx u(s)\|_{L^2}^2\,ds\le
Q(\|\xi_u(0)\|_{\Cal E}^2)e^{-\beta t}+Q(\|g\|_{L^2}),
\end{equation}
where $\beta$ is a positive constant and $Q$ is a monotone function
both independent of the initial data $\xi_u(0)$.
\end{theorem}
\begin{proof} Since the result of the theorem is more or less standard,
we restrict ourselves by only formal derivation of
estimate \eqref{1.energy} which can be easily justified using,
e.g., the Galerkin approximation method and the monotonicity
arguments for passing to the limit in the quasi-linear term, see \cite{BV} for details.
\par
Indeed, multiplying equation \eqref{1.main} by $\Dt u$ and
integrating over $x\in\Omega$, we arrive at
\begin{equation}\label{1.ener}
\frac d{dt}\(\frac12 \|\Dt u\|^2_{L^2}+(\phi(\Nx
u),1)+\frac12\|\Nx u\|^2_{L^2}+(F(u),1)-(g,u)\)+\gamma\|\Nx\Dt
u\|^2_{L^2}=0.
\end{equation}
Here $F(u):=\int_0^uf(v)\,dv$ and $(\cdot,\cdot)$ stands for the
usual inner product in $L^2(\Omega)$.
\par
Multiplying  equation \eqref{1.main} by $\alpha u$, where
$\alpha$ is a small positive number which will be fixed below,
we obtain
\begin{multline}\label{1.en1}
\frac d{dt}(\alpha\frac\gamma2\|\Nx u\|^2_{L^2}+\alpha(\Dt
u,u))-\alpha\|\Dt u\|^2+\\+\alpha\|\Nx u\|^2_{L^2}+\alpha(\phi'(\Nx
u),\Nx u)+\alpha(f(u),u)=\alpha(g,u).
\end{multline}
Taking a sum of that two equations and denoting
$$
E(\xi_u(t))\!:=\!\frac12 \|\Dt u\|^2_{L^2}+(\phi(\Nx
u),1)+\frac12\|\Nx u\|^2_{L^2}+(F(u),1)-(g,u)+\alpha\frac\gamma2\|\Nx u\|^2_{L^2}+\alpha(\Dt
u,u),
$$
we deduce that
$$
\frac d{dt}E(\xi_u(t))+\gamma\|\Nx\Dt
u\|^2_{L^2}-\alpha\|\Dt u\|^2_{L^2}+\alpha\|\Nx u\|^2_{L^2}+\alpha(\phi'(\Nx
u),\Nx u)+\alpha(f(u),u)=\alpha(g,u).
$$
We  fix $\alpha$ be small enough that
\begin{multline}\label{1.esten}
\beta(\|\Dt u\|^2_{L^2}+\|\Nx
u\|^{p+1}_{L^{p+1}}+\|u\|^{q+1}_{L^{q+1}})-C(1+\|g\|^2_{L^2})\le\\\le
E(\xi_u(t))\le C(1+\|g\|^2_{L^2}+\|\Dt u\|^2_{L^2}+\|\Nx
u\|^{p+1}_{L^{p+1}}+\|u\|^{q+1}_{L^{q+1}})
\end{multline}
and that $\|\Dt\Nx u\|^2_{L^2}\ge 2\alpha\|\Dt u\|^2_{L^2}$ (it is possible to do due
 to our assumptions on $\phi$ and $f$, see \eqref{1.int} and \eqref{3.int}). Then,
the last inequality gives
$$
\frac d{dt}E(\xi_u(t))+\delta E(\xi_u(t))+\delta\|\Dt\Nx u\|^2_{L^2}\le C(1+\|g\|^2_{L^2})
$$
for some positive constants $\delta$ and $C$. Applying the
Gronwall inequality to this estimate and using \eqref{1.esten}, we arrive at the
desired estimate \eqref{1.energy} and finish the proof of the
theorem.
\end{proof}
Note that the growth restriction $p<5$ is nowhere used in the
proof of Theorem \ref{Th1.dis} and, therefore, that result holds
for arbitrary growth $p$. In contrast to that, the next uniqueness
result essentially uses this growth restriction.

\begin{theorem}\label{Th1.uniq} Let the assumptions of Theorem
\ref{Th1.dis} hold. Then, the energy solution of problem
\eqref{1.main} is unique. Moreover, for every two energy solutions
$u_1(t)$ and $u_2(t)$
(with different initial data), the following Lipschitz continuity
in a weaker space holds:
\begin{equation}\label{1.dif}
\|\Dt v(t)\|^2_{H^{-1}}+\|v(t)\|^2_{H^1}\le C e^{Kt}(\|\Dt
v(0)\|^2_{H^{-1}}+\|v(0)\|^2_{H^1}),
\end{equation}
where $v(t):=u_1(t)-u_2(t)$ and the constants $C$ and $K$ depend
only on the $\Cal E$-norms of the initial data.
\end{theorem}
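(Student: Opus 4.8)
The plan is to derive an energy inequality for the difference $v = u_1 - u_2$ and close it via Gronwall. Subtracting the two copies of \eqref{1.main} gives
\begin{equation}\label{plan.diff}
\Dt^2 v - \gamma\Dt\Dx v - \Dx v + [f(u_1)-f(u_2)] = \Nx\cdot[\phi'(\Nx u_1)-\phi'(\Nx u_2)].
\end{equation}
The Lipschitz estimate \eqref{1.dif} is stated in the weaker norm $\|\Dt v\|_{H^{-1}}^2 + \|v\|_{H^1}^2$, so the natural test function is not $\Dt v$ itself but rather $(-\Dx)^{-1}\Dt v$, i.e.\ I would apply $(-\Dx)^{-1}$ to \eqref{plan.diff} and pair with $\Dt v$ in $L^2$. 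First I would note that this choice turns the principal linear part into
$$
\frac{d}{dt}\Big(\tfrac12\|\Dt v\|_{H^{-1}}^2 + \tfrac12\|v\|_{L^2}^2\Big) + \gamma\|\Dt v\|_{L^2}^2,
$$
since $(-\Dx)^{-1}\ptt v$ paired with $\Dt v$ produces the time derivative of $\tfrac12\|\Dt v\|_{H^{-1}}^2$, the $-\gamma\Dt\Dx v$ term becomes $\gamma\|\Dt v\|_{L^2}^2$, and $-\Dx v$ becomes $\tfrac{d}{dt}\tfrac12\|v\|_{L^2}^2$. The strong damping $\gamma\|\Dt v\|_{L^2}^2$ on the left is the crucial coercive quantity that must absorb the nonlinear remainders.

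The two nonlinear terms must then be controlled by $\gamma\|\Dt v\|_{L^2}^2$ plus lower-order terms in the energy $\|\Dt v\|_{H^{-1}}^2 + \|v\|_{H^1}^2$. For the quasi-linear term I would write $\phi'(\Nx u_1)-\phi'(\Nx u_2) = \big(\int_0^1\phi''(\Nx u_2 + \theta\Nx v)\,d\theta\big)\Nx v$, so after integrating by parts against $(-\Dx)^{-1}\Dt v$ the contribution is essentially $\big(\phi''(\cdots)\Nx v, \Nx(-\Dx)^{-1}\Dt v\big)$; using the upper bound $\phi''(\eta)\le a_1(1+|\eta|^{p-1})$ from \eqref{1.int} together with H\"older's inequality and the Sobolev embeddings available in $3$D (where $p<5$ enters decisively), one estimates this by $C\|\Nx v\|_{?}\|\Dt v\|_{L^2}$ with an exponent bookkeeping that forces $p<5$, and then Young's inequality splits off a small multiple of $\|\Dt v\|_{L^2}^2$. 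The term $f(u_1)-f(u_2) = \big(\int_0^1 f'(u_2+\theta v)\,d\theta\big)v$ paired with $(-\Dx)^{-1}\Dt v$ is handled analogously, using the upper growth bound in \eqref{3.int}; here the arbitrary exponent $q$ is tolerable precisely because $v$ appears linearly and the high powers land on $u_1,u_2$, whose $\Cal E$-norms are controlled by \eqref{1.energy}.

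The main obstacle I anticipate is the quasi-linear term: the coefficient $\phi''(\Nx u_2+\theta\Nx v)$ is only in $L^{(p+1)/(p-1)}$ (by the dissipative bound on $\Nx u_i$ in $L^{p+1}$), and one must show the trilinear form $\big(\phi''\Nx v, \Nx(-\Dx)^{-1}\Dt v\big)$ can be bounded by $\epsilon\|\Dt v\|_{L^2}^2$ plus $C\|v\|_{H^1}^2$ with constants depending only on the energy norms of the data. This is exactly where the restriction $p<5$ is needed, since the available Sobolev exponents in $3$D must close the H\"older triple. Once both nonlinear terms are dominated, the strong damping absorbs the $\|\Dt v\|_{L^2}^2$ pieces and what remains is
$$
\frac{d}{dt}\Big(\|\Dt v\|_{H^{-1}}^2 + \|v\|_{L^2}^2\Big) \le K\big(\|\Dt v\|_{H^{-1}}^2 + \|v\|_{H^1}^2\big),
$$
with $K$ depending on the $\Cal E$-norms of the data through \eqref{1.energy}; after controlling $\|v\|_{H^1}^2$ in terms of the same energy (the $\tfrac12\|v\|_{L^2}^2$ plus an independent bound on $\|\Nx v\|_{L^2}^2$, which one obtains by also retaining the $\|v\|_{H^1}^2$ contribution from the $-\Dx v$ test), Gronwall yields \eqref{1.dif}, and uniqueness follows by taking identical initial data.
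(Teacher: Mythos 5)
Your single-multiplier plan has a genuine gap: testing only with $(-\Dx)^{-1}\Dt v$ cannot close either nonlinear term with the ingredients you invoke. For the quasi-linear term, the coefficient $\int_0^1\phi''(\Nx u_2+\theta\Nx v)\,d\theta$ is controlled merely in $L^{(p+1)/(p-1)}$ (since the energy bound gives only $\Nx u_i\in L^{p+1}$), while $\Nx v$ lies only in $L^2$ and $\Nx(-\Dx)^{-1}\Dt v$ at best in $L^{p+1}$; the H\"older exponents add up to $\frac{p-1}{p+1}+\frac12+\frac1{p+1}=\frac{p}{p+1}+\frac12>1$ for every $p>1$, so no Sobolev bookkeeping yields the bound $\eb\|\Dt v\|_{L^2}^2+C\|v\|_{H^1}^2$ that you posit. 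The restriction $p<5$ enters only through the interpolation $\|\Nx(-\Dx)^{-1}\Dt v\|^2_{L^{p+1}}\le\eb\|\Dt v\|^2_{L^2}+C_\eb\|\Dt v\|^2_{H^{-1}}$ and does not repair the remaining factor. Similarly, your claim that arbitrary $q$ is tolerable "because $v$ appears linearly" is exactly the reasoning that historically capped the growth exponent: bounding $\||u_i|^q|v|\|_{L^1}$ by $C\|v\|_{H^1}$ requires $|u_i|^q\in L^{6/5}$, i.e.\ $u_i\in L^{6q/5}$, which $u_i\in L^{q+2}$ supplies only for $q\le10$; for larger $q$ one can at best interpolate against the bounded $L^{q+2}$-norm of $v$, which destroys the Lipschitz (Gronwall) structure of \eqref{1.dif}. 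So your scheme does not prove the theorem as stated for general $p\in[1,5)$ and arbitrary $q$.

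The missing idea --- and the paper's central novelty --- is a \emph{second} multiplier. The paper also multiplies the difference equation by $v$ itself (estimate \eqref{1.est}), where the strong monotonicity of the nonlinearities (Lemma \ref{Da}, i.e.\ inequality \eqref{4.int}, together with $f'(s)\ge-C+a|s|^q$) produces the \emph{weighted} coercive terms $((|\Nx u_1|+|\Nx u_2|)^{p-1},|\Nx v|^2)$ and $((|u_1|+|u_2|)^q,|v|^2)$ on the good side of the inequality. The dangerous contributions from the $(-\Dx)^{-1}\Dt v$ multiplication are then estimated, via a weighted Cauchy--Schwarz as in \eqref{1.fest} and \eqref{1.phiest}, precisely by $\eb$ times these same weighted quantities (plus $\eb\|\Dt v\|^2_{L^2}+C_\eb\|\Dt v\|^2_{H^{-1}}$), so they are \emph{absorbed} rather than bounded by $\|v\|_{H^1}^2$ --- this is what removes the growth restrictions. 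The multiplication by $v$ also supplies, through the strong damping term, the missing $\frac d{dt}\frac\gamma2\|\Nx v\|^2_{L^2}$ in the Gronwall functional \eqref{1.e-1}; your aside about "retaining the $\|v\|_{H^1}^2$ contribution from the $-\Dx v$ test" cannot work, since $-\Dx v$ paired with $(-\Dx)^{-1}\Dt v$ yields only $\frac d{dt}\frac12\|v\|^2_{L^2}$. Without the $v$-multiplier and the monotonicity absorption, the differential inequality \eqref{1.dlip} is unobtainable for $p>1$ or large $q$.
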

\begin{proof}
Indeed, the function $v(t)$ solves
\begin{equation}\label{1.difer}
\begin{cases}
\Dt^2 v-\gamma\Dt \Dx v-\Dx v+[f(u_1)-f(u_2)]=\Nx(\phi'(\Nx
u_1)-\phi'(\Nx u_2)),\\
v\big|_{\partial\Omega}=0, \ \ v(0)=u_1(0)-u_2(0), \ \Dt v(0))=\Dt
u_1(0)-\Dt u_2(0).
\end{cases}
\end{equation}
In order to estimate the non-linear terms in that equation, we
need the following lemma.
\begin{lemma}\label{Da} (see for instance \cite{Da} or \cite{Z1}) If the function $\phi$ satisfies conditions
\eqref{1.int}  then there exist a constant
$\delta>0 $ such that
\begin{equation}\label{4.int}
[\phi'(\eta_1)-\phi'(\eta_2)]\cdot [\eta_1-\eta_2] \geq
\delta\left(|\eta_1|+|\eta_2|\right)^{p-1}|\eta_1-\eta_2|^2, \ \forall
\eta_1,\eta_2 \in \R^3.
\end{equation}
\end{lemma}
Thus, due to our assumptions on $f$ and Lemma \ref{Da},
$$
[f(u_1)-f(u_2)].(u_1-u_2)\ge -C|v|^2+d_0(|u_1|^{q}+|u_2|^{q})|v|^2,
$$
for some positive $d_0$.
Analogously, for the function $\phi'$, we have
$$
\left(\phi'(\Nx u_1)-\phi'(\Nx u_2),\Nx v\right)\geq
d_0((|u_1|+|u_2|)^{p-1},|v|^2).
$$
Multiplying  equation \eqref{1.difer} by $v$, integrating over $x\in\Omega$ (it is not difficult to check that
all of the integrals have sense; as usual, the regularity of the energy solution is sufficient for multiplication
of the equation by $u$, but not for $\Dt u$) and using the above estimates for
the functions $f$ and $\phi$, we will have
\begin{multline}\label{1.est}
\Dt\left[\frac{\gamma}2\|\Nx v\|_{L^2}^2+(v,\Dt
v)\right]+d_0((|\Nx u_1|+|\Nx u_2|)^{p-1}+1,|\Nx
v|^2)+\\+d_0((|u_1|+|u_2|)^q,|v|^2)\le C\|v\|_{L^2}^2+\|\Dt
v\|^2_{L^2}
\end{multline}
for some positive $d_0$.
\par
In order to control the term with $\Dt v$ in the right-hand side
of \eqref{1.est}, we multiply equation \eqref{1.difer} by
$(-\Dx)^{-1}\Dt v$ and integrate over $x\in\Omega$ (again, all
 of the integrals will have sense). Then, we have
\begin{multline}\label{1.est1}
\Dt(\|\Dt v\|^2_{H^{-1}}+\|v\|_{L^2}^2)+2\gamma \|\Dt
v\|^2_{L^2}\le\\\le 2(|f(u_1)-f(u_2)|,|(-\Dx)^{-1}\Dt v|)+
2(|\phi'(\Nx u_1)-\phi'(\Nx u_2)|,|\Nx(-\Dx)^{-1}\Dt v|).
\end{multline}
Using the growth assumptions on $f$ and the embedding
$W^{2,2-s}(\Omega)\subset C(\Omega) ,s\in [0,\frac12)$, the first
term in the right-hand side of \eqref{1.est1} can be estimated as
follows:
\begin{multline}\label{1.fest}
 (|f(u_1)-f(u_2)|,|(-\Dx)^{-1}\Dt
 v|)\le\eb\|f(u_1)-f(u_2)\|_{L^1}^2+C_\eb\|\Dt
 v\|_{H^{-s}}^2\le\\\le
 \eb((|u_1|+|u_2|)^q,|v|)^2+\eb\|\Dt v\|_{L^2}^2+C_\eb\|\Dt
 v\|_{H^{-1}}^2\le\\\le
 \eb(\|u_1|^{q}+|u_2|^{q},1)(|u_1|^q+|u_2|^q,|v|^2)+\eb\|\Dt
 v\|^2_{L^2}+C_\eb\|\Dt v\|_{H^{-1}}^2\le\\\le \eb(\|\xi_{u_1}\|_{\Cal
 E}+\|\xi_{u_2}\|_{\Cal E})^{q}((|u_1|+|u_2|)^q,|v|^2)+\eb\|\Dt
 v\|_{L^2}^2+C_\eb\|\Dt v\|_{H^{-1}}^2,
\end{multline}
 where $\eb>0$ is arbitrary.

Analogously, using
 the growth restriction $p+1<6$, together with the H\"older inequality and the interpolation
\begin{multline*}
\|\Nx(-\Dx)^{-1}\Dt v\|_{L^{p+1}}^2\le \eb\|\Nx(-\Dx)^{-1}\Dt
v\|^2_{H^1}+\\+C_\eb\|\Nx(-\Dx)^{-1}\Dt v\|_{L^2}^2=\eb\|\Dt
v\|^2_{L^2}+C_\eb\|\Dt v\|^2_{H^{-1}},
\end{multline*}
 one can estimate the second
term in the right-hand side of \eqref{1.est1} as follows:
\begin{multline}\label{1.phiest}
(|\phi'(\Nx u_1)-\phi'(\Nx u_2)|,|\Nx(-\Dx)^{-1}\Dt
v|)\le\\\le\eb\|\phi(\Nx u_1)-\phi(\Nx u_2)\|_{L^{(p+1)/p}}^2+\eb\|\Nx
(-\Dx)^{-1}\Dt v\|_{L^{p+1}}^2\le\\\le
\eb((|\Nx u_1|+|\Nx u_2|)^{(p^2-1)/p},|\Nx
v|^{(p+1)/p})^{2p/(p+1)}+\eb\|\Dt v\|^2_{L^2}+C_\eb\|\Dt
v\|^2_{H^{-1}}\le\\
\eb((|\Nx u_1|+|\Nx u_2|)^{p+1},1)^{(p-1)/(p+1)}((|\Nx
u_1|+|\Nx u_2|)^{p-1}, |\Nx v|^2)+\eb\|\Dt v\|^2_{L^2}+C_\eb\|\Dt
v\|_{H^{-1}}^2\\\le \eb(\|\xi_{u_1}\|_{\Cal E}+\|\xi_{u_2}\|_{\Cal
E})^{p-1}((|\Nx u_1|+|\Nx u_2|)^{p-1}, |\Nx v|^2)+\eb\|\Dt
v\|_{L^2}^2+C_\eb\|\Dt v\|^2_{H^{-1}},
\end{multline}
where $\eb>0$ is again arbitrary.
\par
Inserting estimates \eqref{1.fest} and \eqref{1.phiest} into the
right-hand side of \eqref{1.est1}, taking a sum with estimate
\eqref{1.est} multiplied by $\frac2\gamma$, using the energy estimate \eqref{1.energy} for
estimating the $\Cal E$-norms of $\xi_{u_i}$ and fixing $\eb>0$
small enough, we finally deduce that
\begin{multline}\label{1.dlip}
\Dt \left[\frac\gamma2\|\Nx v\|_{L^2}^2+(\Dt v,v)+\frac2\gamma(\|\Dt
v\|_{H^{-1}}^2+\|v\|^2_{L^2})\right]+\\+2\beta(\|\Dt v\|^2_{L^2}+\|\Nx v\|^2_{L^2})\le C(\|v\|^2_{L^2}+\|\Dt
v\|^2_{H^{-1}}),
\end{multline}
where the constant $C$ depends on the $\Cal E$-norms of
$\xi_{u_1}(0)$ and $\xi_{u_2}(0)$.
Moreover, obviously, the function
\begin{equation}\label{1.e-1}
E_{-1}(v):=\frac\gamma2\|\Nx v\|_{L^2}^2+(\Dt v,v)+\frac2\gamma(\|\Dt
v\|_{H^{-1}}^2+\|v\|^2_{L^2})
\end{equation}
satisfies
\begin{equation}\label{1.2sest}
\kappa_1(\|\Dt v\|_{H^{-1}}^2+\|\Nx v\|^2_{H^{1}})\le E_{-1}(v)\le
\kappa_2(\|\Dt v\|_{H^{-1}}^2+\|\Nx v\|^2_{H^{1}})
\end{equation}
for some positive $\kappa_i$.
Applying the Gronwall inequality to \eqref{1.dlip}, we see that
\begin{equation}\label{1.Lip}
\|\Dt v(t)\|_{H^{-1}}^2+\|v(t)\|^2_{H^1}\le Ce^{Kt}(\|\Dt
v(0)\|_{H^{-1}}^2+\|v(0)\|_{H^1}^2)
\end{equation}
and the uniqueness holds.
\end{proof}
Thus, for every $\xi_u(0)\in\Cal E$, there exists a unique weak
solution $\xi_u(t)$ of problem \eqref{1.main} and, therefore, the
solution semigroup
\begin{equation}\label{1.sem}
S(t):\Cal E\to\Cal E,\ \ S(t)\xi_u(0):=\xi_u(t),\ \ \ t\ge0
\end{equation}
is well-defined in the energy space $\Cal E$. Moreover, as the
proved theorem shows, this semigroup is Lipschitz continuous in
the weaker space $\Cal E_{-1}:=H^1_0(\Omega)\cap H^{-1}(\Omega)$:
\begin{equation}\label{1.lip}
\|S(t)\xi_1-S(t)\xi_2\|_{\Cal E_{-1}}\le
Ce^{Kt}\|\xi_1-\xi_2\|_{\Cal E_{-1}},
\end{equation}
where the constants $C$ and $K$ depend on the $\Cal E$-norms of
$\xi_1$ and $\xi_2$. We will essentially use below this weak
Lipschitz continuity in order to prove the finite-dimensionality
of the global attractor.
\par
Our next task is to establish some partial smoothing property
of the constructed semigroup $S(t)$. We start with the case where the
$\Dt u$-component of the initial data is more regular.

\begin{proposition}\label{Prop1.dtreg} Let the assumptions of
Theorem \ref{Th1.uniq} be satisfied  and let the initial data $\xi_u(0)$ be
such that
\begin{multline}\label{1.reg}
\Dt u(0)\in H^1_0(\Omega),\ \ \ \Dt^2 u(0):=\gamma\Dt\Dx u(0)+\\+\Dx
u(0)+\Nx\phi'(\Nx u(0))-f(u(0))+g\in H^{-1}(\Omega).
\end{multline}
Then, the function $v(t):=\Dt u(t)$ is such that $\xi_v(t)\in\Cal
E_{-1}$ for every $t\ge0$ and the following estimate holds:
\begin{equation}\label{1.dtest}
\|\xi_v(t)\|^2_{\Cal E_{-1}}+\int_t^{t+1}\|\Dt v(s)\|^2_{L^2}\,ds\le
Q(\|\xi_v(0)\|_{\Cal E_{-1}}+\|\xi_u(0)\|_{\Cal E})e^{-\gamma
t}+Q(\|g\|_{L^2}),
\end{equation}
where the positive constant $C$ and monotone function $Q$
are independent of $t$ and $u$.
\end{proposition}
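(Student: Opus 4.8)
The plan is to differentiate \eqref{1.main} in time and to run the energy scheme of Theorem~\ref{Th1.uniq} on the equation for $v:=\Dt u$. Formally (the forcing $g$ does not depend on $t$) the function $v$ solves the linear, strongly damped wave equation
\[
\Dt^2 v-\gamma\Dt\Dx v-\Dx v+f'(u)\,v=\Nx\cdot(\phi''(\Nx u)\Nx v),\qquad v\big|_{\partial\Omega}=0 ,
\]
which carries exactly the monotone structure exploited for the difference equation \eqref{1.difer}. Indeed, by \eqref{2.int} the diffusion is coercive, $(\phi''(\Nx u)\Nx v,\Nx v)\ge a_0(|\Nx u|^{p-1},|\Nx v|^2)\ge0$, and \eqref{3.int} is precisely the infinitesimal form of the monotonicity of $f$, namely $(f'(u)v,v)=(f'(u),v^2)\ge-C\|v\|_{L^2}^2+a(|u|^q,v^2)$. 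Thus $f'(u)v$ and $\phi''(\Nx u)\Nx v$ play the roles of $f(u_1)-f(u_2)$ and $\phi'(\Nx u_1)-\phi'(\Nx u_2)$, and the computation \eqref{1.est}--\eqref{1.dlip} transcribes verbatim, with $|\Nx u_1|+|\Nx u_2|$ replaced by $|\Nx u|$ and $|u_1|+|u_2|$ by $|u|$.

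To justify this and to secure $\xi_v(t)\in\Cal E_{-1}$ I would not differentiate directly but approximate, either by Galerkin (differentiating the finite-dimensional equations, whose data lie in $\Cal E_{-1}$ by \eqref{1.reg}) or, equivalently, by the difference quotients $v_h(t):=h^{-1}(u(t+h)-u(t))$. Since \eqref{1.main} is autonomous, $u(\cdot+h)$ is again an energy solution, so each $v_h$ solves \eqref{1.difer} and the already proven inequalities \eqref{1.dlip}, \eqref{1.Lip} apply to it, with constants controlled through the dissipative bound \eqref{1.energy} uniformly in $h$; hypothesis \eqref{1.reg} is exactly what bounds $\|\xi_{v_h}(0)\|_{\Cal E_{-1}}$ uniformly in $h$ (its second line says that $\Dt v_h(0)=h^{-1}(\Dt u(h)-\Dt u(0))$ stays bounded in $H^{-1}$). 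Letting $h\to0$ yields $\xi_v(t)\in\Cal E_{-1}$ together with the differential inequality inherited from \eqref{1.dlip},
\[
\Dt E_{-1}(v)+2\beta\big(\|\Dt v\|_{L^2}^2+\|\Nx v\|_{L^2}^2\big)\le C\big(\|v\|_{L^2}^2+\|\Dt v\|_{H^{-1}}^2\big),
\]
with $E_{-1}(v)$ as in \eqref{1.e-1}--\eqref{1.2sest} and $C$ depending on the data only through the $\Cal E$-bound \eqref{1.energy}.

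The genuinely new point is to upgrade this \emph{growth} inequality, which in Theorem~\ref{Th1.uniq} was all that was needed, into the \emph{dissipative} inequality behind \eqref{1.dtest}, i.e. to turn its right-hand side into a forcing that decays up to a $\|g\|$-dependent floor. The term $\|v\|_{L^2}^2=\|\Dt u\|_{L^2}^2$ is harmless: it is of lower order than the energy and is bounded directly by Theorem~\ref{Th1.dis}, $\|v\|_{L^2}^2\le\|\xi_u(t)\|_{\Cal E}^2\le Q(\|\xi_u(0)\|_{\Cal E})e^{-\beta t}+Q(\|g\|_{L^2})$, so it may simply be moved to the right as admissible forcing. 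The real obstacle is the term $\|\Dt v\|_{H^{-1}}^2$, which sits inside $E_{-1}(v)$ and, as in the uniqueness proof, was produced by the nonlinear estimates \eqref{1.fest}, \eqref{1.phiest}; left untouched it only gives Gronwall growth. To absorb it one must use the \emph{strong} damping in an essential way: multiplication by $(-\Dx)^{-1}\Dt v$ supplies the full $\gamma\|\Dt v\|_{L^2}^2\ge\gamma\lambda_1\|\Dt v\|_{H^{-1}}^2$ (not merely an $H^{-1}$-level dissipation, $\lambda_1$ being the first Dirichlet eigenvalue). The key is then to notice that the \emph{same} small parameter $\eb$ that multiplies the coercive terms $a_0(|\Nx u|^{p-1},|\Nx v|^2)$, $a(|u|^q,v^2)$ in \eqref{1.fest}, \eqref{1.phiest} also multiplies the $\|\Dt v\|_{H^{-1}}^2$ contribution, so that taking $\eb$ small enough (which only helps the coercive absorption) drives the coefficient of $\|\Dt v\|_{H^{-1}}^2$ below $\gamma\lambda_1$; the surviving $\|\Dt v\|_{H^{-1}}^2$ is then dominated by the strong damping. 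I expect this bookkeeping — arranging that the strong damping genuinely \emph{dominates}, rather than merely balances, the $H^{-1}$-contribution of the $\phi''$- and $f'$-terms, which is precisely where the restriction $p<5$ re-enters through the interpolation of $\|\Nx(-\Dx)^{-1}\Dt v\|_{L^{p+1}}$ — to be the main difficulty. Granting it, one obtains $\Dt E_{-1}(v)+\delta E_{-1}(v)+\delta\|\Dt v\|_{L^2}^2\le Q(\|\xi_u(0)\|_{\Cal E})e^{-\beta t}+Q(\|g\|_{L^2})$, and the Gronwall inequality together with integration over $[t,t+1]$ yields \eqref{1.dtest}.
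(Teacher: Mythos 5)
Your setup coincides with the paper's: $v=\Dt u$ solves the linearized equation $\Dt^2 v-\gamma\Dt\Dx v-\Dx v+f'(u)v=\Nx(\phi''(\Nx u)\Nx v)$, which has the structure of \eqref{1.difer}, and multiplying by $v+\frac2\gamma(-\Dx)^{-1}\Dt v$ and repeating the computations of Theorem \ref{Th1.uniq} yields exactly the paper's inequality \eqref{1.dtdif}; your handling of the $\|v\|^2_{L^2}$-term via \eqref{1.energy} is also the paper's. But the mechanism you propose for the crucial step --- absorbing $\|\Dt v\|^2_{H^{-1}}$ --- is backwards and would fail. In \eqref{1.fest} and \eqref{1.phiest} the small parameter $\eb$ multiplies the coercive terms and the $\|\Dt v\|^2_{L^2}$-term, while the $H^{-1}$-contribution carries the \emph{large} constant $C_\eb$, with $C_\eb\to\infty$ as $\eb\to0$: this is forced by the Young splitting of the interpolation $\|\Nx(-\Dx)^{-1}\Dt v\|_{L^{p+1}}^2\le\eb\|\Dt v\|^2_{L^2}+C_\eb\|\Dt v\|^2_{H^{-1}}$, since a nonlinearity controlled only in $L^1$ (resp.\ $L^{(p+1)/p}$) paired against $(-\Dx)^{-1}\Dt v$ requires a norm of $\Dt v$ strictly stronger than $H^{-1}$, and keeping the $L^2$-coefficient small necessarily loads the constant onto the $H^{-1}$-term. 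So shrinking $\eb$ makes the coefficient of $\|\Dt v\|^2_{H^{-1}}$ \emph{larger}, and the damping harvested from the multiplier $(-\Dx)^{-1}\Dt v$ is only $2\gamma\|\Dt v\|^2_{L^2}$, i.e.\ a \emph{fixed} amount $\gamma\lambda_1\|\Dt v\|^2_{H^{-1}}$ of $H^{-1}$-level dissipation, which cannot dominate $C_\eb Q(\|\xi_u\|_{\Cal E})\|\Dt v\|^2_{H^{-1}}$. Indeed, if your bookkeeping worked, the same argument applied in Theorem \ref{Th1.uniq} would have upgraded the Lipschitz estimate \eqref{1.Lip} from growth $e^{Kt}$ to decay --- which is false in general.

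The missing idea, which is how the paper closes the loop, is an estimate on $\Dt v=\Dt^2 u$ that comes from the \emph{equation itself} rather than from the energy differential inequality: expressing $\Dt^2 u$ from \eqref{1.main} and pairing with test functions $\varphi\in H^3$ (so that $\Nx\varphi\in H^2\subset L^\infty$, i.e.\ using $L^1\subset H^{-2}$ for $f(u)$ and $\phi'(\Nx u)$), one gets \eqref{1.dt2}, $\|\Dt v(t)\|_{H^{-3}}\le Q(\|\xi_u(t)\|_{\Cal E})+\|g\|_{L^2}$, with the right-hand side dissipative by \eqref{1.energy}. Combined with the interpolation \eqref{1.interp}, $\|\Dt v\|_{H^{-1}}\le C\|\Dt v\|_{H^{-3}}^{1/3}\|\Dt v\|_{L^2}^{2/3}$, the problematic term obeys $Q(\|\xi_u\|_{\Cal E})\|\Dt v\|^2_{H^{-1}}\le\eb\|\Dt v\|^2_{L^2}+C_\eb\,\tilde Q(\|\xi_u\|_{\Cal E})\|\Dt v\|^2_{H^{-3}}$, with the small parameter now genuinely on the $L^2$-term: the first summand is absorbed by $\beta\|\Dt v\|^2_{L^2}$ on the left of \eqref{1.dtdif}, and the second is bounded by a dissipative forcing $Q(\|\xi_u(t)\|_{\Cal E})$. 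This produces \eqref{1.dtdiff}, after which Gronwall and \eqref{1.energy} give \eqref{1.dtest}. Your justification scheme (Galerkin or difference quotients, with \eqref{1.reg} providing the uniform bound on $\|\xi_{v}(0)\|_{\Cal E_{-1}}$) is fine and matches the paper's intent, but without the $H^{-3}$-bound and the interpolation the differential inequality cannot be made dissipative.
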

\begin{proof} As in the proof of the Theorem \ref{Th1.dis}, we give below only
formal derivation of the estimate \eqref{1.dtest} which can be
justified, e.g., by the Galerkin approximation method.
\par
Indeed, the function $v(t)$ solves
\begin{equation}\label{1.dteq}
\Dt^2 v-\gamma\Dt\Dx v-\Dx v+f'(u)v=\Nx(\phi''(\Nx u)\Nx v),\ \
\xi_v(0)\in\Cal E_{-1}
\end{equation}
which is of the form \eqref{1.difer}. By this reason, multiplying
this equation by the function $v(t)+\frac2\gamma(-\Dx)^{-1}\Dt v(t)$ and
arguing exactly as in the derivation of \eqref{1.dlip}, we end up
with the following inequality:
\begin{multline}\label{1.dtdif}
\Dt E_{-1}(v(t))+\beta E_{-1}(v(t))+\\+\beta(\|\Dt v\|^2_{L^2}+
\|\Nx v\|^2_{L^2})\le Q(\|\xi_u(t)\|_{\Cal E})(\|v(t)\|_{L^2}^2+\|\Dt
v(t)\|_{H^{-1}}^2),
\end{multline}
where $\beta>0$ is a fixed constant, $Q$ is a given monotone
function and the functional $E_{-1}(v)$ is given by \eqref{1.e-1}.
Moreover, using the  embedding $L^1\subset H^{-2}$ and
expressing $\Dt^2 u$ from equation \eqref{1.main}, we have
\begin{equation}\label{1.dt2}
\|\Dt v(t)\|_{H^{-3}}\le Q(\|\xi_u(t)\|_{\Cal E})+\|g\|_{L^2}
\end{equation}
for some monotone function $Q$. Indeed, taking the inner product
of equation \eqref{1.e-1} with arbitrary test function $\varphi\in
H^3=D((-\Dx)^{3/2})$, we have
$$
(\Dt^2 u,\varphi)=\gamma(\Dt
v,\Dx\varphi)+(v,\Dx\varphi)-(\phi'(\Nx
u),\Nx\varphi)-(f(u),\varphi)+(g,\varphi).
$$
Using now that the $L^1$-norms of $f(u)$ and $\varphi'(\Nx u)$ are
controlled by the energy norm and the fact that $H^2\subset
L^\infty$, we deduce estimate \eqref{1.dt2}.
\par
 This estimate, together with the
interpolation inequality
\begin{equation}\label{1.interp}
\|\Dt v(t)\|_{H^{-1}}\le C\|\Dt v(t)\|_{H^{-3}}^{1/3}\|\Dt
v(t)\|_{L^2}^{2/3},
\end{equation}
allows us to control the right-hand side of \eqref{1.dtdif}
\begin{equation}\label{1.dtdiff}
\Dt E_{-1}(v(t))+\beta E_{-1}(v(t))+\frac12\beta(\|\Dt v\|^2_{L^2}+
\|\Nx v\|^2_{L^2})\le Q(\|\xi_u(t)\|_{\Cal E})
\end{equation}
for some new monotone function $Q$. Applying the Gronwall
inequality to this relation and using \eqref{1.energy} to control the
right-hand side, we derive the required estimate \eqref{1.dtest}
and finish the proof of the proposition.
\end{proof}
Finally, the next proposition shows that  the $\Dt u$-component of
the energy solution becomes more regular for $t>0$ (in a complete
agreement with the semi-linear case, see \cite{PZ}).

\begin{proposition}\label{Prop1.dtsmooth} Let the assumptions of
Theorem \ref{Th1.uniq} holds and let $u(t)$ be a weak energy solution of equation \eqref{1.main}. Denote
$v(t):=\Dt u(t)$. Then, $\xi_v(t)\in\Cal E_{-1}$ for any $t>0$ and the following
estimate holds:
\begin{multline}\label{1.dtsmooth}
\|\Dt u(t)\|^2_{H^1}+\|\Dt^2 u(t))\|_{H^{-1}}^2+\int_t^{t+1}\|\Dt^2
u(s)\|^2_{L^2}\,ds\le\\\le \frac{{1+t^N}}{t^N}Q(\|\xi_u(0)\|_{\Cal
E})e^{-\beta t}+Q(\|g\|_{L^2}),
\end{multline}
where $N>0$, $\beta>0$ are some constants and $Q$ is a monotone
function which are independent of $t$ and $u$.
\end{proposition}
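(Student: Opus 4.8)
The plan is to exploit that $v:=\Dt u$ satisfies the linear equation \eqref{1.dteq}, which has exactly the structure of the difference equation \eqref{1.difer} treated in Theorem \ref{Th1.uniq} and Proposition \ref{Prop1.dtreg}; the novelty here is that no regularity of $\xi_v(0)$ is assumed, so the $\Cal E_{-1}$-regularity of $\xi_v(t)$ for $t>0$ must be \emph{generated} by the parabolic dissipation carried by the term $-\gamma\Dt\Dx v$, at the price of the singular factor $(1+t^N)/t^N$. First I would reproduce the computation of Proposition \ref{Prop1.dtreg} verbatim --- multiplying \eqref{1.dteq} by $v+\frac2\gamma(-\Dx)^{-1}\Dt v$ and absorbing the nonlinear terms $f'(u)v$ and $\Nx(\phi''(\Nx u)\Nx v)$ through their good monotone contributions exactly as in \eqref{1.est}--\eqref{1.phiest} --- to obtain, for a.e. $t>0$, the differential inequality \eqref{1.dtdiff}, i.e. $\Dt E_{-1}(v)+\beta E_{-1}(v)+\frac\beta2(\|\Dt v\|^2_{L^2}+\|\Nx v\|^2_{L^2})\le Q(\|\xi_u(t)\|_{\Cal E})$, with $E_{-1}$ as in \eqref{1.e-1}. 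By the dissipative estimate \eqref{1.energy} the right-hand side is bounded uniformly for $t\ge0$, and the whole derivation will be carried out on Galerkin approximations, where $E_{-1}(v)$ is finite and continuous up to $t=0$, so that the final bound may be passed to the limit by weak-$*$ lower semicontinuity.

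The two structural inputs that make the smoothing work are: (i) the energy estimate \eqref{1.energy}, which furnishes $\int_0^1\|\Nx v(s)\|^2_{L^2}\,ds\le Q(\|\xi_u(0)\|_{\Cal E})$ and hence, by the mean-value theorem, a time $s_*\in(t/2,t)$ at which $\|v(s_*)\|^2_{H^1}\le Ct^{-1}\int_0^t\|\Nx v\|^2_{L^2}$ is controlled; and (ii) the bound \eqref{1.dt2} together with the interpolation \eqref{1.interp}, which convert negative-norm control of $\Dt v$ into $L^2$-control modulo a bounded remainder, $\|\Dt v\|^2_{H^{-1}}\le\eb\|\Dt v\|^2_{L^2}+C_\eb$. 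Using \eqref{1.2sest} and (ii), the lower-order part $\|v\|^2_{H^1}$ of $E_{-1}(v)$ is controlled by \eqref{1.energy} in $L^1$ in time, while the $\Dt v$-part is tied, up to a bounded term, to the dissipation $\|\Dt v\|^2_{L^2}$ already present on the left of \eqref{1.dtdiff}.

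The smoothing itself I would realise by a time-weighted Gronwall argument: multiply \eqref{1.dtdiff} by $t^N$ and integrate over $[0,t]$, so that the weight annihilates the (in the limit, possibly infinite) contribution of $E_{-1}(v(0))$ and leaves only the commutator term $N\int_0^t s^{N-1}E_{-1}(v)\,ds$ coming from the differentiated weight. After splitting $E_{-1}(v)$ by (i)--(ii), its $\|v\|^2_{H^1}$- and $\|\Dt v\|^2_{H^{-3}}$-pieces integrate against $s^{N-1}$ to quantities bounded by \eqref{1.energy} and \eqref{1.dt2}, and only its $\|\Dt v\|^2_{L^2}$-piece must be reabsorbed into the weighted dissipation $\frac\beta2\int_0^t s^N\|\Dt v\|^2_{L^2}\,ds$. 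This reabsorption is the heart of the matter and the step I expect to be the main obstacle, because the commutator carries the weight $s^{N-1}$ while the dissipation carries $s^N$, so the absorption fails near $s=0$; I would close it by a finite descent in the power of the weight, each integration of \eqref{1.dtdiff} against $s^{k}$ feeding the dissipation integral at the level $k-1$, the number of steps fixing the exponent $N$ in \eqref{1.dtsmooth}, and by initialising the argument at the interior time $s_*$ from (i), which is what produces the singular prefactor $t^{-N}$. Once $E_{-1}(v(t))$ is bounded by $t^{-N}Q(\|\xi_u(0)\|_{\Cal E})e^{-\beta t}+Q(\|g\|_{L^2})$, the claimed bounds on $\|\Dt u(t)\|^2_{H^1}$ and $\|\Dt^2 u(t)\|^2_{H^{-1}}$ follow from \eqref{1.2sest}, and integrating \eqref{1.dtdiff} once more over $[t,t+1]$ yields the dissipation integral $\int_t^{t+1}\|\Dt^2 u(s)\|^2_{L^2}\,ds$; a final passage to the limit in the Galerkin scheme completes the proof.
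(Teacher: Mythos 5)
Your overall architecture coincides with the paper's: derive the differential inequality \eqref{1.dtdiff} for $E_{-1}(\Dt u)$ exactly as in Proposition \ref{Prop1.dtreg} (whose derivation never uses the regularity of $\xi_v(0)$), then run a time-weighted Gronwall argument in which the weight kills the initial data, the $\|v\|^2_{H^1}$-part of the commutator is integrable thanks to \eqref{1.energy}, and the $\Dt v$-part is treated via the interpolation \eqref{1.interp} together with the $H^{-3}$-bound \eqref{1.dt2}; the paper likewise reduces to $t\in(0,1]$ via \eqref{1.dtest} and multiplies \eqref{1.dtdiff} by $t^3$. However, the two devices you propose to close the absorption step --- the step you yourself flag as the heart of the matter --- both fail as stated. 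The ``finite descent'' recurses in the wrong direction: estimating the commutator $Ns^{N-1}E_{-1}(v)$ by $\eb\, s^{N-1}\|\Dt v\|^2_{L^2}+C_\eb s^{N-1}$ with a \emph{fixed} Young parameter ties the level-$k$ weighted dissipation $\int_0^t s^k\|\Dt v\|^2_{L^2}\,ds$ to the level-$(k-1)$ one, and the recursion bottoms out at $\int_0^t\|\Dt^2 u(s)\|^2_{L^2}\,ds$, which is precisely the quantity that diverges for energy initial data (there is no a priori $L^2$-in-time control of $\Dt^2u$ up to $t=0$). Nor can the initialisation at the mean-value time $s_*$ rescue it: the mean-value argument from \eqref{1.energy} controls only $\|v(s_*)\|_{H^1}$, whereas $E_{-1}(v(s_*))$ also contains $\|\Dt^2 u(s_*)\|^2_{H^{-1}}$, and this is not controlled pointwise in time --- for $q>4$ the term $f(u)$ lies only in $L^{(q+2)/(q+1)}\not\subset H^{-1}$, so the equation yields only the $H^{-3}$-bound \eqref{1.dt2}, and $H^{-3}$ does not embed into $H^{-1}$.

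The repair is a one-line sharpening of an estimate you almost have in hand: let the Young parameter scale with the weight. Taking $N=3$, by \eqref{1.interp},
\[
t^2\|\Dt v\|^2_{H^{-1}}\le C t^2\|\Dt v\|^{4/3}_{L^2}\|\Dt v\|^{2/3}_{H^{-3}}\le \eb\, t^3\|\Dt v\|^2_{L^2}+C^3\eb^{-2}\|\Dt^2 u\|^2_{H^{-3}},
\]
so the bad piece of the commutator is absorbed into the weighted dissipation while the leftover carries weight $t^{3-3}=1$ and is bounded pointwise by \eqref{1.dt2}; a single integration of the weighted inequality over $(0,t]$, $t\le1$, then gives $E_{-1}(v(t))\le t^{-3}Q(\|\xi_u(0)\|_{\Cal E})+Q(\|g\|_{L^2})$, and Proposition \ref{Prop1.dtreg} restarted from $t=1$ supplies the exponential decay for $t\ge1$. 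This is exactly the paper's proof; your remaining steps (integrating \eqref{1.dtdiff} once more over $[t,t+1]$ to capture $\int_t^{t+1}\|\Dt^2 u(s)\|^2_{L^2}\,ds$, and the justification on Galerkin approximations with passage to the limit) agree with it.
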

\begin{proof} Indeed, due to estimate \eqref{1.dtest}, we only
need to verify \eqref{1.dtsmooth} for $t\in(0,1]$. Let us multiply
estimate \eqref{1.dtdiff} by $t^3$. Then, after the evident
transformations, we get
\begin{multline}\label{1.dtlast}
\Dt(t^3\|\xi_v(t)\|^2_{\Cal E_{-1}})+\beta
(t^3\|v(t)\|_{H^1}^2+t^3\|\Dt v(t)\|^2_{L^2})\le\\\le
Q(\|\xi_u(t)\|_{\Cal E})(1+\|\Dt\Nx u(t)\|^2_{L^2})+Ct^2\|\Dt
v\|_{H^{-1}}^2, \ t\in[0,1],
\end{multline}
for some new monotone function $Q$ and a constant $C$. We see that
all terms in the right-hand side except the last one are under the
control due to the energy estimate \eqref{1.energy}. In order to
estimate this term, we use the interpolation inequality
\eqref{1.interp} and the Young inequality as follows:
$$
t^2\|\Dt v\|^2_{H^{-1}}\le C t^2\|\Dt v\|^{4/3}_{L^2}\|\Dt
v\|_{H^{-3}}^{2/3}\le \eb t^3\|\Dt v\|_{L^2}^2+C^3\eb^{-2}\|\Dt^2
u\|_{H^{-3}}^2,
$$
where $\eb>0$ is small enough. Inserting this estimate into the
right-hand side of \eqref{1.dtlast}, integrating it over $t$ and
using \eqref{1.energy} and \eqref{1.dt2}, we arrive at estimate
\eqref{1.dtsmooth} (recall that $t\in(0,1]$) and finish the proof
of the proposition.
\end{proof}

\section{Weak global attractor and
finite-dimensionality}\label{s.atr}
In this section, we will construct the global attractor for the
semigroup $S(t)$ generated by weak energy solutions of
\eqref{1.main} and prove that it has finite fractal dimension.
Unfortunately, we are not able to construct the global attractor
in a strong topology of $\Cal E$, but only in the weaker topology
of $\Cal E_{-1}$.
\par
Namely, a set $\Cal A\subset\Cal E$ is a weak global attractor of the
solution semigroup $S(t):\Cal E\to\Cal E$ generated by equation
\eqref{1.main} (the so-called $(\Cal E,\Cal E_{-1})$-attractor in
the terminology of Babin and Vishik, see \cite{BV}) if
\par
1) The set $\Cal A$ is bounded in $\Cal E$ and compact in $\Cal
E_{-1}$.
\par
2) It is strictly invariant: $S(t)\Cal A=\Cal A$, $t\ge0$.
\par
3) The set $\Cal A$ attracts the images of bounded in $\Cal E$
sets in the topology of $\Cal E_{-1}$, i.e., for every bounded in
$\Cal E$ set $B$ and every neighborhood $\Cal O(\Cal A)$ of $\Cal
A$ in $\Cal E_{-1}$, there is a time $T=T(B,\Cal O)$ such that
$$
S(t)B\subset \Cal O(\Cal A),\ \ t\ge T.
$$
The following theorem which establishes the existence of such an
attractor for semigroup \eqref{1.sem} is the main result of this
section.

\begin{theorem}\label{Th2.fd} Let the assumptions of Theorem
\ref{Th1.dis} be satisfied. Then, the solution semigroup $S(t)$ generated
by equation \eqref{1.main} in the phase space $\Cal E$ possesses
a weak global attractor $\Cal A$ in the above sense. The attractor $\Cal A$ consists of all complete bounded trajectories of
equation \eqref{1.main}:
\begin{equation}\label{2.str}
\Cal A=\Cal K\big|_{t=0},
\end{equation}
where $\Cal K\subset L^\infty(\R,\Cal E)$ is a set of all
solutions $\xi_u(t)$ of equation \eqref{1.main} which are defined
for all $t\in\R$ and bounded: $\|\xi_u(t)\|_{\Cal E}\le C_u$,
$t\in\R$.
\par
Moreover, the attractor $\Cal A$ has finite fractal dimension in
$\Cal E_{-1}$:
\begin{equation}\label{2.fd}
\dim_F(\Cal A,\Cal E_{-1})\le C<\infty.
\end{equation}
\end{theorem}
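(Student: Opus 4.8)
The plan is to use the method of $l$-trajectories, which is well suited to the present situation: the semigroup $S(t)$ is dissipative in $\Cal E$ by Theorem~\ref{Th1.dis} and Lipschitz continuous only in the weaker norm of $\Cal E_{-1}$ by \eqref{1.lip}, and we cannot expect any instantaneous smoothing of the $u$-component in the phase space. Fix $l>0$ and let $X_l$ denote the set of all weak energy solutions on $[0,l]$, equipped with the metric of $L^2([0,l],\Cal E_{-1})$; on $X_l$ we have the shift semigroup $L_t:\chi=u|_{[0,l]}\mapsto u|_{[t,t+l]}$. The dissipative estimate \eqref{1.energy} produces a bounded set in $X_l$ absorbing all trajectory-bounded families, and \eqref{1.lip} shows that each $L_t$ is Lipschitz on $X_l$. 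The abstract theory then reduces the existence of the attractor to a single compactness statement: for some $\tau>0$, the map $L_\tau$ sends the absorbing set into a relatively compact subset of $X_l$.

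First I would establish this compactness. Take a sequence $u_n$ of solutions, bounded in $L^\infty(\Cal E)$, so that by \eqref{1.energy} and the smoothing estimates \eqref{1.dtest}--\eqref{1.dtsmooth} the family $\Dt u_n$ is bounded in $L^2([\tau,2\tau],H^1_0)$ with $\Dt^2 u_n$ bounded in $L^2(L^2)$. An Aubin--Lions argument then gives $\Dt u_n\to \Dt u$ strongly in $L^2([\tau,2\tau],L^2)\hookrightarrow L^2([\tau,2\tau],H^{-1})$, which handles the second component of $\Cal E_{-1}$; passing to the limit in the monotone term via the Galerkin/monotonicity scheme of Theorem~\ref{Th1.dis} identifies $u$ as a solution. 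The essential point is the strong convergence of the gradients $\Nx u_n\to\Nx u$ in $L^2([\tau,2\tau],L^2)$, i.e. compactness of the first component in $H^1_0$. This cannot come from a Sobolev embedding, since $W^{1,p+1}_0$ is not compactly embedded into $H^1_0$; instead I would extract it from the monotonicity inequality \eqref{4.int} of Lemma~\ref{Da} together with the coercive linear term $-\Dx u$, testing the difference equation as in \eqref{1.est} and using the already-established convergence of $\Dt u_n$ and of $u_n$ in $L^2(L^2)$ to show that $\int(\phi'(\Nx u_n)-\phi'(\Nx u),\Nx(u_n-u))\to0$, whence $\int\|\Nx(u_n-u)\|^2_{L^2}\to0$. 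This gives relative compactness of $L_\tau$, and the $l$-trajectory theorem yields an attractor $\Cal A_l$ in $X_l$ whose image under the endpoint map is the desired $(\Cal E,\Cal E_{-1})$-attractor $\Cal A$. The representation \eqref{2.str} as the set of bounded complete trajectories is then standard, using strict invariance and the dissipative bound.

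For the finite-dimensionality \eqref{2.fd} I would stay within the $l$-trajectory framework and use the differential inequality \eqref{1.dlip} for the difference $v=u_1-u_2$ of two solutions. Integrated over $[0,l]$, it controls the strong norm $\int_0^l(\|\Dt v\|^2_{L^2}+\|\Nx v\|^2_{L^2})$ by the initial energy $E_{-1}(v(0))$ plus the weaker quantity $\int_0^l(\|v\|^2_{L^2}+\|\Dt v\|^2_{H^{-1}})$; combined with \eqref{1.lip} and the equation (to bound $\Dt^2 v$ in a negative Sobolev norm), this is precisely the Lipschitz smoothing hypothesis of the abstract $l$-trajectory dimension estimate, with the compact embedding furnished by Aubin--Lions in the trajectory space, so that $\Cal A_l$ has finite fractal dimension in $X_l$. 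To transfer this to $\Cal A$ in $\Cal E_{-1}$ I would use that the endpoint map $\chi\mapsto\chi(l)$ is Lipschitz from $X_l$ to $\Cal E_{-1}$: choosing $t_*\in[0,l]$ with $\|v(t_*)\|^2_{\Cal E_{-1}}\le l^{-1}\|v\|^2_{L^2([0,l],\Cal E_{-1})}$ and flowing from $t_*$ to $l$ by \eqref{1.lip} bounds $\|v(l)\|_{\Cal E_{-1}}$ by the trajectory norm, so the dimension cannot increase under this projection.

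The hard part, I expect, is exactly the strong compactness of the gradients in $H^1_0$ that underlies both the existence of the attractor and the finite-dimensional smoothing estimate. Because the nonlinearity $\phi$ is genuinely growing and $W^{1,p+1}_0$ does not embed compactly into $H^1_0$, all of the compactness must be wrung out of the monotone structure \eqref{4.int} and the dissipation in \eqref{1.dlip}; keeping track of the weights $(|\Nx u_1|+|\Nx u_2|)^{p-1}$ and ensuring that the subcritical restriction $p<5$ is used precisely where the H\"older and interpolation steps \eqref{1.fest}--\eqref{1.phiest} demand it is the delicate bookkeeping on which the whole argument rests.
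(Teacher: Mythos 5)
Your overall strategy --- the method of $l$-trajectories built on the weak Lipschitz continuity \eqref{1.lip}, the differential inequality \eqref{1.dlip}, Aubin--Lions compactness between trajectory spaces, and the averaged endpoint map --- is exactly the paper's route, and your finite-dimensionality paragraph reproduces, in essence, the paper's key Lemma~\ref{Lem2.ltr} and the squeezing estimate \eqref{2.sqz} in the spaces \eqref{2.spaces}. However, the compactness statement on which you hang the \emph{existence} part is genuinely flawed. You claim that for some fixed $\tau>0$ the map $L_\tau$ sends the absorbing set into a relatively compact subset of $X_l$, and you propose to extract the needed strong convergence $\Nx u_n\to\Nx u$ in $L^2([\tau,2\tau],L^2)$ from the monotonicity \eqref{4.int} by integrating the inequality \eqref{1.est} over the window. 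This integration leaves the boundary term $\frac\gamma2\|\Nx v_n(\tau)\|^2_{L^2}+(v_n(\tau),\Dt v_n(\tau))$ at the left end of the window, which is bounded but does \emph{not} vanish --- it is precisely the quantity whose smallness you are trying to prove --- so the argument is circular: all the monotone structure yields is the inequality of Lemma~\ref{Lem2.ltr}, in which the strong norm at time $t$ is controlled by an \emph{exponentially small} (but nonzero) multiple of the strong norm at time $s$ plus integrals of weak norms. Worse, the compactness you assert is actually false for this equation: the $u$-component of the strongly damped wave flow has no smoothing whatsoever, since the characteristic roots $\mu^2+\gamma\lambda_k\mu+\lambda_k=0$ of the linearization have the slow branch $\mu_k^-\to-1/\gamma$ as $\lambda_k\to\infty$, so high-frequency gradient energy decays only at the frequency-independent rate $e^{-t/\gamma}$; the essential spectral radius of the linear solution operator is at least $e^{-t/\gamma}>0$, and a weakly null sequence of high-frequency data produces differences with $\|\Nx v_n\|_{L^2([\tau,2\tau],L^2)}$ bounded away from zero for every fixed $\tau$. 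No amount of bookkeeping with the weights $(|\Nx u_1|+|\Nx u_2|)^{p-1}$ can overcome this, because those weights add dissipation only where the solutions are large, not at high frequencies; this is exactly why the paper constructs only a weak $(\Cal E,\Cal E_{-1})$-attractor and never claims compactness of any solution map on fixed time windows.

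The repair is short, and you in fact already possess it: the smoothing estimate you derive for the dimension bound (the integrated form of \eqref{1.dlip}, i.e.\ \eqref{2.Ltr}, together with the equation-based control of $\Dt^2 v$ in $L^2(H^{-3})$ via the $L^1\subset H^{-2}$ embedding and the interpolation replacing $\|\Dt v\|_{H^{-1}}$ by $\|\Dt v\|_{H^{-2}}$ on the right-hand side) gives, after averaging over the window, precisely \eqref{2.sqz}: the strong trajectory norm of $\Bbb S(L)\xi_1-\Bbb S(L)\xi_2$ is bounded by $\frac{C_1}{L}$ times the strong trajectory norm plus $C_L$ times the compactly weaker one. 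Choosing $L$ so large that $C_1/L<1$, the abstract $l$-trajectory theorem of M\'alek--Pra\v z\'ak yields the attractor for the discrete map $\Bbb S(L)$ \emph{and} its finite fractal dimension simultaneously, with no separate compactness assertion about $L_\tau$ needed; your endpoint-map argument (picking $t_*\in[0,l]$ by the mean-value trick and flowing by \eqref{1.lip}) then transfers both conclusions to $\Cal E_{-1}$, and the representation \eqref{2.str} is standard. So your second and third paragraphs constitute essentially the paper's proof; the first paragraph's compactness claim must be deleted rather than patched.
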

\begin{proof} We first note that, due to the
dissipative estimate \eqref{1.energy}, the ball
$$
B_R:=\{\xi\in\Cal E,\ \ \|\xi\|_{\Cal E}\le R\}
$$
is an absorbing ball for the solution semigroup $\{S(t),\,t\ge0\}$
associated with equation \eqref{1.main} if $R$ is large enough.
Moreover, the same estimate guarantees also that the set
$$
\Cal B_R:=\big[\cup_{t\ge0}B_R\big]_{\Cal E_{-1}},
$$
where $[\cdot]_V$ denotes the closure in the space $V$,
will be bounded and closed in $\Cal E$ absorbing set satisfying
the additional semi-invariance property
\begin{equation}\label{2.inv}
S(t)\Cal B_R\subset\Cal B_R.
\end{equation}
Thus, it is sufficient to construct the global attractor $\Cal A$
for the semigroup $S(t)$ acting on $\Cal B_R$ only.
In order to do that, we need to refine  estimate
\eqref{1.dlip}.
\begin{lemma}\label{Lem2.ltr}
 Let $\xi_{u_1}(t)$ and $\xi_{u_2}(t)$ be
two trajectories of the semigroup $S(t)$ starting from the
absorbing set $\Cal B_u$ and let $v(t):=u_1(t)-u_2(t)$.
Then, the following estimate holds:
\begin{multline}\label{2.Ltr}
\|\xi_v(t)\|^2_{\Cal E_{-1}}+\int_s^t \|\Dt^2
v(\tau)\|^2_{H^{-3}}\,d\tau\le\\\le
Ce^{-\kappa(t-s)}\|\xi_v(s)\|^2_{\Cal
E_{-1}}+C\int_s^t(\|v(\tau)\|^2_{L^2}+\|\Dt
v(\tau)\|^2_{H^{-2}})\,d\tau,
\end{multline}
where the positive constants $C$ and $\kappa$ are independent
of $u_1$ and $u_2$ and $t\ge s\ge0$.
\end{lemma}
\begin{proof} Indeed, function $v(t)$
 solves equation \eqref{1.difer} and, arguing exactly
as in the proof of Theorem \ref{Th1.uniq}, we verify that estimate
\eqref{1.dlip} holds with the constant $C$ independent of the
choice of $\xi_{u_1}(0),\xi_{u_2}(0)\in\Cal E$. Using now estimate
\eqref{1.2sest} together with the interpolation inequality, we infer
from \eqref{1.dlip}  and \eqref{1.est} that
\begin{multline}\label{2.dlip}
\frac d{dt}E_{-1}(v(t))+\kappa E_{-1}(v(t))+
\kappa(\phi'(\Nx u_1(t))-\phi'(\Nx u_2(t)),\Nx v(t))+\\+\kappa(|f(u_1(t))-f(u_2(t))|,|v(t)|)\le
C(\|v(t)\|^2_{L^2}+\|\Dt v(t)\|^2_{H^{-2}}),
\end{multline}
where $\kappa$ and $C$ are fixed positive constants independent of
$v$. Applying the Gronwall inequality and using \eqref{1.e-1}
again, we  get
\begin{multline}\label{2.ltr}
\|\xi_v(t)\|^2_{\Cal E_{-1}}+\int_s^t(\phi'(\Nx u_1(\tau))-\phi'(\Nx u_2(\tau)),\Nx
v(\tau))+\\+
\kappa(|f(u_1(\tau))-f(u_2(\tau))|,|v(\tau)|)\,d\tau\le\\\le
C_1e^{-\kappa(t-s)}\|\xi_v(s)\|^2_{\Cal
E_{-1}}+C_1\int_s^t(\|v(\tau)\|^2_{L^2}+\|\Dt
v(\tau)\|^2_{H^{-2}})\,d\tau,
\end{multline}
where $\kappa$ and $C_1$ are independent of $v$ and $t\ge s\ge0$.
\par
Furthermore, arguing exactly as in \eqref{1.fest} and
\eqref{1.phiest}, we see that
\begin{multline}\label{2.der1}
\|\phi'(\Nx u_1)-\phi'(\Nx
u_2)\|_{L^1}^2+\|f(u_1)-f(u_2)\|_{L^1}^2\le\\\le C (\phi(u_1(t))-\phi(u_2(t)),\Nx v(t))
+C(|f(u_1(t))-f(u_2(t))|,|v(t)).
\end{multline}
Using equation \eqref{1.difer} in order to express the value of
$\Dt^2 v$ together with \eqref{2.ltr}, \eqref{2.der1} and the
embedding $L^1\subset H^{-2}$, we deduce the desired estimate
\eqref{2.Ltr} and finish the proof of the lemma.
 \end{proof}

The proved estimate \eqref{2.Ltr} is the key technical tool for the so-called
method of $l$-trajectories, see \cite{MP}. Indeed, let $L$ be a
sufficiently large number which will be fixed below and let
\begin{equation}\label{2.spaces}
\begin{cases}
\Cal H_1:=L^2([0,L],H^1_0(\Omega))\cap H^1([0,L],H^{-1}(\Omega))\cap H^2([0,L],H^{-3}),\\
\Cal H:=L^2([0,L],L^2(\Omega))\cap H^1([0,L],H^{-2}).
\end{cases}
\end{equation}
Then, evidently, the space $\Cal H_1$ is compactly embedded to
$\Cal H$. Introduce now the lifting operator
$$
\Bbb T_L:\Cal B_R\to\Cal H_1,\ \ \Bbb T_L\xi:=u(\cdot),
$$
where $u$ is an $L$-piece of the trajectory starting from
$\xi\in\Cal B_R$. Let also $\Bbb B^{tr}:=\Bbb T_L\Cal B_R$. Then,
the map $\Bbb T_L:\Cal B_R\to\Bbb B^{tr}$ is one-to-one and we can
consider the lift of the solution semigroup $\{S(t),\,t\ge0\}$ to
the $L$-trajectory phase space:
\begin{equation}\label{2.lift}
\Bbb S(t):\Bbb B^{tr}\to\Bbb B^{tr},\ \ \ \Bbb S(t):=\Bbb T_L\circ
S(t)\circ\Bbb T_L^{-1}
\end{equation}
The idea is now to verify the existence of the global attractor
$\Bbb A$ of the $L$-trajectory semigroup $\Bbb S(t)$ (more
precisely, of the discrete semigroup generated by the map $\Bbb
S(L)$) in the space $\Bbb B^{tr}$ instead of studying
 the initial semigroup $S(t)$. Indeed, although the map $\Bbb
 T_L^{-1}:\Bbb B^{tr}\to\Cal B_h$ is not continuous, it is not
 difficult to check that the map $T_L^{-1}\circ\Bbb S(L)$ is even Lipschitz
continuous. To this end, we need to write estimate \eqref{1.lip}
in the form
$$
\|\xi_v(L)\|_{\Cal E_{-1}}^2\le
Ce^{K(L-s)}\|\xi_v(s)\|^2_{\Cal E_{-1}}
$$
and integrate it over $s\in[0,L]$. By this reason, it is really
sufficient to prove the existence of a finite-dimensional
attractor $\Bbb A$ for the discrete semigroup generated by the map
$\Bbb S(L)$ only. The desired finite-dimensional global attractor
 $\Cal A$ of the initial semigroup can be
found then via
$$
\Cal A=\Bbb T_{L}^{-1}\circ\Bbb S(L)\Bbb A
$$
and the description \eqref{2.str} is a standard corollary of  the
attractor's
existence, see \cite{BV}.
\par
Finally, for proving the attractor existence for the map $\Bbb S(L)$,
we note that the basic estimate \eqref{2.Ltr} implies the
following version of the squeezing property for the map $\Bbb
S(L)$:
\begin{equation}\label{2.sqz}
\|\Bbb S(L)\xi_1-\Bbb S(L)\xi_2\|_{\Cal H_1}^2\le
\frac{C_1}L\|\xi_1-\xi_2\|_{\Cal H_1}^2+C_L(\|\xi_1-\xi_2\|_{\Cal
H}^2+\|\Bbb S(L)\xi_1-\Bbb S(L)\xi_2\|_{\Cal H}^2),
\end{equation}
where $\xi_1,\xi_2\in\Bbb B^{tr}$ and the constant $C_1$
 is independent of $L$. Indeed, integrating estimate \eqref{2.Ltr}
 over $s\in[0,L]$, and fixing $t\in[L,2L]$, we get
 $$
 L\int_L^{2L}\|\Dt^2 v(\tau)\|^2_{H^{-3}}d\,\tau\le  Ce^{-\kappa
 L}\|v(\cdot)\|^2_{\Cal
 H_1}+CL\int_0^{2L}(\|v(\tau)\|^2_{L^2}+\|\Dt
 v(\tau)\|^2_{H^{-2}})\,d\tau
 $$
 and
 $$
 L\|\xi_v(t)\|^2_{\Cal E^{-1}}\le Ce^{-\kappa(t-L)}
 \|v(\cdot)\|^2_{\Cal H_1}+CL\int_0^{2L}(\|v(\tau)\|^2_{L^2}+
 \|\Dt v(\tau)\|^2_{H^{-2}})\,d\tau.
 $$
 Integrating the last estimate over $t\in[L,2L]$ and using the
 previous one, we deduce \eqref{2.sqz}.
 \par
 Fix now $L$ in such way that $\alpha:=\frac {C_1}L<1$. Then, the
 smoothing property \eqref{2.sqz} together with the obvious
 Lipschitz continuity of the map $\Bbb S(L)$:
 $$
 \|\Bbb S(L)\xi_1-\Bbb S(L)\xi_2\|_{\Cal H_1}\le
 Ce^{KL}\|\xi_1-\xi_2\|_{\Cal H_1},\ \xi_i\in\Bbb B^{tr}
 $$
(which is also an immediate corollary of \eqref{2.Ltr}), imply
in a standard way the existence of the global attractor for
the map $\Bbb S(L)$ and it's finite-dimensionality in the space
$\Cal H_1$, see \cite{MP} (and also \cite{Z2,EFZ}). Theorem
\ref{Th2.fd} is proved.
\end{proof}

\begin{remark}\label{Rem2.noexp} Note that, usually, the above
described method of $L$-trajectories gives not only the existence
of a global attractor, but also of the {\it exponential} attractor
for the solution semigroup $S(t)$. However, in our case $\Cal E_{-1}$ is not compactly embedded in $\Cal E$ and an energy
solution is not H\"older continuous in time. This obstacle does
not allow to pass from the exponential attractor for the discrete
map $S(L)$ (which is factually constructed by the $L$-trajectory
technique) to the desired exponential attractor of the continuous semigroup
$S(t)$. We will overcome this obstacle below using the additional
smoothness of the global attractor.
\end{remark}

\begin{remark}\label{Rem2.batr} The space $\Cal
E_{-1}:=H^1_0(\Omega)\times H^{-1}(\Omega)$ in Theorem
\ref{Th2.fd} can be replaced by the better one $\tilde{\Cal
E}:=H^1_0(\Omega)\times L^2(\Omega)$. Indeed, due to Proposition
\ref{Prop1.dtsmooth}, we know that $\Dt u(t)\in H^1_0(\Omega)$ for
$t\ge0$. Using this fact and the proper interpolation inequality,
we obtain the following H\"older continuity:
$$
\|S(1)\xi_1-S(1)\xi_2\|_{\tilde{\Cal E}}\le
 C\|\xi_1-\xi_2\|_{\Cal E_{-1}}^{1/2},\ \xi_1,\xi_2\in\Cal B_R
$$
which immediately gives that result. However, we do not know how
to prove that the attractor $\Cal A$ attracts bounded sets in a
strong topology of the phase space $\Cal E$. For the particular
semilinear case ($\phi''(\Nx u)=const$ and $f(u)$ with an
 arbitrary growth rate) this result is established
in Section \ref{s4}.
\end{remark}

\section{Strong solutions: a priori estimates, existence and
dissipativity}\label{s2}
This section is devoted to strong solutions of equation
\eqref{1.main}. We start with the formal derivation of a
dissipative estimate in the space $\Cal E_1$ which will be
justified below. To this end, it is convenient to relax slightly
our assumptions on the non-linearity $\phi$  (we are
planning to prove the existence of a strong solution by approximating the growing non-linearities by the
non-growing ones and, by this reason, we need to allow the
non-growing non-linearities). Namely, we replace our assumptions
on $\phi$ by
\begin{equation}\label{3.phi}
\begin{cases}
1)\ \ \kappa_1|\phi''(v)|\cdot|w|^2\le \phi''(v)w.w\le
 \kappa_2(1+|\phi''(v))|)\cdot|w|^2,\ \ w\in\R^3,\\
2)\ \ |\phi''(v)|\cdot|v|^2\le C(1+\phi(v)),\\
3)\ \ \phi(v)\le C(1+\phi'(v). v),\\
4)\ \ |\phi'(v)|\le C(1+\phi(v))^{1/2}|\phi''(v)|^{1/2}.\\
\end{cases}
\end{equation}
Since we do not have now any growth restrictions on $\phi$, we
need to modify the energy norm:
$$
\|\xi_u\|_{\Cal E_\phi}^2:=\|\Dt u\|^2_{L^2}+\|\Nx
u\|^2_{L^2}+\|u\|^{q+2}_{L^{q+2}}+\|\phi(\Nx u)\|_{L^1}.
$$
The next lemma gives the formal dissipative estimate for the
$H^2$-norm of a strong solution $u$.

\begin{lemma}\label{Lem3.h2} Let the above asumptions hold and let
$u$ be a sufficiently regular solution of problem \eqref{1.main}.
Then, the following estimate holds:
\begin{multline}\label{3.h2}
\|u(t)\|_{W^{2,2}}^2+\int_t^{t+1}(|\phi''(\Nx u)|,|D^2_x u(s)|^2 )\,ds\le\\
\le Q(\|\xi_u(0)\|_{\Cal E_\phi}+\|u(0)\|_{W^{2,2}})e^{-\beta
t}+Q(\|g\|_{L^2}),
\end{multline}
where $D^2_xu$ means a collection of all second derivatives
$\partial_{x_i}\partial_{x_j}u$ of the function $u$) and the
monotone function $Q$ and the exponent $\beta>0$ depend only on
the constants $\kappa_i$ and from \eqref{3.phi} (and independent of
the concrete choice of $\phi$ satisfying these conditions).
\end{lemma}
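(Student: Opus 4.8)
The plan is to obtain the $W^{2,2}$-estimate by multiplying equation \eqref{1.main} by $-\Dx u$ and integrating over $\Omega$, since this is the standard device that produces the full collection of second derivatives $D^2_x u$ on the left-hand side. The two terms that require care are the principal elliptic term and the quasi-linear term. For the term $(-\Dx u,-\Dx u)=\|\Dx u\|^2_{L^2}$, together with the Dirichlet boundary condition and elliptic regularity on the smooth bounded domain $\Omega$, one recovers control of the full $W^{2,2}$-norm: $\|u\|^2_{W^{2,2}}\le C(\|\Dx u\|^2_{L^2}+\|u\|^2_{L^2})$. The time-derivative terms $\Dt^2 u$ and $\gamma\Dt\Dx u$, after multiplication by $-\Dx u$ and integration by parts, should produce the time-derivative of a good energy-type quantity (controlling $\|\Nx\Dt u\|^2_{L^2}$ and $\|\Dx u\|^2_{L^2}$) plus lower-order terms that are absorbed using the already-established energy estimate \eqref{1.energy}.

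The crucial and most delicate step is the treatment of the quasi-linear term $(\Nx\cdot\phi'(\Nx u),-\Dx u)$. The naive approach is to integrate by parts, which moves a derivative onto $\phi'(\Nx u)$ and produces $(\phi''(\Nx u)\Nx\Nx u,\Nx\Nx u)=(\phi''(\Nx u)D^2_x u,D^2_x u)$, exactly the coercive dissipative term $(|\phi''(\Nx u)|,|D^2_x u|^2)$ appearing in \eqref{3.h2}; here assumption \eqref{3.phi}(1) guarantees the needed lower bound $\kappa_1|\phi''(\Nx u)||D^2_x u|^2\le\phi''(\Nx u)D^2_x u\cdot D^2_x u$. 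The difficulty is that this integration by parts generates boundary terms on $\partial\Omega$ which, for growing $\phi''$, cannot be controlled directly. As the introduction announces (see the discussion preceding the statement), I would avoid estimating these boundary terms head-on and instead employ a nonlinear localization technique: multiply not by $-\Dx u$ globally but localize using a cutoff, or equivalently rewrite the quasi-linear term so the boundary contribution is handled by the structural assumptions \eqref{3.phi}(2)--(4). The role of \eqref{3.phi}(4), $|\phi'(v)|\le C(1+\phi(v))^{1/2}|\phi''(v)|^{1/2}$, is precisely to bound any leftover gradient-of-$\phi'$ terms by a small multiple of the coercive term $(|\phi''|,|D^2_x u|^2)$ times the already-controlled energy $\|\phi(\Nx u)\|_{L^1}$, thereby absorbing them.

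The nonlinearity $f(u)$ contributes $(f(u),-\Dx u)=(f'(u)\Nx u,\Nx u)=(f'(u),|\Nx u|^2)$ after integration by parts, and since $f'(s)\ge -C$ by \eqref{3.int}, this term is bounded below by $-C\|\Nx u\|^2_{L^2}$, which is under control by the energy estimate; the upper growth of $f'$ is not needed here because the term has a favorable sign up to the lower-order constant. The forcing $(g,-\Dx u)$ is estimated by Cauchy--Schwarz and Young's inequality, absorbing $\eb\|\Dx u\|^2_{L^2}$ into the coercive elliptic term and leaving $C_\eb\|g\|^2_{L^2}$. Assembling all contributions, I would arrive at a differential inequality of the form $\frac d{dt}\Phi(t)+\beta\Phi(t)+\beta(|\phi''(\Nx u)|,|D^2_x u|^2)\le Q(\|\xi_u\|_{\Cal E_\phi})$ for a functional $\Phi$ equivalent to $\|u\|^2_{W^{2,2}}$ up to lower-order energy terms, where the right-hand side is controlled via \eqref{1.energy}.

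Finally, I would apply the Gronwall inequality to this differential inequality, using the dissipative energy estimate \eqref{1.energy} to bound the right-hand side uniformly in time and to supply the exponential decay of the initial-data contribution. This yields exactly \eqref{3.h2}, with the integral term $\int_t^{t+1}(|\phi''(\Nx u)|,|D^2_x u(s)|^2)\,ds$ coming from integrating the coercive dissipation term over a unit time interval. I expect the main obstacle to be the rigorous implementation of the nonlinear localization so that the boundary terms are genuinely eliminated rather than merely estimated; this is where the structural conditions \eqref{3.phi}(2)--(4) must be invoked with care, and where the present approach departs from the direct boundary estimates of \cite{En2}.
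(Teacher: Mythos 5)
Your skeleton (multiply by $\Dx u$, coercivity of the quasilinear term from \eqref{3.phi}(1), the sign of $f'$, Young's inequality for $g$, then Gronwall with the energy estimate \eqref{3.phien}) matches the paper's, and you correctly identify the boundary terms generated by integrating by parts in $(\Nx\cdot\phi'(\Nx u),\Dx u)$ as the crux. But precisely at that point your proposal stops being a proof: ``localize with a cutoff, or equivalently rewrite the quasi-linear term so the boundary contribution is handled by \eqref{3.phi}(2)--(4)'' does not describe a working mechanism. A cutoff $\theta$ vanishing near $\partial\Omega$ does eliminate the boundary terms, but it also eliminates all information about $D^2_x u$ near the boundary, so it yields only the \emph{interior} bound --- this is exactly the paper's first step \eqref{1.h2int}, where the cutoff errors are absorbed via $|\theta'|+|\theta''|\le C\theta^{1/2}$ and assumption \eqref{3.phi}(4) as in \eqref{3.stan}. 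And the structural conditions \eqref{3.phi}(2)--(4) are pointwise bounds relating $\phi$, $\phi'$ and $\phi''$; they cannot tame boundary integrals involving the trace of $\phi'(\Nx u)$ paired with second derivatives of $u$, since neither trace is controlled by the energy, so no absorption of the kind you sketch is available there.

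What is missing are the two boundary steps that make the paper's argument close. First, near $\partial\Omega$ one multiplies by $\partial_{\tau}^*\partial_{\tau}u$ for tangential vector fields $\tau$: since $\partial_\tau u$ vanishes on $\partial\Omega$, the integration by parts in the quasilinear term is genuinely boundary-term-free, and the commutators between $\partial_\tau$ and $\Nx$ are lower order, controlled again through \eqref{3.phi}(4) as in \eqref{1.tan}; this gives the control \eqref{1.h2tan} of $\Nx D_\tau u$. Second --- and this is the step your plan cannot reproduce --- the remaining derivative $\partial_n^2u$ is recovered with \emph{no integration by parts at all}: one multiplies by $\Dx u$ and uses the algebraic identities $\Dx u=\partial_n^2u+(\text{already controlled terms})$ and $\Nx\cdot\phi'(\Nx u)=\bigl(\sum_{ij}\phi''_{ij}(\Nx u)A_{i3}A_{j3}\bigr)\partial_n^2u+(\text{already controlled terms})$, together with the ellipticity $\sum_{ij}\phi''_{ij}(\Nx u)A_{i3}A_{j3}\ge\kappa|\phi''(\Nx u)|$ from \eqref{3.phi}(1), so that $(\Nx\phi'(\Nx u),\Dx u)$ is coercive in $\partial_n^2u$ modulo the interior and tangential pieces already estimated, see \eqref{1.norm}. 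Without this normal-direction argument your differential inequality cannot close: the dissipative term $(|\phi''(\Nx u)|,|D^2_xu|^2)$ in \eqref{3.h2} includes $|\partial_n^2u|^2$, which no global integration by parts can produce without precisely the uncontrollable boundary terms you flagged.
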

\begin{proof}
The key idea here is to multiply (analogously to the semi-linear
 case, see e.g., \cite{PZ,K88}) the
equation \eqref{1.main} by $\Dx u$ and the main difficulty is the
quasi-linear term. In the case of {\it periodic} boundary
conditions not any additional problems arise, since
\begin{multline}\label{3.conv}
\sum_i(\Nx\phi'(\Nx u),\partial_{x_i}^2 u)=\sum_{i,j}(\partial_{x_i} \phi'_j(\Nx
u),\partial_{x_i}\partial_{x_j} u)=\\=\sum_i(\phi''(\Nx u)\partial_{x_i}\Nx
u,\partial_{x_i}\Nx u)\ge \kappa(|\phi''(\Nx u)|,|D^2_xu|^2),
\end{multline}
where we have used the first inequality of \eqref{3.phi}. However, this does not work directly for the
case of Dirichlet boundary conditions due to the appearance of uncontrollable
boundary terms under the integration by parts.
\par
In order to overcome this difficulty, we will use some kind of
localization technique. First, we deduce the $H^2$-estimate {\it
inside} of the domain $\Omega$. To this end, we multiply equation
\eqref{1.main} by $\Nx(\theta(x)\Nx u)$ where $\theta(x)$ is a cut-off
function which equals zero near the boundary and one in the
$\delta$-interior of the domain $\Omega$ and satisfies the
inequality
$$
|\theta''(x)|+|\theta'(x)|\le C[\theta(x)]^{1/2}.
$$
 Then, analogously to
\eqref{3.conv}, we will have
\begin{multline}\label{1.int1}
(\Nx\phi'(\Nx u),\Nx(\theta\Nx u))=\sum_{i,j}(\partial_{x_i}\phi'_j(\Nx u)),\partial_{x_i}(\theta\partial_{x_j} u))\ge
\\\ge\kappa(\theta|\phi''(\Nx u)|,|D^2_x u|^2)-C(|\phi'(\Nx u)|,(|\theta''|+|\theta'|)|D^2_x u|)\ge\\\ge 1/2\kappa(\theta|\phi''(\Nx u)|,|D^2
u|^2)-C_1(1+\|\phi(\Nx u)\|_{L^{1}}).
\end{multline}
Here we
 have used assumption \eqref{3.phi}(4) and the following
estimate
\begin{multline}\label{3.stan}
(|\phi'(\Nx u)|,(|\theta''|+|\theta'|) |w|)=\\=
(|\phi''(\Nx v)|^{-1/2}|\phi'(\Nx u)|,(|\theta''|+|\theta'|)|\phi''(\Nx u)|^{1/2}\cdot|w|)
\le\\\le (|\phi''(\Nx
u)|^{-1}\cdot|\phi'(\Nx u)|^2,1)^{1/2}((|\theta''|^2+|\theta'|^2)|\phi''(\Nx u)|,|w|^2)^{1/2}\le\\\le
1/2\kappa(\theta|\phi''(\Nx u)|,|w|^2)+C(1+\|\phi(\Nx u)\|_{L^1}).
\end{multline}
The other terms are much simpler to estimate. For instance, due to
the quasi-monotonicity assumption $f'(u)\ge-K$, the other nonlinear
term  factually disappears
$$
-(f(u),\Nx(\theta\Nx u))=(\theta f'(u)\Nx u,\Nx u)\ge -K\|\Nx
u\|^2_{L^2}
$$
and the linear terms can be estimated as follows:
\begin{multline}
(\Dt\Dx u,\Nx(\theta\Nx u))=\frac12\Dt(\theta,|\Dx u|^2)-(\Dt\Nx
u,\Nx(\theta'\Nx u)\ge\\\ge\frac12\Dt(\theta,|\Dx u|^2)-C_\eb\|\Dt\Nx
u\|^2-\eb(\theta,|\Dx u|^2)-C\|\Nx u\|^2_{L^2}
\end{multline}
and
\begin{multline}
-(\Dt^2 u,\Nx(\theta\Nx u))=(\Dt^2\Nx u,\theta\Nx u)=\Dt (\Dt \Nx
u,\theta\Nx u)-\\-(\theta\Dt\Nx u,\Dt\Nx u)\ge-\Dt (\Dt
u,\Nx(\theta\Nx u))-C\|\Dt\Nx u\|^2.
\end{multline}
Thus, combining the above estimates, we arrive at
\begin{multline}\label{3.gr}
\frac d{dt}[\frac\gamma2(\theta,|\Dx u|^2)-(\Dt u,\Nx(\theta(\Nx u))]+
[(\theta,|\Dx u|^2)-(\Dt u,\Nx(\theta(\Nx u))]+\\+\frac12\kappa(\theta|\phi''(\Nx u)|,|D^2_x u|^2 )\le
  C(1+\|\phi(\Nx u)\|_{L^1}+\|\Dt\Nx u\|^2_{L^2}+\|\Nx u\|^2_{L^2}).
\end{multline}
Applying the Gronwall inequality to this relation and using the
following
analog of the dissipative estimate \eqref{1.energy}
\begin{multline}\label{3.phien}
\|\Dt u(t)\|^2_{L^2}+\|u(t)\|^2_{H^2}+\|\phi(\Nx u(t))\|_{L^1}+\|u(t)\|^{q+2}_{L^{q+2}}+\int_t^{t+1}\|\Dt\Nx
u(s)\|^2_{L^2}\,ds\le\\\le
Q(\|\xi_u(0)\|_{\Cal E_\phi})e^{-\gamma t}+C(1+\|g\|^2_{L^2})
\end{multline}
 in order to
control  the right-hand side of \eqref{3.gr}, we deduce the required $H^2$-estimate inside of the
domain $\Omega$:
\begin{multline}\label{1.h2int}
\|\theta u(t)\|_{W^{2,2}}^2+\int_t^{t+1}(\theta|\phi''(\Nx u)|,|D^2_x u(s)|^2)\,ds\le
\\ \le Q(\|\xi_u(0)\|_{\Cal E_\phi}+\|u(0)\|_{W^{2,2}})e^{-\beta
t}+Q(\|g\|_{L^2}).
\end{multline}
At the next step, we consider the neighborhood of the boundary
$\partial\Omega$. To this end, we extend the tangent ($\tau_1=(\tau^1_1,\tau_2^2,\tau_1^3)$ and
$\tau_2=(\tau_2^1,\tau_2^2,\tau_2^3)$) and normal ($n=(n^1,n^2,n^3)$) vector fields from the boundary
$\partial\Omega$ inside of the domain $\Omega$ in such way that
the basis $(\tau_1(x),\tau_2(x),n(x))$ will be orthonormal at least in a
small neighborhood of the boundary
(being pedantic, in a small neighborhood of an
 arbitrarily fixed point $x_0\in\partial\Omega$; outside of this neighborhood, the above vector fields should be cutted off using,
 e.g., the scalar cut-off multiplier $\theta(x)$). Let $\partial_{\tau_i}:=\sum_{j=1}^3\tau_i^j(x)\partial_{x_j}$ and
$\partial_n:=\sum_{j=1}^3n^j(x)\partial_{x_j}$ be the associated differential
operators. Let also $\partial^*_{\tau_i}$, $i=1,2$, be the
formally adjoint (in $L^2(\Omega)$) differential operators, i.e.,
$$
(\partial_{\tau_i}^*v)(x):=-\sum_{j=1}^3\partial_{x_j}(\tau_i^j(x)v(x)).
$$
\par
We now multiply the equation by the term
$$
-\partial_{\tau_i}^*\partial_{\tau_i}
u=\sum_{j,k}\partial_{x_k}(\tau_i^k(x)\tau_i^j(x)\partial_{x_j}u).
$$
Then, since
$D_\tau u\big|_{\partial\Omega}=0$ (here and below,  $D_\tau$ stands for the one of tangent
derivatives $\partial_{\tau_1}$ or $\partial_{\tau_2}$), we do not have any
boundary terms under the integrating by parts in the quasi-linear
term. Moreover, we also have
$$
|\partial_{\tau_i}\Nx F-\Nx\partial_{\tau_i} F|\le C|\Nx F|.
$$
By this reason, analogously to \eqref{1.int1} and \eqref{3.stan}, we can estimate the quasi-linear term by
\begin{multline}\label{1.tan}
-(\Nx\phi'(\Nx u),D_{\tau}^*D_{\tau} u)=-(D_{\tau}\Nx\phi'(\Nx u),D_{\tau}u)\ge
\\\ge-(\Nx D_{\tau}\phi'(\Nx
u),D_{\tau}u)-C(|\phi'(\Nx u)|,|\Nx D_{\tau} u|)=\\
=(D_{\tau}\phi'(\Nx u),\Nx D_{\tau} u)-
C(|\phi'(\Nx u)|,|\Nx D_{\tau} u|)\ge\\\ge
\kappa(|\phi''(\Nx u)|,|\Nx D_\tau u|^2)-
\\-C(|\phi'(\Nx u)|,|\Nx D_\tau u|)\ge
 1/2\kappa(|\phi''(\Nx u)|,|\Nx D_\tau u|^2)-C_1(1+\|\phi(\Nx u)\|_{L^1}).
\end{multline}
The linear terms are easier to estimate, in particular,
$$
-(\Dt\Dx u,D_\tau^*D_\tau u)\ge \frac12\frac d{dt}\|\Nx
D_\tau\|^2_{L^2}-C\|\Dt\Nx u\|_{L^2}\|\Nx D_\tau u\|_{L^2}.
$$
Then, arguing exactly as in the derivation of \eqref{1.h2int}, we obtain the control of second derivatives of $u$ in
tangential directions:
\begin{multline}\label{1.h2tan}
\|\Nx D_\tau u(t)\|_{L^2}^2+\int_t^{t+1}(|\phi''(\Nx u(s))|,|\Nx D_\tau u(s)|^2,)\,ds\le\\
\le Q(\|\xi_u(0)\|_{\Cal E_\phi}+\|u(0)\|_{W^{2,2}})e^{-\beta t}+Q(\|g\|_{L^2}).
\end{multline}
Thus, we only need to estimate the second normal derivatives. To
this end, we multiply the equation \eqref{1.main} by $\Dx u$ and,
instead of integrating by parts in the quasi-linear term, will now
use the fact that all of the second derivatives except of $\partial_n^2 u$
are already under the control (due to \eqref{1.h2tan} and
\eqref{1.h2int}). More precisely, let $n:=\tau_3$ and
$$
\partial_{x_i}=A_{ik}(x)\partial_{\tau_k},\ \ i,k=1,2,3.
$$
Then, since the vector fields $(\tau_1,\tau_2,n)$ are
orthonormal (at least near the boundary), we may conclude that
$$
|\Dx u-\partial^2_n u|\le C(|\Nx D_\tau|^2+|\Nx
u|^2+|u|^2+|\theta D^2_x u|^2),
$$
where  $\theta$
is the cut-off function defined above. Moreover, analogously
\begin{multline*}
|\Nx\phi'(\Nx u)-(\sum_{ij}\phi''_{ij}(\Nx u)A_{i3}A_{j3})\partial^2_n
u|\le C(|\phi''(\Nx u)|\cdot|\Nx D_{\tau} u|^2+\\+\theta|\phi''(\Nx
u)|\cdot|D^2_x u|^2+|\phi''(\Nx u)|\cdot|\Nx u|^2)
\end{multline*}
and, due to \eqref{3.phi}(1)
$$
\sum_{ij}\phi''_{ij}(\Nx u)A_{i3}A_{j3}\ge\kappa|\phi''(\Nx u)|
$$
(at least in the small neighborhood of the boundary).
Using that estimates, we finally infer
\begin{multline}\label{1.norm}
(\Nx\phi'(\Nx u),\Dx u)\ge \kappa(|\phi''(\Nx u)|,|\partial_n^2 u|^2)-
C(|\phi''(\Nx u)|,|\Nx D_{\tau} u|^2)-\\-
C(1+\|\phi(\Nx u)\|_{L^1})-C(\theta|\phi''(\Nx u)|,|D^2_{x} u|^2).
\end{multline}
Using now estimates \eqref{1.h2tan} and
\eqref{1.h2int} together with \eqref{3.phien} in order to control
the subordinated terms, we get
\begin{multline*}
\|\Dx u(t)\|^2_{L^2}+\int_{t}^{t+1}(|\phi''(\Nx
u(s))|,|\partial_n^2 u(s)|^2)\,ds\le\\\le Q(\|\Dx
u(0)\|_{L^2}+\|\xi_u(0)\|_{\Cal E_\phi})e^{-\beta t}
+C(1+\|g\|^2_{L^2})
\end{multline*}
which together with \eqref{1.h2tan} and
\eqref{1.h2int} give the desired estimate \eqref{3.h2} and finish
the proof of the lemma.
\end{proof}

We are now ready to prove the existence of $H^2$-solutions. In
contrast to the previous section, the Galerkin method seems not
applicable here since the multiplication of the equation by
$\partial_{\tau}^*\partial_{\tau} u$ will be problematic on the
level of Galerkin approximations. By this reason, we will proceed
in an alternative way approximating the non-linearity $\phi$ by
the sequence  $\phi_n$ such that $\phi''_n$ are globally bounded.
For this reason, we needed the modified assumptions \eqref{3.phi}.
We formulate the main result of this section in the following
theorem.

\begin{theorem}\label{Th3.strong} Let the nonlinearities $\phi$
and $f$ satisfy assumptions \eqref{3.int} (with arbitrary growth
exponents $p$ and $q$!). Then, for every $\xi_u(0)\in\Cal E$ such
that $u(0)\in H^2(\Omega)$, there exists at least one weak energy
solution $\xi_u(t)$ with the additional regularity $u(t)\in
H^2(\Omega)$ which satisfies the following estimate:
\begin{equation}\label{3.h2en1}
\|\xi_u(t)\|_{\Cal E}+\|u(t)\|_{H^2}\le Q(\|\xi_u(0)\|_{\Cal
E}+\|u(0)\|_{H^2})e^{-\beta t}+Q(\|g\|_{L^2}),
\end{equation}
for some positive constant $\beta$ and monotone function $Q$.
\end{theorem}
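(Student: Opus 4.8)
The plan is to prove the theorem by a nonlinear approximation of the (possibly strongly growing) potential $\phi$, for which the formal $H^2$-estimate of Lemma \ref{Lem3.h2} becomes fully rigorous, followed by a compactness argument. The crucial point, already emphasized in the statement of Lemma \ref{Lem3.h2}, is that the constants in \eqref{3.h2} depend only on the structural constants $\kappa_i$ from the reformulated hypotheses \eqref{3.phi}, and not on the particular choice of $\phi$; thus the whole problem reduces to building a good approximating sequence and to passing to the limit in the quasi-linear term.

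First I would construct a sequence $\phi_n\in C^2(\R^3)$ with globally bounded Hessians, $\|\phi_n''\|_{L^\infty}<\infty$, such that $\phi_n$ together with $\phi_n'$ and $\phi_n''$ converge to $\phi,\phi',\phi''$ locally uniformly, such that, for each fixed $n$, $\phi_n$ satisfies \eqref{1.int} with exponent $p=1$ (so it is uniformly elliptic with bounded second derivative, with constants that may depend on $n$), and such that all $\phi_n$ satisfy \eqref{3.phi} with the same constants $\kappa_i$ uniformly in $n$. A natural choice keeps $\phi_n=\phi$ on the ball $\{|\eta|\le n\}$ and continues it by a quadratic-type function with capped Hessian for $|\eta|>n$; one checks directly that the growing $\phi$ (which satisfies \eqref{1.int}, hence also \eqref{3.phi}) admits such extensions preserving \eqref{3.phi}. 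This is precisely the step for which the reformulated assumptions \eqref{3.phi} were introduced, so that the non-growing approximations remain admissible.

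Second, for each fixed $n$ the approximate problem, i.e. \eqref{1.main} with $\phi$ replaced by $\phi_n$, has a unique weak energy solution $u_n$ by Theorems \ref{Th1.dis} and \ref{Th1.uniq}, which apply because $\phi_n$ satisfies \eqref{1.int} with $p=1<5$. Since $\phi_n''$ is globally bounded, the map $\Nx\cdot\phi_n'(\Nx u)$ is globally Lipschitz from $H^1_0$ into $H^{-1}$, and treating the integro-differential reformulation \eqref{0.funny} as a perturbed heat equation yields $W^{2,s}$-regularity of $u_n$ with $s>3$; in particular $u_n$ is regular enough that all the multiplications performed in the proof of Lemma \ref{Lem3.h2} (by $\Dx u$, by $\Nx(\theta\Nx u)$, and by the tangential multiplier $\partial_\tau^*\partial_\tau u$) are legitimate. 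Consequently Lemma \ref{Lem3.h2}, together with the energy estimate \eqref{3.phien} (equivalently \eqref{1.energy}), applies to each $u_n$ and gives bounds, uniform in $n$ and $t$, for $u_n$ in $L^\infty(H^2\cap H^1_0)$, for $\Dt u_n$ in $L^\infty(L^2)$ and $L^2_{loc}(H^1_0)$, and for the weighted quantity $(|\phi_n''(\Nx u_n)|,|D^2_x u_n|^2)$ in $L^1_{loc}$.

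Finally, I would pass to the limit $n\to\infty$. The uniform bounds yield a subsequence with $u_n\rightharpoonup u$ weakly-$*$ in $L^\infty(H^2\cap H^1_0)$ and $\Dt u_n\rightharpoonup\Dt u$ correspondingly; since $H^2$ is compactly embedded in $H^1$ and $\Dt u_n$ is bounded in $L^2(L^2)$, the Aubin--Lions lemma gives strong convergence $u_n\to u$ in $L^2_{loc}(H^{2-\eps})$, whence $\Nx u_n\to\Nx u$ in $L^2_{loc}$ and, along a further subsequence, almost everywhere. The linear terms, and (using $H^{2-\eps}\subset C(\bar\Omega)$ together with the $L^{q+2}$-energy bound) the term $f(u_n)$, pass to the limit routinely. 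The hard part is identifying the limit of the quasi-linear term $\Nx\cdot\phi_n'(\Nx u_n)$, because no growth restriction on $p$ is assumed and $\phi_n'(\Nx u_n)$ is a priori only $L^1$-bounded. Here I would exploit the weighted estimate: on the region where $\phi_n=\phi$ the lower bound in \eqref{1.int} gives $|\phi_n''(\Nx u_n)|\gtrsim|\Nx u_n|^{p-1}$, so the uniform control of $(|\phi_n''(\Nx u_n)|,|D^2_x u_n|^2)$ bounds $\Nx(|\Nx u_n|^{(p-1)/2}\Nx u_n)$ in $L^2$ and hence, by Sobolev embedding in $3$D, provides integrability of $\Nx u_n$ strictly beyond the $L^6$ furnished by the energy. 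This yields a uniform $L^r$-bound with $r>1$ on $\phi_n'(\Nx u_n)$, which together with the almost-everywhere convergence upgrades, via Vitali's theorem, to $\phi_n'(\Nx u_n)\to\phi'(\Nx u)$ in $L^1_{loc}$, enough to pass to the limit in the distributional formulation; alternatively, a Minty--Browder monotonicity argument based on Lemma \ref{Da} identifies the limit. The resulting $u$ is a weak energy solution of \eqref{1.main} with $u(t)\in H^2(\Omega)$, and the dissipative estimate \eqref{3.h2en1} follows from the uniform bounds by weak lower semicontinuity, since $\|\xi_u(t)\|_{\Cal E}+\|u(t)\|_{H^2}\le\liminf_n(\|\xi_{u_n}(t)\|_{\Cal E}+\|u_n(t)\|_{H^2})$ while the right-hand sides of \eqref{3.h2} and \eqref{3.phien} are uniform in $n$.
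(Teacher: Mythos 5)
Your proposal is correct in substance and follows the same skeleton as the paper's proof: approximate $\phi$ by nonlinearities with globally bounded Hessian chosen so that the relaxed hypotheses \eqref{3.phi} hold uniformly in $n$ (this is exactly why \eqref{3.phi} was introduced), apply Lemma \ref{Lem3.h2} to get $n$-independent $H^2$-bounds, and pass to the limit via weak-$*$ compactness, a.e.\ convergence of $\Nx u_n$, and Fatou. The genuine differences are local. First, the paper's approximants are $\phi_n=\theta_{N_n}(\phi+\eb_n|\cdot|^r)$ with $r>p+1$, engineered precisely so that \eqref{3.bad}(2) gives $\|\phi_n'(\Nx u_n)\|_{L^{(r+1)/r}}\le C$ directly from the energy bound $\|\phi(\Nx u_n)\|_{L^1}\le C$; you instead extract equi-integrability of $\phi_n'(\Nx u_n)$ from the weighted Hessian estimate plus Sobolev embedding — essentially rediscovering \eqref{3.ad1}--\eqref{3.ad2}, which the paper records in a remark and deploys only for the critical case $p=5$. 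Be careful with your phrase ``on the region where $\phi_n=\phi$'': for your truncated approximants the weighted estimate only controls $(\min(|\Nx u_n|,n)^{p-1},|D^2_x u_n|^2)$, so the bootstrap must be run with the truncated weight (it survives: on $\{|\Nx u_n|>n\}$ the capped quantities still yield a uniform $L^r$-bound, $r>1$, on $\phi_n'(\Nx u_n)$), and your Minty--Browder fallback via Lemma \ref{Da} covers this in any case. Second, you justify the formal multiplications of Lemma \ref{Lem3.h2} through the integro-differential reformulation \eqref{0.funny} (legitimate here since $\phi_n''$ is bounded), while the paper argues that all terms of the equation lie in $L^2$ using Proposition \ref{Prop1.dtreg}; both work. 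Third, because the paper's $\phi_n$ contains $\eb_n|\cdot|^r$ with $r>p+1$, the initial energy $\|\xi_u(0)\|_{\Cal E_{\phi_n}}$ need not converge for raw $\Cal E$-data, forcing the paper into a two-tier approximation (first $C^2$ initial data, then general $H^2$ data); your construction, for which $\phi_n\le C(1+\phi)$ pointwise (the quadratic continuation is dominated by $\phi$ since $p\ge1$), controls $\|\xi_u(0)\|_{\Cal E_{\phi_n}}$ uniformly by $\|\xi_u(0)\|_{\Cal E_\phi}$ and thus lets you skip that extra step — but you should state this explicitly, since the right-hand side of \eqref{3.h21} is phrased in terms of the $\Cal E_{\phi_n}$-norm of the data.
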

\begin{proof} The proof of solvability is based on
the following observation: if the nonlinearity $\phi(v)$ is such
that $\phi''(v)$ is globally bounded and the initial data is
smooth enough, say $\xi_u(0)\in\Cal E_1$, then, for any $H^2$-solution $u(t)$ all of the terms
in equation \eqref{1.main} belong to $L^2([0,T]\times
L^2(\Omega))$. Indeed, since $\phi''(v)\le C$, then $u\in H^2$
implies that $\Nx\phi'(\Nx u)\in L^2$ and $\Dt^2 u\in
L^2([0,T]\times\Omega)$ due to Proposition \ref{Prop1.dtreg}. Thus, all
terms except of $\Dt\Dx u$ in \eqref{1.main} belong indeed to
$L^2$ and, therefore, the term $\Dt\Dx u$ should also belong to
that space.
\par
By this reason,
we are able to multiply equation \eqref{1.main} by $\Dx u$ and
$\partial_\tau^*\partial_\tau u$ and, therefore, the formal
computations can be justified in a standard way if, in addition,
\begin{equation}\label{3.appinit}
|\phi''(v)|\le C,\ \ \ \xi_u(0)\in\Cal E_{1}.
\end{equation}
Thus, we will approximate the nonlinearity $\phi(v)$  by a sequence
$\{\phi_n(v)\}$ which is globally bounded in such way that
assumptions \eqref{3.phi} hold {\it uniformly} with respect to
$n\to\infty$. Although it is more or less clear that such a sequence
exists, for the convenience of the reader we will briefly describe
below one possible way to construct it.
\par
 At the first step, we note that such a sequence
obviously exists in the particular case
$$
\phi_r(v):=|v|^r,
$$
for any fixed $r\ge2$. For instance, we can take
$\phi_N(v):=\theta_N(\phi(v))$ where the cut-off function is such
that
\begin{equation*}
\theta_N(x)=\begin{cases}x, \ {\rm for}\ \  x\le N;\\
A_N+B_Nx^{2/r},\ \ {\rm for }\ \ x\ge N.
\end{cases}
\end{equation*}
where the coefficients $A_N$ and $B_N$ are such that $\theta_N\in
C^1$ near $x=N$ (being pedantic, the constructed function has a jump of the second derivative
at $x=N$, but it is not essential since all above estimates work for such type of non-linearities $\phi$).
\par
At the second step, we introduce a family of functions
$$
\phi_{\eb}(v):=\phi(v)+\eb\phi_r(v),
$$
where the exponent $r$ is {\it larger} than $p+1$ (recall that the
non-linearities $\phi$ and $f$ satisfy \eqref{1.int} and \eqref{3.int}) and observe
that assumptions \eqref{3.phi} hold for the functions
$\phi_{\eb}(v)$ {\it uniformly} with respect to $\eb\to0$.
\par
Finally, at the third step we construct the desired approximations
$$
\phi_n(v):=\theta_{N_n}(\phi(v)+\eb_n\phi_r(v)),
$$
where the sequence $\eb_n\to0$ (which guarantees the uniform
convergence of $\phi_n$ to $\phi$ at every compact set) and the
sequence $N_n$ tends to infinity fast enough in order to guarantee
that the cut-off  starts at the region where
$$
\eb_n\phi_r(v)\gg\phi(v).
$$
Therefore, in that region, the leading term of $\phi_n(v)$ is
$\theta_{N_n}(\eb_n\phi_r(v))$ for which \eqref{3.phi} are
satisfied {\it uniformly} with respect to $n$. By this reason,
these assumptions will be satisfied also for the perturbed
functions $\phi_n(v)$ and also uniformly with respect to
$n\to\infty$.
\par
Thus, we have constructed the family $\phi_n(v)$ with the
following properties:
\par
1) $\phi_n$ satisfy \eqref{3.phi} uniformly with respect to
$n\to\infty$;

2) $\phi_n(v)\to\phi(v)$ uniformly with respect to all $v\in\R^3$ such that
$|v|\le R$ ($R$ is arbitrary);

3) The following additional inequalities hold:
\begin{equation}\label{3.bad}
1.\  |\phi_n(v)|\le C(1+|v|^r),\ \ \ 2.\ |\phi'_n(v)|\le
C(1+\phi(v))^{r/(r+1)},
\end{equation}
where the constant $C$ is independent of $n$.
\par
We are now ready to verify the existence of the desired solution. To
this end, we first consider the case of regular initial data
$\xi_u(0)\in C^2(\Omega)\times C^2(\Omega)$ and construct a (unique)
approximate solution $u_n$ by solving the following system:
\begin{equation}\label{3.appro}
\Dt^2 u_n-\gamma\Dt\Dx u_n-\Dx u_n+f(u_n)=\Nx\phi_n'(\Nx u_n)+g,\
\xi_{u_n}(0)=\xi_u(0).
\end{equation}
Then, due to Lemma \ref{Lem3.h2}, we have the uniform (with
respect to $n$) estimate
\begin{equation}\label{3.h21}
\|u_n(t)\|_{W^{2,2}}^2
\le Q(\|\xi_u(0)\|_{\Cal E_{\phi_n}}+\|u(0)\|_{W^{2,2}})e^{-\beta t}+Q(\|g\|_{L^2}).
\end{equation}
In particular, since $\xi_u(0)$ is smooth, the first estimate in
\eqref{3.bad} guarantees that
$$
\|\xi_u(0)\|_{\Cal E_{\phi_n}}\to\|\xi_u(0)\|_{\Cal E_\phi}.
$$
It is now not difficult to pass to the limit in equations
\eqref{3.appro}. Indeed, due to the uniform estimate \eqref{3.h2},
we may assume without loss of generality that $\xi_{u_n}$
converges weakly-star to some function $\xi_u$ in the space
$$
L^\infty([0,T],[H^2(\Omega)\cap H^1_0(\Omega)]\times L^2(\Omega))
$$
and we need to prove that $u$ solves \eqref{1.main} by passing
 to the limit in \eqref{3.appro}. As usual, the passage
to the limit in the linear terms are immediate and we only need to
take care on the non-linear ones. Note that the above weak
convergence implies the strong convergence
\begin{equation}\label{3.stcon}
u_n\to u\ \ {\rm in \  the \ space }\ \
C([0,T],H^{2-\eb}(\Omega))\subset C([0,T]\times\Omega)
\end{equation}
and, therefore, $f(u_n)\to f(u)$. So, we only need to check
that
$$
\Nx\phi'_n(\Nx u_n)\to\Nx\phi'(\Nx u)
$$
in the sense of distributions. To this end, we note that, due to
\eqref{3.bad},
$$
\|\phi'(\Nx u)\|_{L^{(r+1)/r}}\le C.
$$
Moreover, from \eqref{3.stcon} we conclude that $\Nx u_n\to\Nx u$
almost everywhere in $[0,T]\times\Omega$ and, therefore,
$\phi'_n(\Nx u_n)\to\phi'(\Nx u)$ almost everywhere. Thus, we have
proved that $\phi'_n(\Nx u_n)\to\phi'(\Nx u)$ in $L^{(r+1)/r}$ and
that $u$ solves indeed equation \eqref{1.main} in the sense of
distributions.
\par
Let us check that the solution $u$ belongs to
$L^\infty([0,T],W^{1,p+1}(\Omega))$ and satisfies the dissipative estimate
\eqref{3.h2en1}. To this end, it is sufficient to note that, due
to the convergence $\phi'_n(\Nx u_n)\to\phi'(\Nx u)$ almost everywhere, the Fatou lemma gives
\begin{equation}\label{3.convp}
\|\phi(\Nx u)\|_{L^1}\le
\liminf_{n\to\infty}\|\phi'_n(\Nx)\|_{L^1}
\end{equation}
which allows to verify \eqref{3.h2en1} by passing to the limit $n\to\infty$ in estimates
\eqref{3.h21} (exactly, in order to pass to the limit in the
right-hand side, we need the additional assumption  on the
initial data to be  smooth).
\par
Thus, we have constructed the desired $H^2$-solution of problem
\eqref{1.main} under the additional assumption that
$$
(u(0),\Dt u(0))\in C^2(\Omega)\times C^2(\Omega).
$$
Finally, this assumption can be easily removed by approximating
the arbitrary $H^2$-initial data by the smooth one and passing to
the limit once more. Theorem \ref{Th3.strong} is proved.
\end{proof}

\begin{remark} Passing to the limit  $n\to\infty$ in the
dissipative estimate \eqref{3.h2} for the approximations $u_n$, we
can prove that the limit solution $u(t)$  also satisfies
\begin{equation}\label{3.ad1}
\int_t^{t+1}(|\phi''(\Nx u(s))|,|D^2_x u(s)|^2)\,ds\le
Q(\|\xi_u(0)\|_{\Cal E}+\|\Dx u(0)\|_{L^2})e^{-\gamma
t}+Q(\|g\|_{L^2})
\end{equation}
which, together with our growth assumptions on $\phi$ and Sobolev
embedding theorem, gives the following control
\begin{equation}\label{3.ad2}
\|\Nx u\|_{L^{p+1}([T,T+1],L^{3(p+1)}(\Omega))}\le Q(\|\xi_u(0)\|_{\Cal E}+\|\Dx u(0)\|_{L^2})e^{-\gamma
t}+Q(\|g\|_{L^2})
\end{equation}
which is important for establishing the uniqueness of strong
solutions for the limit growth exponent $p=5$, see below.
\end{remark}

Note once more that the above proved theorem does not require any
growth restrictions on $\phi$. However, in that case, we can not
guarantee the uniqueness. Combining now Proposition
\ref{Prop1.dtreg} with Theorem \ref{Th3.strong}, we obtain the
following result on the well-posedness and dissipativity of strong
solutions.
\begin{corollary}\label{Cor3.strong} Let the assumptions of Theorem \ref{Th1.uniq}
hold (in particular $p<5$ now). Then, for every $\xi_u(0)\in\Cal
E_1$ (see \eqref{1.e1energy1} for the definition), there exists a
unique strong solution $u(t)$ of problem \eqref{1.main} and the
following estimate holds:
\begin{multline}\label{3.strongdis}
\|\Dt u(t)\|^2_{H^1}+\|\Dt^2 u(t)\|^2_{H^{-1}}+\\+\|\Nx \phi'(\Nx
u(t))\|_{H^{-1}}^2+
\|u(t)\|^2_{H^2}\le Q(\|\xi_u(0)\|_{\Cal E_1})e^{-\beta
t}+Q(\|g\|_{L^2})
\end{multline}
for some positive constant $\beta$ and monotone function $Q$.
\end{corollary}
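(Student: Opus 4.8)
The plan is to assemble the corollary from three results already established: Theorem \ref{Th3.strong} for existence together with the $H^2$-estimate, Proposition \ref{Prop1.dtreg} for the regularity of the $\Dt u$-component, and Theorem \ref{Th1.uniq} for uniqueness (this is where the restriction $p<5$ finally enters). First I would observe that $\xi_u(0)\in\Cal E_1$ (see \eqref{1.e1energy1}) forces $\xi_u(0)\in\Cal E$ with $u(0)\in H^2(\Omega)$, so Theorem \ref{Th3.strong} produces a weak energy solution with the additional regularity $u(t)\in H^2(\Omega)$ satisfying \eqref{3.h2en1}. Since we are in 3D and $p<5$, the embeddings $H^2(\Omega)\subset W^{1,p+1}(\Omega)$ and $H^2(\Omega)\subset C(\Omega)\subset L^{q+2}(\Omega)$ let me dominate $\|\xi_u(0)\|_{\Cal E}$ and $\|u(0)\|_{H^2}$ by $Q(\|\xi_u(0)\|_{\Cal E_1})$; hence \eqref{3.h2en1} already controls $\|u(t)\|_{H^2}^2$ in the required exponential form.

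Next I would upgrade this to control the $\Dt u$ terms via Proposition \ref{Prop1.dtreg}. Its hypotheses \eqref{1.reg} are met: $\Dt u(0)\in H^1_0(\Omega)$ is part of the definition of $\Cal E_1$, while $\Dt^2 u(0)\in H^{-1}(\Omega)$ follows by reading the value of $\Dt^2 u(0)$ off equation \eqref{1.main} at $t=0$. Indeed, $\gamma\Dt\Dx u(0)\in H^{-1}$ since $\Dt u(0)\in H^1_0$, $\Dx u(0)\in L^2\subset H^{-1}$ since $u(0)\in H^2$, the quasilinear term $\Nx\cdot\phi'(\Nx u(0))\in H^{-1}$ by the very definition of $\Cal E_1$, and $f(u(0)),\,g\in L^2\subset H^{-1}$ (for $f(u(0))$ one uses $u(0)\in H^2\subset L^\infty$ so that $f(u(0))\in L^\infty$ despite the arbitrary growth of $f$). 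Applying \eqref{1.dtest} to $v:=\Dt u$, and recalling $\xi_v=(\Dt u,\Dt^2 u)$ and $\Cal E_{-1}=H^1_0\times H^{-1}$, then yields exponential control of $\|\Dt u(t)\|_{H^1}^2+\|\Dt^2 u(t)\|_{H^{-1}}^2$ together with the integral bound on $\int_t^{t+1}\|\Dt^2 u(s)\|_{L^2}^2\,ds$, with the initial-data dependence again absorbed into $Q(\|\xi_u(0)\|_{\Cal E_1})$.

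The remaining $\Cal E_1$-term, $\|\Nx\cdot\phi'(\Nx u(t))\|_{H^{-1}}$, I would recover by solving \eqref{1.main} for the quasilinear term, $\Nx\cdot\phi'(\Nx u)=\Dt^2 u-\gamma\Dt\Dx u-\Dx u+f(u)-g$, and estimating each summand in $H^{-1}$: the first two are controlled by the previous step, $\Dx u$ by $\|u\|_{H^2}$, the reaction term by $\|f(u)\|_{L^2}\le Q(\|u\|_{L^\infty})\le Q(\|u\|_{H^2})$, and $g$ by $\|g\|_{L^2}$. Adding the three exponential estimates yields the dissipative bound \eqref{3.strongdis}. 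Finally, uniqueness is immediate: the constructed strong solution is in particular a weak energy solution, and since $p<5$ Theorem \ref{Th1.uniq} guarantees that the energy solution with the given initial data is unique, so the strong solution is unique as well. The only genuinely delicate point is handling $f(u)$ with its arbitrary polynomial growth, and this is resolved entirely by the three-dimensional embedding $H^2(\Omega)\subset C(\Omega)$, which keeps $f(u)$ bounded in $L^2$ along the strong solution.
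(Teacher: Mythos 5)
Your proposal is correct and follows essentially the same route as the paper: the paper likewise obtains \eqref{3.strongdis} by combining \eqref{3.h2en1} (Theorem \ref{Th3.strong}) with \eqref{1.dtest} (Proposition \ref{Prop1.dtreg}), noting that $\Nx\phi'(\Nx u(0))\in H^{-1}$ from the definition of $\Cal E_1$ guarantees $\Dt^2 u(0)\in H^{-1}$, and conversely recovering $\|\Nx\phi'(\Nx u(t))\|_{H^{-1}}$ by expressing the quasilinear term from equation \eqref{1.main}. Your additional details --- the embeddings $H^2\subset W^{1,p+1}$, $H^2\subset C\subset L^{q+2}$ controlling $\|\xi_u(0)\|_{\Cal E}$ by $Q(\|\xi_u(0)\|_{\Cal E_1})$, the term-by-term verification of \eqref{1.reg}, and uniqueness via Theorem \ref{Th1.uniq} --- merely spell out what the paper leaves implicit.
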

Indeed, estimate \eqref{3.strongdis} is an immediate corollary of
\eqref{1.dtest} and \eqref{3.h2en1} (the assumption  $\Nx\phi'(\Nx u(0))\in H^{-1}$
guarantees that $\Dt^2 u(0)\in H^{-1}$ and Proposition \ref{Prop1.dtreg}
 is indeed applicable; vise versa, the control of the $H^{-1}$-norm of $\Dt^2 u$ obtained from Proposition \ref{Prop1.dtreg}
 together with the $H^2$-estimate allows to control the $H^{-1}$-norm of $\Nx \phi'(\Nx u(t))$).

\begin{remark}\label{Rem3.space} We see that the additional
non-linear condition $\Nx \phi'(\Nx u)\in H^{-1}$ is included to
the definition \eqref{1.e1energy1} of the space $\Cal E_1$ in
order to be able to control the $H^{-1}$-norm of the second time
derivative $\Dt^2 u(t)$ of the solution (which is crucial for our
method). In the case $p\le 3$, the assumption $u\in H^2$
automatically implies that $\phi'(\Nx u)\in L^2(\Omega)$ and,
therefore, $\Nx\phi'(\Nx u)\in H^{-1}$. So, this additional
condition is not necessary if $p\le3$ and the strong energy space
$\Cal E_1$ has a usual form
$$
\Cal E_1=[H^2\cap H^1_0]\times H^1_0.
$$
However, it is not so if the non-linearity $\phi$ grows faster
($p>3$) since in that case the $W^{1,2p}$-norm of the solution is
no more controlled by its $H^2$-norm and the assumption
$\Nx\phi'(\Nx u)\in H^{-1}$ gives the additional information about
the regularity of the solution which can not be obtained from the
linear terms. Indeed, assume that the quasi-linear elliptic
problem
\begin{equation}\label{3.qel}
\Nx\cdot\phi'(\Nx U)=h,\ \ U\big|_{\partial\Omega}=0,\ \ h\in
H^{-1}(\Omega)
\end{equation}
possesses the maximal regularity theorem in the form
\begin{equation}\label{3.elreg}
\|\phi'(\Nx U)\|_{L^2}\le Q(\|h\|_{H^{-1}})
\end{equation}
for every $h\in H^{-1}$ and some monotone function $Q$
(for the particular case $\phi(z):=|z|^{p+1}$, this regularity
result is known to be true, see \cite{AG}). In that case, due to our
growth assumptions on $\phi$, the additional condition $\Nx
\phi'(\Nx u)\in H^{-1}$ is simply equivalent to $u\in
W^{1,2p}(\Omega)$ and the strong energy space $\Cal E_1$ now reads
\begin{equation}\label{3.ssimple}
\Cal E_1=[H^2(\Omega)\cap W^{1,2p}_0(\Omega)]\times H^1_0(\Omega).
\end{equation}
Unfortunately, we do not know whether or not the regularity
estimate \eqref{3.elreg} hold for the general non-linearity $\phi$
satisfying \eqref{1.int} and, by this reason, have to write the
strong energy space $\Cal E_1$ in the form of \eqref{1.e1energy1}.
\end{remark}

To conclude the section, we briefly discuss how to handle the
limit case $p=5$. In that case, for obtaining the uniqueness, we
need to include the control $\Nx u\in L^6([0,T],L^{18}(\Omega))$
(see \eqref{3.ad2}) into the definition of a strong solution.

\begin{proposition}\label{Prop3.limit} Let the assumptions \eqref{1.int} and \eqref{3.int}
be satisfied with $p=5$. Then, the strong solution $u(t)$ of problem
\eqref{1.main} which satisfies the additional regularity
\begin{equation}\label{3.ad3}
\Nx u\in L^6([0,T],L^{18}(\Omega))
\end{equation}
is unique.
\end{proposition}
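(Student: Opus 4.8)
The plan is to run the difference-function argument of the proof of Theorem \ref{Th1.uniq} essentially verbatim, the sole point requiring change being the estimate \eqref{1.phiest} of the quasi-linear term. Recall that there the subcritical interpolation $\|\Nx(-\Dx)^{-1}\Dt v\|_{L^{p+1}}^2\le\eb\|\Dt v\|_{L^2}^2+C_\eb\|\Dt v\|_{H^{-1}}^2$ was used, and this is precisely what breaks down at the critical value $p+1=6$: the embedding $H^1\subset L^6$ still holds, but without the gain of a small absorbing factor $\eb$. First I would set $v:=u_1-u_2$, which solves \eqref{1.difer}, and reproduce \eqref{1.est} (multiplication by $v$) together with \eqref{1.est1} (multiplication by $(-\Dx)^{-1}\Dt v$). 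Exactly as before, the coercive term $((|\Nx u_1|+|\Nx u_2|)^{4},|\Nx v|^2)$ appears with a good sign on the left-hand side, and the contribution of $f$ is handled by \eqref{1.fest} without any modification, since that estimate never used the restriction $p<5$.

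Writing $w:=\Nx(-\Dx)^{-1}\Dt v$, so that $\|w\|_{H^1}\approx\|\Dt v\|_{L^2}$ and $\|w\|_{L^2}\approx\|\Dt v\|_{H^{-1}}$, and using $|\phi'(\Nx u_1)-\phi'(\Nx u_2)|\le C(1+|\Nx u_1|^4+|\Nx u_2|^4)|\Nx v|$ for $p=5$ (Lemma \ref{Da}), I would estimate the quasi-linear term by Cauchy--Schwarz in the form
\[
(|\phi'(\Nx u_1)-\phi'(\Nx u_2)|,|w|)\le C\big(1+|\Nx u_1|^4+|\Nx u_2|^4,|\Nx v|^2\big)^{1/2}\big(1+|\Nx u_1|^4+|\Nx u_2|^4,|w|^2\big)^{1/2}.
\]
The first factor is (the square root of) the coercive quantity, so after Young's inequality a small multiple of it is absorbed on the left. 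The whole difficulty is thereby transferred to the second factor $(1+|\Nx u_1|^4+|\Nx u_2|^4,|w|^2)$, which is exactly where I would invoke the additional regularity \eqref{3.ad3}. By H\"older with $|\Nx u_i|^4\in L^{9/2}$ (since $\Nx u_i\in L^{18}$) and $|w|^2\in L^{9/7}$, i.e. $w\in L^{18/7}$, followed by the interpolation $\|w\|_{L^{18/7}}^2\le C\|w\|_{L^2}^{4/3}\|w\|_{L^6}^{2/3}$ and the critical embedding $H^1\subset L^6$, one obtains
\[
(1+|\Nx u_1|^4+|\Nx u_2|^4,|w|^2)\le C\|\Dt v\|_{H^{-1}}^2+C(\|\Nx u_1\|_{L^{18}}^4+\|\Nx u_2\|_{L^{18}}^4)\|\Dt v\|_{H^{-1}}^{4/3}\|\Dt v\|_{L^2}^{2/3}.
\]

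Combining the two displays and applying Young's inequality once more, with exponents $3$ and $3/2$ on the factor $\|\Dt v\|_{L^2}^{2/3}$, produces $\|\Dt v\|_{L^2}^2$ with an arbitrarily small coefficient, which is then absorbed into the dissipative term $2\gamma\|\Dt v\|_{L^2}^2$ of \eqref{1.est1}; the remaining contribution takes the shape $C(1+\|\Nx u_1\|_{L^{18}}^6+\|\Nx u_2\|_{L^{18}}^6)\|\Dt v\|_{H^{-1}}^2$. This is the decisive point: in contrast to the time-independent constant of Theorem \ref{Th1.uniq}, the Gronwall coefficient now carries the weight $\|\Nx u_i\|_{L^{18}}^6$, which is merely of class $L^1([0,T])$ in time --- and that is precisely what assumption \eqref{3.ad3} guarantees. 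I would thus arrive, for the functional $E_{-1}$ of \eqref{1.e-1}, at a differential inequality $\Dt E_{-1}(v(t))\le m(t)E_{-1}(v(t))$ with $m\in L^1([0,T])$, where $E_{-1}$ is equivalent to $\|\Dt v\|_{H^{-1}}^2+\|v\|_{H^1}^2$ by \eqref{1.2sest}. Since $v(0)=0$, the integral form of Gronwall's lemma forces $E_{-1}(v(t))\equiv0$, hence $u_1\equiv u_2$, which is the asserted uniqueness. The main obstacle, as flagged above, is the loss of the absorbing factor $\eb$ at the critical exponent $p+1=6$; the entire point of the argument is that the missing room is recovered from the extra space--time integrability \eqref{3.ad3}, at the cost of only an $L^1$-in-time (rather than bounded) coefficient in the final Gronwall estimate.
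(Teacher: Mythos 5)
Your proof is correct, and it follows the paper's overall strategy --- rerun the proof of Theorem \ref{Th1.uniq}, observing that $p<5$ entered only through the single term $([\phi'(\Nx u_1)-\phi'(\Nx u_2)],\Nx(-\Dx)^{-1}\Dt v)$, re-estimate that term using \eqref{3.ad3}, and close with a Gronwall lemma whose coefficient is merely $L^1$ in time --- but your treatment of the critical term is genuinely different from the paper's. The paper estimates it in one stroke via the H\"older inequality with exponents $3$, $2$, $6$ (see \eqref{3.new}): $\phi''$ in $L^3$ (i.e.\ $\Nx u_i\in L^{12}$), $\Nx v$ in $L^2$, $w:=\Nx(-\Dx)^{-1}\Dt v$ in $L^6\hookleftarrow H^1$, giving after Young the coefficient $(1+\|\Nx u_1\|_{L^{12}}+\|\Nx u_2\|_{L^{12}})^8$ on $\|\Nx v\|^2_{L^2}$; the integrability in time of this coefficient is then \emph{not} immediate from \eqref{3.ad3} but requires the space-time interpolation $L^\infty([0,T],H^1)\cap L^6([0,T],L^{18})\subset L^8([0,T],L^{12})$, together with the uniform $H^2$-bound of the strong solution. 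You instead perform a $\phi''$-weighted Cauchy--Schwarz, absorb a small multiple of the coercive quantity $((1+|\Nx u_1|+|\Nx u_2|)^{4},|\Nx v|^2)$ coming from the multiplication by $v$ (estimate \eqref{1.est}) into the left-hand side, and handle the remaining factor $(1+|\Nx u_1|^4+|\Nx u_2|^4,|w|^2)$ by H\"older with exponents $9/2$, $9/7$ plus the interpolation $\|w\|_{L^{18/7}}^2\le C\|w\|_{L^2}^{4/3}\|w\|_{L^6}^{2/3}$; after a second Young inequality this yields the coefficient $C(1+\|\Nx u_1\|_{L^{18}}^6+\|\Nx u_2\|_{L^{18}}^6)$, which lies in $L^1([0,T])$ \emph{directly} by hypothesis \eqref{3.ad3}, with no auxiliary interpolation and no use of the $L^\infty([0,T],H^1)$ bound on $\Nx u$. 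What each approach buys: yours is self-contained and exploits the coercivity of $\phi'$ (which the paper's critical-case estimate ignores), and it uses literally nothing beyond \eqref{3.ad3}; the paper's version keeps the full dissipative structure in the explicit inequality \eqref{1.der}, which it then reuses (multiplied by $t^3$) in Corollary \ref{Cor3.dtreg} to obtain smoothing --- your final simplification $\Dt E_{-1}\le m(t)E_{-1}$ discards the dissipative terms, though your intermediate inequality retains them, so this is cosmetic. One small citation slip: the pointwise bound $|\phi'(\eta_1)-\phi'(\eta_2)|\le C(1+|\eta_1|^{4}+|\eta_2|^{4})|\eta_1-\eta_2|$ follows from the \emph{upper} bound in \eqref{1.int} via the mean value theorem, not from Lemma \ref{Da}, which provides the opposite (monotonicity) inequality --- the one you correctly invoke for the coercive term.
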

\begin{proof} Indeed, in the proof of Theorem \ref{Th1.uniq},
we have used the assumption $p<5$ in order to estimate the term
$$
([\phi'(\Nx u_1)-\phi'(\Nx u_2)],\Nx(-\Dx)^{-1}\Dt v)
$$
only. Instead, we now estimate this term based on \eqref{3.ad3}
and using the H\"older inequality with exponents $3$, $2$ and $6$
as follows:
\begin{multline}\label{3.new}
(\int_0^1\phi''(s\Nx u_1+(1-s)\Nx u_2)\,ds\Nx v,\Nx(-\Dx)^{-1}\Dt v)\le\\\le
 C(1+\|\Nx u_1\|_{L^{12}}+\|\Nx u_2\|_{L^{12}})^4\|\Nx v\|_{L^2}\|\Dt
 v\|_{L^2}\le\\\le
 \eb\|\Dt v\|^2_{L^2}+C_\eb(1+\|\Nx u_1\|_{L^{12}}+\|\Nx u_2\|_{L^{12}})^8\|\Nx
 v\|^2_{L^2}.
\end{multline}
Since, due to the interpolation inequality,
$$
L^\infty([0,T],H^1)\cap L^6([0,T],L^{18}(\Omega))\subset
L^8([0,T],L^{12}(\Omega)),
$$
assumption \eqref{3.ad3} together with the fact that the $H^2$
norm of $u(t)$ is bounded, gives the control
$$
\int_0^T\|\Nx u(s)\|^8_{L^{12}}\,ds\le C_T.
$$
Thus, analogously to \eqref{1.dlip},  we get
\begin{multline}\label{1.der}
\Dt [\gamma/2\|\Nx v\|_{L^2}^2+(\Dt v,v)+\frac2\gamma(\|\Dt
v\|_{H^{-1}}^2+\|v\|^2_{L^2})]+\beta(\|\Dt v\|^2_{L^2}+\|\Nx
v\|^2_{H^1})
\le\\\le C(1+\|\Nx u_1\|_{L^{12}}+\|\Nx u_2\|_{L^{12}})^8(\|\Nx v\|^2+\|\Dt v\|^2_{H^{-1}}).
\end{multline}
Applying the Gronwall inequality to this relation, we will have
\begin{equation}\label{1.derdiv}
\|\Dt v(t)\|_{H^{-1}}^2+\|v(t)\|_{H^1}^2\le
 C(t)(\|\Dt v(0)\|_{H^{-1}}^2+\|v(0)\|_{H^1}^2).
\end{equation}
which gives the desired uniqueness.
\end{proof}

\begin{corollary}\label{Cor3.dtreg} Let the assumptions of
Proposition \eqref{Prop3.limit} hold. Then, estimate
\eqref{1.dtest} is satisfied.
\end{corollary}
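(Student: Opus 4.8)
The plan is to prove Corollary \ref{Cor3.dtreg}, which asserts that the partial smoothing estimate \eqref{1.dtest} for the $\Dt u$-component holds under the hypotheses of Proposition \ref{Prop3.limit}, i.e.\ in the limit case $p=5$ supplemented by the extra regularity \eqref{3.ad3}. I would follow the structure of the proof of Proposition \ref{Prop1.dtreg} essentially verbatim, since its only genuine use of the growth restriction $p<5$ is hidden inside the differentiated-equation estimate \eqref{1.dtdif}, which in turn relied on the $p<5$ argument used to derive \eqref{1.dlip} in Theorem \ref{Th1.uniq}. The key observation is that Proposition \ref{Prop3.limit} has already supplied the replacement for exactly that step: estimate \eqref{1.der} plays, for $p=5$, the role that \eqref{1.dlip} played for $p<5$.

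First I would note that the linearized equation \eqref{1.dteq} satisfied by $v:=\Dt u$ is of the same form \eqref{1.difer} as the difference of two solutions, with $f'(u)$ in place of the divided difference of $f$ and $\phi''(\Nx u)\Nx v$ in place of the divided difference of $\phi'$. Consequently the multiplier $v+\tfrac2\gamma(-\Dx)^{-1}\Dt v$ can be applied exactly as before, and the only estimate needing modification is the quasi-linear cross term $(\phi''(\Nx u)\Nx v,\Nx(-\Dx)^{-1}\Dt v)$. Here I would repeat the H\"older argument of \eqref{3.new}, bounding $\phi''(\Nx u)$ by $C(1+|\Nx u|^4)$ and distributing the exponents $3,2,6$, to obtain a term of the form $C_\eb(1+\|\Nx u\|_{L^{12}})^8\|\Nx v\|_{L^2}^2+\eb\|\Dt v\|_{L^2}^2$. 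This yields the analogue of \eqref{1.dtdif} in which the constant multiplying $(\|v\|_{L^2}^2+\|\Dt v\|_{H^{-1}}^2)$ is now the time-dependent quantity $Q(\|\xi_u(t)\|_{\Cal E})(1+\|\Nx u(t)\|_{L^{12}}^8)$ rather than a fixed monotone function of the energy norm.

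Next I would invoke the $H^{-3}$ bound \eqref{1.dt2} on $\Dt v$ together with the interpolation inequality \eqref{1.interp} exactly as in Proposition \ref{Prop1.dtreg} to absorb the $\|\Dt v\|_{H^{-1}}^2$ contribution, arriving at the differential inequality
\begin{equation*}
\Dt E_{-1}(v(t))+\beta E_{-1}(v(t))+\tfrac12\beta(\|\Dt v\|_{L^2}^2+\|\Nx v\|_{L^2}^2)\le Q(\|\xi_u(t)\|_{\Cal E})(1+\|\Nx u(t)\|_{L^{12}}^8),
\end{equation*}
the $p=5$ counterpart of \eqref{1.dtdiff}. Finally, Gronwall's inequality applies because the coefficient $\beta$ on the left is constant; the growing coefficient appears only as an additive forcing on the right, which is integrable in time by the control $\int_0^T\|\Nx u(s)\|_{L^{12}}^8\,ds\le C_T$ established in the proof of Proposition \ref{Prop3.limit} (via the interpolation $L^\infty H^1\cap L^6 L^{18}\subset L^8 L^{12}$ together with the $H^2$-boundedness of $u$). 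Integrating then gives \eqref{1.dtest} as claimed.

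The main obstacle, and the only place where care is genuinely required, is that the forcing coefficient $(1+\|\Nx u(t)\|_{L^{12}}^8)$ is no longer a fixed function of the energy norm but a merely $L^1_t$ quantity, so the clean exponential-decay form of \eqref{1.dtest} must be read with $Q$ absorbing the time-integrated $L^{12}$-bound; one should check that the dissipative decay rate $e^{-\gamma t}$ survives, which it does since the homogeneous part of the inequality has a strictly positive constant coefficient $\beta$ and the inhomogeneity is controlled on unit time-intervals by the dissipative estimate \eqref{3.h2en1}. Everything else is a routine transcription of the $p<5$ proof with \eqref{1.der} substituted for \eqref{1.dlip}.
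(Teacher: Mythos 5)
Your first step faithfully mirrors the paper: rerun the multiplier argument of Proposition \ref{Prop1.dtreg} for $v=\Dt u$, replacing the subcritical estimate \eqref{1.phiest} by the critical estimate \eqref{3.new}. But the differential inequality you then display is not derivable, and this is a genuine gap. Estimate \eqref{3.new} produces the term $C_\eb(1+\|\Nx u(t)\|_{L^{12}})^8\|\Nx v(t)\|^2_{L^2}$, in which the merely time-integrable coefficient multiplies the \emph{unknown} $\|\Nx v\|^2_{L^2}$ (a constituent of $E_{-1}(v)$), not the quantities $\|v\|^2_{L^2}+\|\Dt v\|^2_{H^{-1}}$ you list. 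With $v=\Dt u$ one has $\|\Nx v(t)\|_{L^2}=\|\Nx\Dt u(t)\|_{L^2}$, which is \emph{not} controlled pointwise in time by $Q(\|\xi_u(t)\|_{\Cal E})$ --- the energy estimate \eqref{1.energy} only controls its integral over unit intervals --- and it cannot be absorbed into the left-hand dissipation $\beta\|\Nx v\|^2_{H^1}$ either, since the coefficient is large, not small. (This is also why you cannot use \eqref{1.strongdis} here: that estimate is itself a consequence of \eqref{1.dtest}, the very statement being proved.) The correct inequality therefore has the Gronwall form $\Dt E_{-1}(v)+\beta(\dots)\le a(t)E_{-1}(v)+\dots$ with $a(t)=C(1+\|\Nx u(t)\|_{L^{12}})^8\in L^1_{loc}$, and Gronwall yields only the \emph{growing} bound of type \eqref{1.derdiv}, exactly as the paper notes. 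Your closing claim that ``the homogeneous part of the inequality has a strictly positive constant coefficient $\beta$'' with the inhomogeneity purely additive rests on the false inequality; since the unit-interval average of $a(t)$ may well exceed $\beta$, the exponential decay does not survive by this route.

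The paper closes the gap with a second step that your proposal omits entirely: it combines the growing-coefficient estimate with the smoothing property of Proposition \ref{Prop1.dtsmooth}, multiplying \eqref{1.der} (with $v=\Dt u$) by $t^3$ and arguing as in \eqref{1.dtlast}. For $y(t):=t^3E_{-1}(v(t))$ the dangerous term is bounded by $Ca(t)y(t)$, and the Gronwall factor over a \emph{unit} interval, $\exp\bigl(\int_t^{t+1}a(s)\,ds\bigr)$, is uniformly bounded thanks to the dissipative control \eqref{3.ad2}, while the remaining forcing ($3t^2\|\Dt u(t)\|^2_{H^1}$, plus the $t^2\|\Dt v\|^2_{H^{-1}}$ term absorbed via the interpolation \eqref{1.interp} and the $H^{-3}$ bound \eqref{1.dt2}) is integrable over unit intervals by the energy estimate. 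Restarting this local smoothing estimate from energy-controlled data on each unit interval is what converts the finite-time, growing bound into the dissipative, exponentially decaying form \eqref{1.dtest}. Without this smoothing/restart mechanism your argument proves \eqref{1.dtest} only with constants growing in $t$.
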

\begin{proof} Indeed, arguing  as in the proof of Proposition
\ref{Prop1.dtreg}, but using estimate \eqref{3.new}, we derive the
analog of \eqref{1.dtest}, but with the {\it growing} in time
coefficients, see \eqref{1.der} and \eqref{1.derdiv}.
\par
In order to overcome this difficulty, we need to combine this
estimate with the smoothing property of Proposition
\ref{Prop1.dtsmooth} which can be obtained by multiplying
inequality \eqref{1.der} (of course, with $v=\Dt u$) by $t^3$ and
arguing exactly as in \eqref{1.dtlast}. This finishes the proof of
the proposition.
\end{proof}

\begin{corollary}\label{Cor3.lastdis} Let the assumptions of
Proposition \ref{Prop3.limit} be satisfied and let
$\xi_u(0)\in\Cal E_1$. Then, the following dissipative estimate
holds for the unique strong solution $u(t)$ of problem
\eqref{1.main}
\begin{multline}\label{1.strongdis}
\|u(t)\|^2_{H^2}+\|\Dt u(t)\|^2_{H^1}+\|\Dt^2 u(t)\|_{H^{-1}}^2+\\+\int_t^{t+1}\|\Nx u(s)\|^6_{L^{18}}\,ds
\le Q(\|\xi_u(0)\|_{\Cal E_1})e^{-\beta t}+Q(\|g\|_{L^2}),
\end{multline}
for some positive constant $\beta$ and monotone function $Q$.
\end{corollary}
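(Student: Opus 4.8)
The plan is to combine the three ingredients already assembled in this section for the critical exponent $p=5$: the $H^2$-dissipative estimate from Lemma \ref{Lem3.h2} (valid without any growth restriction on $\phi$), the partial smoothing and $H^1$-dissipativity of $\Dt u$ from Corollary \ref{Cor3.dtreg}, and the new $L^6([0,T],L^{18})$-type control of $\Nx u$ coming from the additional regularity \eqref{3.ad2}. Since $\xi_u(0)\in\Cal E_1$, the $\Dt u$-component of the initial data is regular enough (in particular $\Dt^2 u(0)\in H^{-1}$ through the condition $\Nx\phi'(\Nx u(0))\in H^{-1}$), so all three estimates are simultaneously applicable to the unique strong solution guaranteed by Proposition \ref{Prop3.limit}.

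First I would invoke Lemma \ref{Lem3.h2} to obtain the exponentially decaying bound on $\|u(t)\|_{H^2}^2$ together with the integrated control $\int_t^{t+1}(|\phi''(\Nx u)|,|D^2_x u|^2)\,ds$, exactly as in \eqref{3.h2}. Next I would pass this spatial-regularity information through the Sobolev embedding argument already recorded in the remark following Theorem \ref{Th3.strong}: the integrated $\phi''$-weighted $H^2$ control, combined with the growth assumption $\phi''(\eta)\ge a_0|\eta|^{p-1}$ with $p=5$, yields precisely the bound \eqref{3.ad2}, which for $p=5$ gives $\int_t^{t+1}\|\Nx u(s)\|^6_{L^{18}}\,ds$ decaying exponentially with the same structure $Q(\|\xi_u(0)\|_{\Cal E_1})e^{-\beta t}+Q(\|g\|_{L^2})$. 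This accounts for the last term on the left-hand side of \eqref{1.strongdis}.

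For the $\|\Dt u(t)\|^2_{H^1}$ and $\|\Dt^2 u(t)\|^2_{H^{-1}}$ terms I would appeal to Corollary \ref{Cor3.dtreg}, which asserts that estimate \eqref{1.dtest} survives in the critical case $p=5$ once the divergent-in-time bound \eqref{1.derdiv} is tamed by the $t^3$-multiplier smoothing argument of Proposition \ref{Prop1.dtsmooth}. Since $\xi_u(0)\in\Cal E_1$ controls $\|\Dt u(0)\|_{H^1}$ and $\|\Dt^2 u(0)\|_{H^{-1}}$, estimate \eqref{1.dtest} delivers $\|\xi_{\Dt u}(t)\|^2_{\Cal E_{-1}}=\|\Dt u(t)\|^2_{H^1}+\|\Dt^2 u(t)\|^2_{H^{-1}}$ with the required exponential decay. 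Collecting the three contributions and taking the maximum of the various monotone functions $Q$ and the minimum of the decay rates $\beta$ then yields \eqref{1.strongdis}.

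The main obstacle I anticipate is not in the assembly but in justifying that the coefficients in Corollary \ref{Cor3.dtreg} are genuinely controlled uniformly in time rather than merely locally. The estimate \eqref{1.der} for the difference equation carries the factor $(1+\|\Nx u_1\|_{L^{12}}+\|\Nx u_2\|_{L^{12}})^8$, which is only integrable in time, so a direct Gronwall argument gives the time-growing constant $C(t)$ of \eqref{1.derdiv}. The delicate point is that the $t^3$-weighted smoothing of Proposition \ref{Prop1.dtsmooth} converts this local-in-time $L^1_t$ control of the coefficient into a genuinely dissipative (exponentially decaying) estimate for $\Dt u$, because the integrated weight $\int_t^{t+1}\|\Nx u(s)\|^6_{L^{18}}\,ds$ is itself bounded by the dissipative quantity from \eqref{3.ad2}. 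I would therefore be careful to feed the decaying bound on $\int_t^{t+1}\|\Nx u\|^6_{L^{18}}\,ds$ back into the Gronwall estimate over unit time intervals, so that the accumulated exponential factor stays bounded and the decay rate $\beta$ is preserved.
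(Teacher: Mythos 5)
Your proposal is correct and follows essentially the same route as the paper, whose proof is precisely the one-line assembly of the $H^2$-dissipative estimate \eqref{3.h2en1}, the $L^6([t,t+1],L^{18})$ control \eqref{3.ad2} (i.e., \eqref{3.ad2} with $p=5$), and the $\Dt u$-estimate \eqref{1.dtest} as extended to the critical case by Corollary \ref{Cor3.dtreg}. The extra care you take about the time-integrable coefficient $(1+\|\Nx u_1\|_{L^{12}}+\|\Nx u_2\|_{L^{12}})^8$ and the $t^3$-weighted smoothing is exactly the content the paper delegates to Corollary \ref{Cor3.dtreg}, so nothing is missing.
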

Indeed, the above estimate is a combination of \eqref{3.h2en1},
\eqref{3.ad2} and \eqref{1.dtest}.

\section{Regularity of the global attractor and the exponential
attractor}\label{s4}
In this section, we verify that the weak global attractor $\Cal A$
consists of strong solutions and construct an exponential
attractor for the semigroup $\{S(t)\,, t\ge0\}$ associated with
equation \eqref{1.main} in the energy space $\Cal E$. We start
with the following theorem which establishes the asymptotic
regularity of that semigroup.

\begin{theorem}\label{Th4.exp} Let the assumptions of Theorem
\ref{Th1.uniq} hold. Then, for every bounded set $B$ in $\Cal E$,
the following estimate is satisfied:
\begin{equation}\label{4.exp}
\dist_{\Cal E_{-1}}(S(t) B,\{\xi\in\Cal E_1,\ \|\xi\|_{\Cal E_1}\le K\})\le Q(\|B\|_{\Cal E})e^{-\beta t}
\end{equation}
if $K$ is large enough. Here
where $\beta$ is some positive number, $Q$ is a monotone function
both independent of $B$ and $t$ and $\dist_V(X,Y)$ is a
non-symmetric Hausdorff distance between sets in $V$.
\end{theorem}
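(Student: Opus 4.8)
The plan is to combine the partial smoothing of $\Dt u$ already obtained in Proposition~\ref{Prop1.dtsmooth} with the $H^2$-machinery of Lemma~\ref{Lem3.h2}, and then to deduce the exponential attraction by a standard transitivity argument. First I would use the dissipative estimate \eqref{1.energy} to reduce matters to trajectories issued from the absorbing ball $\Cal B_R$: since $\|S(t)B\|_{\Cal E}\le Q(\|B\|_{\Cal E})$ for all $t$ and $S(t)B\subset\Cal B_R$ for $t\ge T_0(\|B\|_{\Cal E})$, it suffices to produce a constant $K$ and a time after which $\|\xi_u(t)\|_{\Cal E_1}\le K$ for every trajectory starting in $\Cal B_R$; the passage to \eqref{4.exp} on all of $B$, including the bounded transient $0\le t\le T_0$, then only enlarges the monotone function $Q$.

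The core is an asymptotic $W^{2,2}$-bound for the $u$-component valid for data merely in $\Cal E$. Proposition~\ref{Prop1.dtsmooth} already gives, for $t\ge1$, a dissipative control of $\|\Dt u(t)\|_{H^1}^2$, $\|\Dt^2 u(t)\|_{H^{-1}}^2$ and $\int_t^{t+1}\|\Dt^2 u(s)\|_{L^2}^2\,ds$ by $Q(\|\xi_u(0)\|_{\Cal E})e^{-\beta t}+Q(\|g\|_{L^2})$. I would then run the localized $H^2$-estimate of Lemma~\ref{Lem3.h2} — multiplying \eqref{1.main} by $\Dx u$, by the tangential multipliers $\partial_\tau^*\partial_\tau u$, and by the interior cut-off term $\Nx(\theta\Nx u)$ — to obtain a differential inequality $\frac d{dt}\Psi(t)+c\,\Psi(t)+\kappa(|\phi''(\Nx u)|,|D^2_x u|^2)\le G(t)$, where $\Psi(t)\simeq\|u(t)\|_{W^{2,2}}^2$ up to lower-order terms controlled by $\|\xi_u\|_{\Cal E}$, and where $G$ is controlled by $\|\phi(\Nx u)\|_{L^1}$ and $\|\Dt\Nx u\|_{L^2}^2$, hence dissipatively by \eqref{1.energy}. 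The new point, absent in Lemma~\ref{Lem3.h2}, is that the data is not in $H^2$, so I cannot integrate from $t=0$. Instead I would remove the initial-data dependence exactly as in the proof of Proposition~\ref{Prop1.dtsmooth}, multiplying by a power $t^m$ and integrating, using the coercivity of the linear term $-\Dx u$ (which yields honest $\|\Dx u\|_{L^2}^2\simeq\|u\|_{W^{2,2}}^2$, the quasi-linear contribution only adding the nonnegative weighted term) to absorb the commutator terms. This should yield $\|u(t)\|_{W^{2,2}}^2\le\frac{1+t^m}{t^m}Q(\|\xi_u(0)\|_{\Cal E})e^{-\beta t}+Q(\|g\|_{L^2})$, i.e. a dissipative $W^{2,2}$-bound for $t>0$.

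Once $u(t)\in W^{2,2}$ with a dissipative bound, the remaining component of the $\Cal E_1$-norm is automatic. In three dimensions $W^{2,2}\subset C(\overline\Omega)$, so $f(u(t))\in L^\infty\subset H^{-1}$; combined with $\Dt^2 u(t)\in H^{-1}$, with $\gamma\Dt\Dx u(t)=\gamma\Dx(\Dt u(t))\in H^{-1}$ (from $\Dt u(t)\in H^1$), and with $\Dx u(t)\in L^2$, the identity $\Nx\phi'(\Nx u)=\Dt^2 u-\gamma\Dt\Dx u-\Dx u+f(u)-g$ shows that $\Nx\phi'(\Nx u(t))\in H^{-1}$ with the same type of bound. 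Hence $\|\xi_u(t)\|_{\Cal E_1}\le Q(\|\xi_u(0)\|_{\Cal E})e^{-\beta t}+Q(\|g\|_{L^2})$ for $t\ge1$, which gives absorption into a fixed ball $\BB\subset\Cal E_1$ after a finite time depending only on $\|B\|_{\Cal E}$, and therefore \eqref{4.exp} after the transitivity step.

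I expect two places to be delicate. The first is that the naive linear decomposition $u=v+w$, with $v$ the exponentially decaying solution of the homogeneous linear equation, is useless here: the quasi-linear source $\Nx\cdot\phi'(\Nx u)$ is, in general, only in $W^{-1,(p+1)/p}(\Omega)$, a space strictly larger than $H^{-1}$ for $p>1$, so it destroys the regularity of $w$. This is precisely why the argument must be organised around the intrinsic elliptic coercivity of $-\Dx-\Nx\cdot\phi'(\Nx\cdot)$ rather than around a perturbation of the linear flow. The second, and genuinely hardest, point is the weighted $W^{2,2}$-smoothing itself: since the $\Dt^2 u$- and $\gamma\Dt\Dx u$-terms make the $H^2$-estimate intrinsically time-evolutionary, eliminating the dependence on $\|u(0)\|_{W^{2,2}}$ through the $t^m$-weight forces me to absorb a term $m\,t^{m-1}\|u\|_{W^{2,2}}^2$ produced by $\frac d{dt}(t^m\Psi)$; this requires first securing an auxiliary $L^2_t W^{2,2}_x$-type bound, which I would extract from the coercive $-\Dx u$ contribution together with the already-dissipative control of $\Dt u$ and $\Dt^2 u$ from Proposition~\ref{Prop1.dtsmooth}. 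As elsewhere in the paper, these formal computations are to be justified on the regularized nonlinearities $\phi_n$ of Theorem~\ref{Th3.strong} and then passed to the limit.
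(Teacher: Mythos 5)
Your proposal contains a genuine gap, and it is located exactly where you yourself flag the ``hardest point'': the weighted $t^m$-smoothing estimate for the $u$-component is not merely delicate, it is \emph{false}. Unlike the $\Dt u$-component (Proposition \ref{Prop1.dtsmooth}), the $u$-component of this equation does not regularize in finite time, even at the linear level. Indeed, for $u_{tt}-\gamma\Dt\Dx u-\Dx u=0$ the modal exponents are $\mu_\pm(\lambda)=\tfrac12\bigl(-\gamma\lambda\pm\sqrt{\gamma^2\lambda^2-4\lambda}\bigr)$, and the slow branch satisfies $\mu_+(\lambda)\to-1/\gamma$ as $\lambda\to\infty$; initial data placed on this branch with $u(0)\in H^1_0\setminus H^2$, $\Dt u(0)\in L^2$, produce a solution whose $H^2$-norm remains infinite for \emph{all} $t>0$, the high-frequency part decaying only in amplitude, never in regularity. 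This is precisely why the theorem asserts an exponentially small $\dist_{\Cal E_{-1}}$ to an $\Cal E_1$-ball rather than absorption into one; your reduction ``it suffices to produce $K$ and a time after which $\|\xi_u(t)\|_{\Cal E_1}\le K$'' already asks for more than is true. One can also see the failure inside your own scheme: multiplying by $\Dx u$ yields an inequality of the form $\frac{d}{dt}\Psi+c\Psi+\kappa(|\phi''(\Nx u)|,|D^2_xu|^2)\le G$ in which the dissipative term controls $\|\Dx u\|^2_{L^2}$ --- the \emph{same} order as the leading part of $\Psi$. The $t^m$-trick of Proposition \ref{Prop1.dtsmooth} works there only because the dissipation ($\|\Nx v\|^2_{L^2}+\|\Dt v\|^2_{L^2}$) is strictly stronger than the differentiated functional $E_{-1}(v)$, with the interpolation \eqref{1.interp} anchoring the cascade at a quantity bounded by the energy. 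Here the term $m\,t^{m-1}\|\Dx u\|^2_{L^2}$ dominates $c\,t^{m}\|\Dx u\|^2_{L^2}$ near $t=0$ and there is no lower-order anchor: the auxiliary $L^2_tW^{2,2}_x$ bound you propose to extract is exactly what is missing, and cannot exist by the linear counterexample.

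Ironically, the splitting you dismiss in your first ``delicate point'' is (in corrected form) the actual proof. You are right that a \emph{linear} decomposition fails because $\Nx\cdot\phi'(\Nx u)$ lives only in $W^{-1,(p+1)/p}$; but the paper's splitting $u=v+w$ is fully nonlinear and, crucially, \emph{first order in time}: $w$ solves $-\gamma\Dt\Dx w-\Dx w+Lw+f(w)-\Nx(\phi'(\Nx w))=h_u(t)$ with $w|_{t=0}=0$, where the entire obstruction $\Dt^2u$ has been moved into the forcing $h_u:=-\Dt^2u+g+Lu$, which is controlled in $L^2$ on unit time intervals thanks to the $\Dt u$-smoothing \eqref{4.ap} of Proposition \ref{Prop1.dtsmooth}. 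Since $w$ carries the full quasi-linear structure with smooth forcing and zero data, the localized $H^2$-machinery of Lemma \ref{Lem3.h2} applies verbatim and gives $\|w(t)\|_{H^2}\le C_L$, with no initial-layer problem to remove; the remainder $v$ solves \eqref{1.decay}, and multiplying by $v$, the quasi-linear difference drops by monotonicity of $\phi'$ (Lemma \ref{Da}) while $f(v+w)-f(w)$ is absorbed by $Lv$ for $L$ large, yielding $\|v(t)\|_{H^1}\le\|u(0)\|_{H^1}e^{-\beta t}$. Thus the exponential decay is carried entirely by $v$, and no finite-time regularization of $u$ is ever needed. Your third-paragraph argument recovering the $\Nx\phi'(\Nx u)\in H^{-1}$ component from the equation is sound as far as it goes, but it rests on the false $W^{2,2}$-smoothing and so cannot be salvaged in its present form; replacing your Step 2 by the nonlinear splitting above repairs the proof.
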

\begin{proof} Due to the dissipative estimate \eqref{1.energy}, it
is sufficient to prove the theorem for the absorbing ball $B=B_R$.
\par
Let now $u(t)$ be any trajectory starting from the absorbing ball
$B_R$. Then, due to Proposition \ref{Prop1.dtsmooth}, we may assume
without loss of generality that
\begin{equation}\label{4.ap}
\int_t^{t+1}\|\Dt^2 u(s)\|^2\,ds+\|\Dt^2 u(t)\|^2_{H^{-1}}+\|\Dt
u(t)\|_{H^1}^2\le C_R,
\end{equation}
where the constant $C_R$ is independent of $t$. Thus, it is
sufficient to verify that
\begin{equation}\label{4.simexp}
\dist_{H^1}(u(t),\{u_0\in H^2\cap H^1_0,\ \|u_0\|_{H^2}\le K\})
\le C_Re^{-\beta t}.
\end{equation}
In order to do so we split the function $u$ as follows:
$u(t)=v(t)+w(t)$ where the function $w$ solves
\begin{multline}\label{1.smooth}
-\gamma\Dt\Dx w-\Dx w+Lw+f(w)-\\-\Nx(\phi'(\Nx w))=h_u(t):=-\Dt^2
u(t)+g+Lu(t),\ \ w\big|_{t=0}=0,
\end{multline}
where $L$ is a sufficiently large number which will be fixed below.
Then, the reminder $v(t)$ should solve
\begin{multline}\label{1.decay}
-\gamma\Dt\Dx v-\Dx v+[f(v+w)-f(w)]+L v =\\=
\Nx(\phi'(\Nx v+w)-\phi'(\Nx w)), \ v\big|_{t=0}=w\big|_{t=0}.
\end{multline}
The desired result will follow from  two lemmas formulated below.

\begin{lemma}\label{Lem4.sm} Let the above assumptions hold. Then
the solution $w(t)$ belongs to $H^2$ for every $t\ge0$ and the
following estimate holds:
\begin{equation}\label{4.sm}
\|w(t)\|_{H^2}\le C_L,
\end{equation}
where the constant $C_L$ may depend on $L$, but is independent of the concrete choice of
the trajectory $u$ starting from the absorbing ball $B_R$ and
satisfying \eqref{4.ap}.
\end{lemma}
\begin{proof}
Indeed, due to \eqref{4.ap}, we have the $L^2$-control of the
right-hand side of \eqref{1.smooth}:
\begin{equation}
\int_t^{t+1}\|h_u(s)\|^2_{L^2}\,ds\le C_L.
\end{equation}
This, allows us to verify estimate \eqref{4.sm} just repeating
word by word the proof of Lemma \ref{Lem3.h2}. By this reason, we
leave the proof to the reader.
\end{proof}

\begin{lemma}\label{Lem4.dec} Let the above assumptions hold and
let $L$ be large enough. Then, the $v$ component of the solution
$u$ satisfies the following estimate:
\begin{equation}\label{4.dec}
\|v(t)\|_{H^1}\le \|u(0)\|_{H^1}e^{-\beta t},
\end{equation}
for some positive exponent $\beta$.
\end{lemma}
\begin{proof} Indeed, multiplying equation \eqref{1.decay} by $v$,
integrating over $\Omega$ and using the monotonicity of $\phi'$, we
get
$$
\frac\gamma2\frac d{dt}\|\Nx v(t)\|^2+\|\Nx
v(t)\|^2_{L^2}+(f(v(t)+w(t))-f(w(t)),v(t))+L\|v(t)\|_{L^2}^2\le0.
$$
Using now assumption \eqref{3.int}, we see that
$$
(f(v(t)+w(t))-f(w(t)),v(t))\ge -C\|v(t)\|^2_{L^2}
$$
and, therefore, the Gronwall inequality gives the desired estimate
\eqref{4.dec} if $L\ge C$.
\end{proof}
Combining estimates \eqref{4.dec} and \eqref{4.sm}, we deduce
\eqref{4.simexp} and finish the proof of the theorem.
\end{proof}

As an immediate corollary of the proved theorem, we obtain the
following result on the regularity of the weak global attractor
$\Cal A$.

\begin{corollary}\label{Cor4.atreg} Let the assumptions of Theorem
\ref{Th1.uniq} hold. Then, the weak global attractor $\Cal A$ of
the solution semigroup $\{S(t),\,t\ge0\}$ associated with equation
\eqref{1.main} belongs to  the space $\Cal E_1$ and is bounded
there:
\begin{equation}
\|\Cal A\|_{\Cal E_1}\le C.
\end{equation}
In particular, the attractor $\Cal A$ is generated by strong
solutions of problem \eqref{1.main}.
\end{corollary}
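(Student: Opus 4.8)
The plan is to combine the strict invariance of $\Cal A$ with the exponential attraction of the $\Cal E_1$-ball furnished by Theorem \ref{Th4.exp}. Write $\mathcal B_K:=\{\xi\in\Cal E_1\,:\,\|\xi\|_{\Cal E_1}\le K\}$ for the threshold $K$ from that theorem. Since, by property 1) of the attractor, $\Cal A$ is bounded in $\Cal E$, say $\|\Cal A\|_{\Cal E}\le C$, and since it is strictly invariant, $S(t)\Cal A=\Cal A$ for all $t\ge0$. Applying estimate \eqref{4.exp} with $B=\Cal A$ then gives
\[
\dist_{\Cal E_{-1}}(\Cal A,\mathcal B_K)=\dist_{\Cal E_{-1}}(S(t)\Cal A,\mathcal B_K)\le Q(C)e^{-\beta t},\qquad t\ge0.
\]
The left-hand side does not depend on $t$, so letting $t\to\infty$ forces $\dist_{\Cal E_{-1}}(\Cal A,\mathcal B_K)=0$; that is, $\Cal A$ is contained in the closure of $\mathcal B_K$ in the topology of $\Cal E_{-1}$.

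It remains to verify that this $\Cal E_{-1}$-closure is still bounded in $\Cal E_1$, i.e.\ that $\mathcal B_K$ is closed in $\Cal E_{-1}$; this is the only nonroutine point. So I take a sequence $\xi_n=(u_n,\Dt u_n)\in\mathcal B_K$ with $\xi_n\to\xi=(u,p)$ in $\Cal E_{-1}=H^1_0(\Omega)\times H^{-1}(\Omega)$. The linear components are handled by weak lower semicontinuity: since $\|u_n\|_{H^2}\le K$ and $u_n\to u$ strongly in $H^1_0$, a subsequence converges weakly in $H^2$ with limit $u$, whence $u\in H^2$ and $\|u\|_{H^2}\le\liminf_{n}\|u_n\|_{H^2}\le K$; likewise $\|\Dt u_n\|_{H^1}\le K$ together with $\Dt u_n\to p$ in $H^{-1}$ gives $p\in H^1_0$ with $\|p\|_{H^1}\le K$.

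The main obstacle is the nonlinear constraint $\Nx\phi'(\Nx u)\in H^{-1}$. Here I would argue as follows. From $u_n\to u$ in $H^1_0$ we have $\Nx u_n\to\Nx u$ in $L^2$, hence (along a subsequence) almost everywhere, so $\phi'(\Nx u_n)\to\phi'(\Nx u)$ almost everywhere by continuity of $\phi'$. Moreover, the bound $\|u_n\|_{H^2}\le K$ and the $3$D Sobolev embedding $H^2(\Omega)\subset W^{1,6}(\Omega)$ give $\Nx u_n$ bounded in $L^6$, while \eqref{1.int} yields the growth $|\phi'(\eta)|\le C(1+|\eta|^p)$; consequently $\phi'(\Nx u_n)$ is bounded in $L^{6/p}(\Omega)$ with $6/p>1$ because $p<5<6$. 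Boundedness in a reflexive $L^{6/p}$ together with almost-everywhere convergence identifies the weak limit, so $\phi'(\Nx u_n)\rightharpoonup\phi'(\Nx u)$ in $L^{6/p}$ and therefore $\Nx\phi'(\Nx u_n)\to\Nx\phi'(\Nx u)$ in the sense of distributions. On the other hand $\Nx\phi'(\Nx u_n)$ is bounded in $H^{-1}$, so a subsequence converges weakly in $H^{-1}$ to some limit which, by uniqueness of distributional limits, must equal $\Nx\phi'(\Nx u)$; weak lower semicontinuity of the $H^{-1}$-norm then gives $\Nx\phi'(\Nx u)\in H^{-1}$ with $\|\Nx\phi'(\Nx u)\|_{H^{-1}}\le K$. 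Collecting the three estimates shows $\xi\in\mathcal B_K$, so $\mathcal B_K$ is $\Cal E_{-1}$-closed and hence $\Cal A\subset\mathcal B_K$, i.e.\ $\|\Cal A\|_{\Cal E_1}\le K=:C$. Finally, since every point of $\Cal A$ now has initial data in $\Cal E_1$, Corollary \ref{Cor3.strong} (applicable because $p<5$) guarantees that the corresponding complete bounded trajectory is a strong solution, which yields the last assertion.
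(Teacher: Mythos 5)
Your proof is correct and follows essentially the same route as the paper, which states the corollary as an immediate consequence of Theorem \ref{Th4.exp}: invariance $S(t)\Cal A=\Cal A$ plus the exponential attraction \eqref{4.exp} forces $\Cal A$ into the $\Cal E_{-1}$-closure of the $\Cal E_1$-ball. Your careful verification that this ball is closed in $\Cal E_{-1}$ --- in particular the treatment of the nonlinear constraint $\Nx\phi'(\Nx u)\in H^{-1}$ via the $L^{6/p}$ bound (using $p<5$), almost-everywhere convergence, and weak lower semicontinuity --- is exactly the detail the paper leaves implicit, and it is sound.
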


We are now ready to overcome the difficulty mentioned in Remark
\ref{Rem2.noexp} and to construct an exponential attractor for the
solution semigroup $\{S(t),\, t\ge0\}$. We first recall the
exponential attractor definition adapted to our situation.

\begin{definition}\label{Def4.exat} A set $\Cal M$ is a (weak)
exponential attractor for  the solution semigroup $\{S(t),\, t\ge0\}$
acting in the energy space $\Cal E$ if
\par
1) The set $\Cal M$ is bounded in $\Cal E$ and compact in
$\Cal E_{-1}$;
\par
2) It is semi-invariant: $S(t)\Cal M\subset\Cal M$;
\par
3) It attracts exponentially the images of all bounded in $\Cal E$
sets in the topology of $\Cal E_{-1}$, i.e., there exists a
positive constant $\gamma$ and a monotone function $Q$ such that
$$
\dist_{\Cal E_{-1}}(S(t)B,\Cal M)\le Q(\|B\|_{\Cal E})e^{-\gamma
t},
$$
for all bounded subsets $B\subset\Cal E$;
\par
4) It has finite fractal dimension in $\Cal E_{-1}$.
\end{definition}

\begin{theorem}\label{Th4.aexp} Let the assumptions of Theorem
\ref{Th1.uniq} hold. Then, the solution semigroup
$\{S(t),\,t\ge0\}$ associated with equation \eqref{1.main}
possesses an exponential attractor $\Cal M$ in the sense of
Definition \ref{Def4.exat}. Moreover, this exponential attractor
is bounded in $\Cal E_1$:
$$
\|\Cal M\|_{\Cal E_1}\le C.
$$
\end{theorem}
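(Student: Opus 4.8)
The plan is to construct the exponential attractor by working in the discrete setting of the map $S(1)$ combined with the asymptotic smoothing property already established, and then to lift the discrete exponential attractor to the continuous semigroup. The essential ingredients are at hand: the weak Lipschitz continuity \eqref{1.lip} of the semigroup in $\Cal E_{-1}$, the transitivity/smoothing estimate \eqref{2.Ltr} underlying the $L$-trajectory method, and, crucially, Theorem \ref{Th4.exp} which says that a bounded ball $\Bbb B$ in the stronger space $\Cal E_1$ exponentially attracts all bounded sets in the $\Cal E_{-1}$-topology. The strategy is to replace the abstract squeezing estimate used in Section \ref{s.atr} (which only gave a discrete attractor for the map $\Bbb S(L)$ on trajectories) by a genuine smoothing property on the \emph{phase space}, now available thanks to the $\Cal E_1$-regularity of the attracting set.

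First I would reduce everything to the absorbing ball. By \eqref{1.energy} it suffices to build the exponential attractor for $S(t)$ restricted to the closed absorbing set $\Cal B_R$ satisfying $S(t)\Cal B_R\subset\Cal B_R$. Next I would invoke Theorem \ref{Th4.exp}: after a finite time, the orbit enters an arbitrarily small $\Cal E_{-1}$-neighborhood of a fixed ball $\Bbb B\subset\Cal E_1$, and by Corollary \ref{Cor4.atreg} the attractor itself lives in $\Cal E_1$. On $\Bbb B$ we gain compactness in $\Cal E_{-1}$ because $\Cal E_1\hookrightarrow\Cal E_{-1}$ is compact, and we also gain the partial smoothing of the $\Dt u$-component from Proposition \ref{Prop1.dtsmooth}. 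The standard abstract scheme for exponential attractors (as in \cite{EFZ}, see also \cite{MP,Z2}) then requires: (i) a smoothing/squeezing inequality of the form
\begin{equation*}
\|S(1)\xi_1-S(1)\xi_2\|_{\Cal E_1}\le C\|\xi_1-\xi_2\|_{\Cal E_{-1}}
\end{equation*}
on $\Bbb B$, decomposing the difference into a contracting part and a part controlled in the smoother norm, and (ii) Lipschitz (or H\"older) continuity of the map $t\mapsto S(t)\xi$ in $\Cal E_{-1}$, uniformly on $\Bbb B$. Ingredient (i) follows by the $v$--$w$ splitting of Theorem \ref{Th4.exp} applied to the difference of two solutions together with the Lipschitz estimate \eqref{1.lip}, while the needed time-continuity is supplied by the uniform bounds \eqref{4.ap} (so that $\|\Dt u\|_{H^1}$ and $\int\|\Dt^2 u\|^2_{L^2}$ are controlled), which give H\"older continuity of the orbit in the $\Cal E_{-1}$-metric.

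With these two ingredients I would apply the abstract construction to the discrete map $S(1)$ to obtain a set $\Cal M_d\subset\Bbb B$ which is compact in $\Cal E_{-1}$, has finite fractal dimension there, is semi-invariant under $S(1)$, and attracts $\Cal B_R$ exponentially in $\Cal E_{-1}$. Finally I would pass from the discrete to the continuous exponential attractor by setting $\Cal M:=\cup_{t\in[0,1]}S(t)\Cal M_d$ (the standard fattening); the H\"older continuity in $t$ established above guarantees that this union remains finite-dimensional and compact in $\Cal E_{-1}$, and semi-invariance together with the exponential attraction follows from the triangle inequality combined with \eqref{1.lip}. The boundedness $\|\Cal M\|_{\Cal E_1}\le C$ is automatic since $\Cal M_d\subset\Bbb B\subset\Cal E_1$ and the flow preserves the $\Cal E_1$-bound by \eqref{3.strongdis}.

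The main obstacle, exactly as flagged in Remark \ref{Rem2.noexp}, is the gap between the metric in which we have smoothing ($\Cal E_1$ versus $\Cal E_{-1}$) and the time-continuity needed to fatten the discrete attractor into a continuous one: energy solutions are not H\"older continuous in the strong $\Cal E$-norm, so one must be careful to perform the whole construction in the weaker $\Cal E_{-1}$-topology throughout. The key point that rescues the argument is that, \emph{after the regularization of Theorem \ref{Th4.exp}}, the orbits lie in a ball bounded in $\Cal E_1$ on which the $\Cal E_{-1}$-valued trajectory \emph{is} uniformly H\"older continuous (thanks to \eqref{4.ap}), so the fattening step is legitimate. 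Verifying the smoothing inequality (i) rigorously — i.e.\ that the $w$-component stays bounded in $\Cal E_1$ while the $v$-component decays in $\Cal E_{-1}$, uniformly over pairs of initial data in $\Bbb B$ — is the most delicate computation, but it is a direct adaptation of Lemmas \ref{Lem4.sm} and \ref{Lem4.dec} to the difference of two trajectories.
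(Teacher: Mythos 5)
Your overall architecture (discrete attractor on a regularized, $\Cal E_1$-bounded set, then fattening in time using H\"older continuity recovered by interpolation, then extension of the attraction to all of $\Cal E$ via \eqref{4.exp}, \eqref{1.lip} and transitivity of exponential attraction) matches the paper. However, your central ingredient (i) — the smoothing estimate for differences $\|S(1)\xi_1-S(1)\xi_2\|_{\Cal E_1}\le C\|\xi_1-\xi_2\|_{\Cal E_{-1}}$ on $\Bbb B$ — is a genuine gap, and it is precisely the estimate that is \emph{not} available for this quasi-linear problem. It does not follow from the $v$--$w$ splitting of Theorem \ref{Th4.exp}: that splitting decomposes a \emph{single} trajectory, with $w$ solving \eqref{1.smooth} whose right-hand side $h_u$ is controlled in $L^2$ via \eqref{4.ap}, and the $H^2$-bound of Lemma \ref{Lem4.sm} is obtained by repeating the fully nonlinear energy estimate of Lemma \ref{Lem3.h2} for a solution of an equation of the same structure as \eqref{1.main}. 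A difference $v=u_1-u_2$ instead solves \eqref{1.difer}, whose quasi-linear term is $\Nx(\phi'(\Nx u_1)-\phi'(\Nx u_2))$ with solution-dependent coefficients; no analogue of Lemma \ref{Lem3.h2} exists for such an equation, and nowhere does the paper prove (nor does your sketch supply) a Lipschitz bound of any "smooth part" of the difference in terms of $\|\xi_1-\xi_2\|_{\Cal E_{-1}}$. Note additionally that the $\Cal E_1$-"norm" contains the nonlinear quantity $\|\Nx\phi'(\Nx u)\|_{H^{-1}}$, so your inequality (i) is not even a linear statement about differences; Remark \ref{Rem.problem} indicates that regularity beyond $\Cal E_1$-type estimates is genuinely problematic in 3D, and the same obstructions hit smoothing-for-differences. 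Without (i), the abstract exponential-attractor scheme you invoke collapses at the discrete step.

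What the paper does instead is to avoid any phase-space smoothing of differences: the discrete exponential attractor is built by the method of $l$-trajectories, where the squeezing property \eqref{2.sqz}, derived from the weak estimate \eqref{2.Ltr}, requires only the compactness of the embedding $\Cal H_1\subset\Cal H$ at the level of trajectory spaces (an Aubin--Lions-type compactness exploiting the $\Dt^2 v$-control in $H^{-3}$), together with the Lipschitz continuity of $\Bbb S(L)$. The new input in Section \ref{s4} is to rerun this construction on the semi-invariant absorbing set $\tilde{\Cal B}_R\subset\Cal E_1$ furnished by the strong dissipative estimate \eqref{3.strongdis}; the resulting discrete attractor $\Cal M_d$ is then automatically bounded in $\Cal E_1$, so interpolation yields the uniform H\"older bound $\|S(t+h)\xi-S(t)\xi\|_{\Cal E_{-1}}\le C|h|^{1/2}$ and the fattening \eqref{4.standard} is legitimate — that part of your plan is correct and coincides with the paper. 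The repair, therefore, is to replace your ingredient (i) by the trajectory-space squeezing \eqref{2.sqz} applied on $\tilde{\Cal B}_R$, keeping the rest of your argument as is.
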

\begin{proof} Indeed, as we have already mentioned in Remark \ref{Rem2.noexp},
the squeezing property \eqref{2.Ltr} is sufficient to construct an
exponential attractor $\Cal M_d\subset\Cal B_R$ for the discrete semigroup $S(L):\Cal E\to\Cal
E$, see \cite{EFNT}. However, in order to extend it to the attractor
$\Cal M$ of the semigroup with continuous time by the standard
relation
\begin{equation}\label{4.standard}
\Cal M=[\cup_{t\in[0,L]}S(t)\Cal M_d]_{\Cal E_{-1}},
\end{equation}
we need the uniform H\"older continuity of $S(t)\xi$ on both
variables $t$ and $\xi$ on $\Cal M_d$. Although the continuity in $\xi$ follows from
Theorem \ref{Th1.uniq}, we however do not have
the H\"older continuity in time if $\Cal M_d$ is only a subset of
$\Cal E$. So, a priori, more smooth exponential attractor $\Cal
M_d$ for the discrete semigroup $S(nL)$ is required.
\par
We overcome this difficulty by considering our solution semigroup
in a stronger phase space $\Cal E_1$. Then, due to the dissipative estimate
\eqref{1.strongdis}, we can construct a bounded and invariant absorbing set
$\tilde{\Cal B}_R\subset\Cal E_1$ for this semigroup. Furthermore,
repeating the arguments given in the proof of Theorem
\ref{Th2.fd}, but replacing the absorbing set $\Cal B_R\subset\Cal
E$ by the new absorbing set $\tilde{\Cal B}_R\subset\Cal E_1$, we
construct an exponential attractor $\Cal M_d\subset\tilde{\Cal
B}_R\subset\Cal E_1$ for the discrete semigroup $\{S(nL),n\in\Bbb
N\}$ acting in a stronger phase space $\Cal E_1$ (recall that the
attraction property is still formulated
in the topology of $\Cal E_{-1}$).
\par
Since $\Cal M_d$ is now bounded in the stronger space $\Cal E_1$,
a straightforward interpolation gives the desired H\"older
continuity
$$
\|S(t+h)\xi-S(t)\xi\|_{\Cal E_{-1}}\le C|h|^{1/2},
$$
where the constant $C$ is independent of $t$ and $\xi\in\Cal M_d$.
Thus, we can use the standard formula \eqref{4.standard} in order
to construct an exponential attractor $\Cal M\subset\Cal E_1$ for the continuous
semigroup $S(t)$ acting in $\Cal E_1$.
\par
So, we have constructed the exponential attractor $\Cal M$ which
is bounded in $\Cal E_1$ and attracts bounded {\it in the stronger space $\Cal E_1$} sets
in the topology of $\Cal E_{-1}$. Finally, in order to verify that
this attractor will attract exponentially bounded in $\Cal E$ sets
as well, it is sufficient to use \eqref{4.exp} and \eqref{1.lip}
together with the transitivity of exponential attraction, see
\cite{FGMZ}. Theorem \ref{Th4.aexp} is proved.
\end{proof}
\begin{remark}\label{Rem.problem} Theorems \ref{Th4.exp} and
\ref{Th4.aexp} show that any weak energy solution of equation
\eqref{1.main} possesses an asymptotic smoothing property and
belongs to $\Cal E_1$ up to the exponentially decaying terms.
However, the  dissipativity of, say, {\it
classical} solutions remain an open problem in the 3D case even
for
globally bounded nonlinearity $\phi$. Indeed, in order to "freeze" the coefficients and use the
maximal regularity of the linearized equation and
standard localization technique, we need to control the
$C^\alpha$-norm of $\Nx u$ and the $\Cal E_1$-energy gives only
$\Nx u\in L^6$. By this reason, the additional higher energy
estimates are necessary.
\par
The usual way to obtain such estimates (which works perfectly for
the case of semi-linear equations) is to multiply the equations by
$\Dt(\Dx u)+\eb\Dx^2 u$ or something similar. Unfortunately, in
the quasi-linear case this procedure produces the additional term like
$\phi'''(\Nx u)|D^2_x u|^2 D^3_x u$ which, in turn, produces the term
$\|D^2_xu\|^4_{L^4}$ (even in the simplest case of periodic BC and
bounded $\phi$). In the 2D case, this term can be properly
estimated by the interpolation
$$
\|D^2_x u\|^4_{L^4}\le C\|\Dx u\|^2_{L^2}\|\Nx\Dx u\|^2_{L^2}\le
C\|\xi_u\|_{\Cal E_1}^2\|\xi_u\|_{\Cal E_2}^2
$$
(where $\Cal E_2:= W^{3,2}(\Omega)\times W^{2,2}(\Omega)$)
and this allows to verify the existence of classical solutions (in
a fact, even in the case of arbitrary polynomial growth of $\phi$;
using slightly more accurate estimates, see \cite{En2}).
\par
In contrast to that, in the 3D case, we have
$$
\|D^2_x u\|^4_{L^4}\le
C\|\xi_u\|_{\Cal E_1}^{1}\|\xi_u\|^{3}_{\Cal E_2}
$$
which is not strong enough to establish the global existence of the
$\Cal E_2$-solutions.
\par
We also mention that the problem of further regularity of
solutions of \eqref{1.main} is closely related with the analogous
problem for the simplified pseudoparabolic equation
\begin{equation}\label{simple}
\gamma \Dt\Dx u+\Dx u+\Nx(\phi'(\Nx u))=h(t),\ \
u\big|_{\partial\Omega}=0
\end{equation}
for the appropriate external force $h(t)$. However, we do not know
how to establish the global existence and dissipativity of more
regular (than the $H^2$-ones) solutions for that equation.
\par
As we have already mentioned in the introduction, there is an
alternative way to construct more regular solutions in the
particular case of globally bounded $\phi''$ based on the
integro-differential form \eqref{0.funny} and perturbation
arguments. However, the growing in time bounds for more regular
norms seem unavoidable under that method.
\end{remark}

We finish this section by considering the semi-linear case
$\phi\equiv0$ (or more general $\phi''(v)\equiv const$).
Of course, in that case, the problems mentioned in the previous
remark do not appear and the factual regularity of the attractor
is restricted only by the regularity of the data (in particular,
if $\Omega$, $f$ and $g$ are $C^\infty$, the attractor will belong
to $C^\infty$ as well). Moreover,
in that
case, we are able to obtain slightly more strong result on the attraction property for weak energy solutions, namely,
 that the constructed global
($\Cal A$) and exponential ($\Cal M$) attractors of the solution
semigroup $S(t)$  acting in the energy phase space $\Cal E$
attract bounded sets in $\Cal E$ in the topology of the phase
space $\Cal E$ (and not only in a weaker topology of $\Cal E_{-1}$ or $\tilde{\Cal E}$).
This can be obtained from the following refining of Theorem
\ref{Th4.exp}.

\begin{proposition}\label{Prop4.semilin} Let the assumptions of
Theorem \ref{Th1.uniq} be satisfied and assume, in addition, that $\phi\equiv0$ (i.e.,
equation \eqref{1.main} is semi-linear). Then the exponential
attraction \eqref{4.exp} holds in the topology of the phase space
$\Cal E:=[H^1_0\cap L^{q+2}]\times L^2$, i.e., there exists a
positive constant $\gamma$ and a monotone function $Q$ such that
\begin{equation}
\dist_{\Cal E}(S(t)B,\{\xi\in\Cal E_1, \|\xi\|_{\Cal E_1}\le
K\})\le Q(\|B\|_{\Cal E})e^{-\gamma t},
\end{equation}
for every bounded set $B$ in $\Cal E$. Here $K$ is a sufficiently
large number and $\Cal E_1:=[H^2\times H^1_0]\times H^1_0$.
\end{proposition}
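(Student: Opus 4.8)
The plan is to retain the splitting $u(t)=v(t)+w(t)$ from the proof of Theorem~\ref{Th4.exp} and to upgrade the decay of the remainder $v$ from the $H^1$-norm to the full energy norm of $\Cal E$. With $\phi\equiv0$ the smooth part $w$ solves \eqref{1.smooth} and, by Lemma~\ref{Lem4.sm}, remains bounded in $H^2\subset C(\bar\Omega)$ uniformly in time; in particular the pair $(w(t),\Dt u(t))$ lies in a fixed ball of $\Cal E_1$, its second component being bounded in $H^1_0$ by Proposition~\ref{Prop1.dtsmooth}. Choosing this pair as the comparison element, the $\Dt u$-discrepancy vanishes identically and
\[
\dist_{\Cal E}\big(\xi_u(t),\{\|\xi\|_{\Cal E_1}\le K\}\big)\le\|v(t)\|_{H^1}+\|v(t)\|_{L^{q+2}}.
\]
Since $\|v(t)\|_{H^1}\le\|u(0)\|_{H^1}e^{-\beta t}$ by Lemma~\ref{Lem4.dec}, the whole statement reduces to the exponential decay of $\|v(t)\|_{L^{q+2}}$. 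This is precisely the place where I must exploit the full coercivity $f'(s)\ge-C+a|s|^q$ of \eqref{3.int}, which was discarded in Lemma~\ref{Lem4.dec}.

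The main observation is that, for $L\ge C$, equation \eqref{1.decay} (with $\phi\equiv0$) is the gradient flow $\gamma(-\Dx)\Dt v=-\Psi_w'(v)$ of the strictly convex functional
\[
\Psi_w(v):=\tfrac12\|\Nx v\|_{L^2}^2+\tfrac L2\|v\|_{L^2}^2+\int_\Omega\big(F(v+w)-F(w)-f(w)v\big)\,dx,
\]
whose Hessian $-\Dx+L+f'(v+w)$ is positive and which vanishes at $v=0$. Testing \eqref{1.decay} by $v$ and using $\l\Psi_w'(v),v\r\ge\Psi_w(v)$ yields
\[
\tfrac\gamma2\tfrac{d}{dt}\|\Nx v\|_{L^2}^2+G(t)\le0,\qquad G(t):=\Psi_{w(t)}(v(t)),
\]
while a Taylor expansion of the integrand together with \eqref{3.int} gives the lower bound $G(t)\ge c\|v\|_{L^{q+2}}^{q+2}-C(1+\|w\|_{L^\infty}^q)\|v\|_{L^2}^2$. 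Hence a \emph{pointwise} exponential decay of $G$, combined with the already known decay of $\|v\|_{L^2}$, controls $\|v\|_{L^{q+2}}$. Testing \eqref{1.decay} by $\Dt v$ produces the companion relation
\[
\tfrac{d}{dt}G(t)+\gamma\|\Nx\Dt v\|_{L^2}^2=R(t),\qquad R(t):=\int_\Omega\big(f(v+w)-f(w)-f'(w)v\big)\Dt w\,dx,
\]
in which $R$ collects the sole effect of the explicit time dependence of $w$.

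These two relations together deliver the pointwise bound. The first shows that $\|\Nx v(t)\|_{L^2}^2$ is nonincreasing and that $\int_t^\infty G(\tau)\,d\tau\le\tfrac\gamma2\|\Nx v(t)\|_{L^2}^2\le Qe^{-\beta t}$; the second shows that $G$ is almost nonincreasing, namely $G(t)\le G(\tau)+\int_\tau^t|R|$ for $\tau\le t$. Integrating the last inequality in $\tau$ over $[t-1,t]$ and inserting the tail estimate for $\int_{t-1}^tG$ gives $G(t)\le Qe^{-\beta' t}$, whence $\|v(t)\|_{L^{q+2}}^{q+2}\le Qe^{-\beta' t}$ and, by the reduction above, the asserted estimate $\dist_{\Cal E}(S(t)B,\{\|\xi\|_{\Cal E_1}\le K\})\le Q(\|B\|_{\Cal E})e^{-\gamma t}$.

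The hard part will be to show that $R(t)$ is genuinely exponentially small, since otherwise the companion relation only yields growth. I would first obtain an a priori bound on $\Dt w$ by inverting the Laplacian in \eqref{1.smooth}: as $\Dx w$, $Lw$, $f(w)$ and $h_u$ all lie in $L^2$ (using $w\in H^2\cap L^\infty$), this gives $\Dt w\in H^2\subset L^\infty$ with $\int_t^{t+1}\|\Dt w\|_{L^\infty}^2\,ds\le C_L$. Secondly, the integrand of $R$ is quadratic in $v$ with an extra factor $1+|v|^{q-1}+|w|^{q-1}$; the only nontrivial contribution $\int|v|^{q+1}|\Dt w|$ I would control by interpolating $\|v\|_{L^{q+1}}$ between the decaying norm $\|v\|_{L^2}$ and the merely bounded norm $\|v\|_{L^{q+2}}$, so that $R$ inherits exponential decay from $\|v\|_{L^2}\to0$ while $\|v\|_{L^{q+2}}$ enters only as a uniform bound. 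Once $\int_{t-1}^t|R|\le Qe^{-\beta' t}$ is secured, the argument of the previous paragraph applies. As usual, all of these manipulations would first be carried out on the smooth approximations of Theorem~\ref{Th3.strong} and then passed to the limit.
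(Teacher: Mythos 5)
Your proposal is correct and follows essentially the same route as the paper: the same splitting $u=v+w$ via \eqref{1.smooth}--\eqref{1.decay}, the same Lyapunov functional (your $G(t)=\Psi_{w(t)}(v(t))$ is exactly the paper's $M(v(t),w(t))$), the same two multiplications of \eqref{1.decay} by $v$ and by $\Dt v$ with the identity underlying \eqref{4.simple}, and the same auxiliary regularity \eqref{4.dtsm} for $\Dt w$. The only deviations are cosmetic bookkeeping --- you close via a uniform-Gronwall averaging of the almost-monotonicity of $G$ instead of the paper's $(t-T)$-weighted Gronwall, and you estimate the remainder $R$ by interpolating $\|v\|_{L^{q+1}}$ between $\|v\|_{L^2}$ and $\|v\|_{L^{q+2}}$ rather than invoking \eqref{4.rf}(2); just note that your pointwise quadratic bound on $R$ presumes a growth bound on $f''$ not contained in \eqref{3.int}, which is harmless here since $w$ is bounded in $L^\infty$ and the cruder bound $|R|\le C\bigl(|v|+|v|^{q+1}\bigr)$ already yields the required exponential smallness of $\int_{t-1}^t|R|\,ds$.
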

\begin{proof}Indeed, exactly as in the proof of Theorem
\ref{Th4.exp}, it is sufficient to verify that
\begin{equation}\label{4.siexp}
\dist_{H^1\cap L^{q+2}}(u(t),\{u_0\in H^2\cap H^1_0,\ \|u_0\|_{H^2}\le K\})
\le C_Re^{-\gamma t},
\end{equation}
for all trajectories $u$ starting from the absorbing
set $\Cal B_R$. In turns, in order to prove \eqref{4.siexp}, it
is sufficient to improve Lemma \ref{Lem4.dec} by replacing the
space $H^1$ in estimate \eqref{4.dec} by $H^1\cap L^{q+2}$.
Namely, we only need to prove that
\begin{equation}\label{4.deci}
\|v(t)\|_{L^{q+2}}\le C_R\|v(0)\|_{H^1}e^{-\gamma t}.
\end{equation}
To this end, we first note that, in the semi-linear case,
the solution $w(t)$ of problem \eqref{1.smooth} has the additional
regularity, namely,
\begin{equation}\label{4.dtsm}
\int_t^{t+1}\|\Dt w(s)\|^2_{H^2}\,ds\le C_R,
\end{equation}
where $C_R$ is independent of $t$. In order to see that, it is
sufficient to express $\Dt\Dx u$ from  equation \eqref{1.smooth}, multiply it by $\Dt \Dx w$
and note that the term $\Dx u$ and  the nonlinearity is under the control due to
the embedding $H^2\to C$ and estimate \eqref{4.sm} and the term
$h_u(t)$ is controlled by \eqref{4.ap}.
\par
Let us now multiply equation \eqref{1.decay} by $\Dt v$ and
use the following formula
$$
(f(w+v)-f(w),\Dt v)=\frac d{dt}[F(v+w)-F(w)-f(w)v]-(\Dt
w,f(w+v)-f(w)-f'(w)v).
$$
where $F(u):=\int_0^uf(v)\,dv$.
Then, we get
\begin{equation}\label{4.simple}
\gamma\|\Dt\Nx v\|^2_{L^2}+\frac d{dt}[(\Phi(v,w),1)+1/2\|\Nx v\|_{L^2}^2+L/2\| v\|^2_{L^2}]=(\Dt
w,R(v,w)),
\end{equation}
where
$$
\Phi(v,w):=F(v+w)-F(w)-f(w)v,\ \ R(v,w):=f(v+w)-f(w)-f'(w)v.
$$
We now recall the following estimates
\begin{equation}\label{4.rf}
\begin{cases}
1.\ |R(v,w)|+|\Phi(v,w)|\le C|v|^2+C_1[f(v+w)-f(w)].v,\\
2.\  |R(v,w)|\le
C_1|v|^2+C_2\Phi(v,w),
\end{cases}
\end{equation}
for some positive constants $C_1$ and $C_2$ which are independent of
$v$ and $w$ (these estimates can be easily derived from our
assumptions \eqref{3.int} on the function $f$, see \cite{Z1}). Using
estimate \eqref{4.rf}(2), together with the embedding $H^2\subset
C$, we transform \eqref{4.simple} as follows:
\begin{multline*}
\frac d{dt}[(\Phi(v,w),1)+1/2\|\Nx v\|_{L^2}^2+L/2\|
v\|^2_{L^2}]+\\+
C\|\Dt w\|_{H^2}[(\Phi(v,w),1)+1/2\|\Nx v\|_{L^2}^2+L/2\| v\|^2_{L^2}]
\le C_1\|\Dt w\|_{H^2}\|v\|_{H^1}^2.
\end{multline*}
Finally, multiplying the last inequality by $t-T$, denoting
$$
M(v,w):=[(\Phi(v,w),1)+1/2\|\Nx v\|_{L^2}^2+L/2\| v\|^2_{L^2}]
$$
(note that $M\ge0$ due to our choice of $L$)
and using \eqref{4.rf}(1), we have
\begin{multline*}
\frac{d}{dt}[(t-T) M(v,w)]+C\|\Dt w\|_{H^2}[(t-T) M(v,w)]\le\\\le  C_1(t-T+1)\|\Dt
w\|_{H^2}\|v\|_{H^1}^2+ C_2(f(v+w)-f(v),v)
\end{multline*}
which together with the Gronwall inequality gives
\begin{multline}\label{4.smsm}
M(v(T+1),w(T+1))\le
C\|v\|^2_{L^\infty([T,T+1],H^1)}+\\+C\int_T^{T+1}|(f(v(t)+w(t))-f(w(t)),v(t))|\,dt.
\end{multline}
Recall that the first term in the right-hand side of
\eqref{4.smsm} decays exponentially due to \eqref{4.dec} and, in
order to see that the second one also decays exponentially, it is
sufficient to multiply equation \eqref{1.decay} by $v$, integrate
over $[T,T+1]\times\Omega$ and use \eqref{1.decay} again. Thus, we
have proved that
$$
M(v(t),w(t))\le C_R\|v(0)\|^2_{H^1}e^{-\beta t}.
$$
Or which is the same (due to \eqref{4.dec} again),
\begin{equation}\label{4.Fdec}
(\Phi(v(t),w(t)),1)\le C_R\|v(0)\|^2_{H^1}e^{-\beta t}.
\end{equation}
In order to deduce \eqref{4.deci} from \eqref{4.Fdec}, it only
remains to note that our assumptions on the non-linearity $f$ give
the following inequality
$$
\Phi(v,w)\ge \kappa|v|^{q+2}-C|v|^2
$$
for some positive constants $\kappa$ and $C$ depending on the proper norms of $w$, see \cite{Z1}. Thus,
estimate \eqref{4.deci} is verified and Proposition
\ref{Prop4.semilin} is proved.
\end{proof}

\begin{corollary}\label{Cor4.last} Under the assumptions of
Proposition \ref{Prop4.semilin}, the global attractor $\Cal A$
(resp. the exponential attractor $\Cal M$) constructed above,
attracts (resp. attracts exponentially) bounded sets in $\Cal E$
in the topology of the space $\Cal E$ as well.
\end{corollary}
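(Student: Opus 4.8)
The plan is to upgrade the $\Cal E_{-1}$-attraction already established for $\Cal A$ (Theorem \ref{Th2.fd}) and for $\Cal M$ (Theorem \ref{Th4.aexp}) to the full $\Cal E$-topology, using Proposition \ref{Prop4.semilin} together with a soft interpolation argument. Set $\BB_K:=\{\xi\in\Cal E_1\st\|\xi\|_{\Cal E_1}\le K\}$ with $K$ so large that $\Cal A\cup\Cal M\subset\BB_K$ (possible since $\Cal A$ and $\Cal M$ are bounded in $\Cal E_1$ by Corollary \ref{Cor4.atreg} and Theorem \ref{Th4.aexp}). By Proposition \ref{Prop4.semilin}, for every bounded $B\subset\Cal E$,
\[
\dist_{\Cal E}(S(t)B,\BB_K)\le Q(\|B\|_{\Cal E})e^{-\gamma t},
\]
so each $\xi\in S(t)B$ admits a representative $\eta\in\BB_K$ with $\|\xi-\eta\|_{\Cal E}\le Ce^{-\gamma t}$.

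The key observation I would isolate is a reformulation of the interpolation $\Cal E=[\Cal E_{-1},\Cal E_1]_\theta$: \emph{if a pair is small in $\Cal E_{-1}$ and bounded in $\Cal E_1$, then it is small in $\Cal E$.} Componentwise this is immediate. For the velocity component one uses $L^2=[H^{-1},H^1]_{1/2}$, i.e.\ $\|\Dt v\|_{L^2}\le C\|\Dt v\|_{H^{-1}}^{1/2}\|\Dt v\|_{H^1}^{1/2}$. For the displacement, the $H^1$-part of the $\Cal E$-norm coincides with its $\Cal E_{-1}$-part (hence is directly controlled), and the remaining term is handled for \emph{arbitrary} $q$ by the $3$D embedding $H^2\subset L^\infty$ via
\[
\|v\|_{L^{q+2}}\le C\|v\|_{L^2}^{2/(q+2)}\|v\|_{L^\infty}^{q/(q+2)}\le C\|v\|_{H^1}^{2/(q+2)}\|v\|_{H^2}^{q/(q+2)}.
\]

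I would then combine the two attraction estimates through a three-term bound. For $\Cal M$, Theorem \ref{Th4.aexp} provides $\zeta\in\Cal M$ with $\|\xi-\zeta\|_{\Cal E_{-1}}\le Ce^{-\gamma t}$. Since $\Cal E\hookrightarrow\Cal E_{-1}$, the triangle inequality through $\xi$ gives $\|\eta-\zeta\|_{\Cal E_{-1}}\le Ce^{-\gamma t}$, while $\eta-\zeta$ remains bounded in $\Cal E_1$ because both $\eta\in\BB_K$ and $\zeta\in\Cal M$ lie in $\Cal E_1$-bounded sets. The interpolation observation then yields $\|\eta-\zeta\|_{\Cal E}\le Ce^{-\gamma' t}$ for some $\gamma'>0$, whence
\[
\|\xi-\zeta\|_{\Cal E}\le\|\xi-\eta\|_{\Cal E}+\|\eta-\zeta\|_{\Cal E}\le Ce^{-\gamma' t},
\]
which is the claimed exponential attraction of $\Cal M$ in $\Cal E$. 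The assertion for $\Cal A$ follows by running the identical three-term estimate with the (merely qualitative) $\Cal E_{-1}$-attraction of Theorem \ref{Th2.fd} replacing that of $\Cal M$, so that $\dist_{\Cal E}(S(t)B,\Cal A)\to0$.

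I expect the only genuinely delicate point to be the routing of the interpolation through the auxiliary point $\eta\in\BB_K$ rather than through $\xi$ itself. The trajectory $\xi=S(t)\xi_0$ is \emph{not} bounded in $\Cal E_1$: in the splitting of Proposition \ref{Prop4.semilin} its decaying component is only $H^1$-regular, so interpolating $\xi-\zeta$ directly is illegitimate. Replacing $\xi$ by a nearby $\Cal E_1$-bounded representative $\eta$, at the cost of the exponentially small $\Cal E$-error furnished by Proposition \ref{Prop4.semilin}, is precisely what repairs the boundedness needed for interpolation; the remainder is the standard interplay of attraction and uniform regularity, and the substance of the result was already spent in proving Proposition \ref{Prop4.semilin}.
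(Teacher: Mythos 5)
Your proof is correct and follows essentially the same route the paper intends: the corollary is stated there without proof as an immediate consequence of Proposition \ref{Prop4.semilin} combined with the $\Cal E_1$-boundedness of $\Cal A$ and $\Cal M$ (Corollary \ref{Cor4.atreg}, Theorem \ref{Th4.aexp}) and the already-established $\Cal E_{-1}$-attraction, upgraded to the $\Cal E$-topology by precisely the interpolation you use (the same device as in Remark \ref{Rem2.batr}). Your explicit routing through the auxiliary $\Cal E_1$-bounded point $\eta$, rather than interpolating $\xi-\zeta$ directly, is exactly the detail needed to make the paper's ``immediate'' claim rigorous.
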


\begin{remark}\label{Rem.semilinear} To the best
 of our knowledge, when $\Omega \in\R^3$ the growth restriction $q\le 4$ has been
always posed in order to study the energy solutions. As it follows
from our result, this growth restriction is unnecessary and we
have well-posedness, dissipativity and asymptotic regularity of
weak energy solutions without any restrictions on the growth
exponent $q$.
\end{remark}

\section{Related equations and concluding remarks}\label{s6}
Although only the case of quasi-linear strongly damped wave
equations in the form of \eqref{1.main} has been considered in the
paper, the developed methods have  general nature and can be
successfully
applied to many related problems. Some of them are briefly
discussed below. More detailed exposition of that and similar problems
will be given somewhere else.

\subsection{Strongly Damped Kirchhoff Equation}
We start with the so-called  Kirchhof equation with strongly damping
term which can be considered as a simplified version of the
quasi-linear wave equation \eqref{1.main}:
\begin{equation}
\begin{cases}\label{Kir}
\Dt^2u-\gamma\Dt\Dx u-\Dx u- \Phi(\|\Nx u\|^2_{L^2})\Dx u+f(u)=g, \\
u\big|_{\partial\Omega}=0, \ \ \xi_u(0):=(u(0),\Dt u(0))=\xi_0,
\end{cases}
\end{equation}
where $f$ satisfies \eqref{3.int} and $\Phi \in C^1(
\R^{+}\rightarrow \R^{+})$ satisfies the condition
$$
\Phi(s)s-\int_0^s\Phi(\tau)d\tau \geq 0, \ \ \ \forall s \in \R^{+}.
$$
Indeed, the non-linearity $\Phi(\|\Nx u\|_{L^2}^2)\Dx u=\Nx(\Phi(\|\Nx
u\|^2_{L^2})\Nx u)$ is milder  than the one $\Nx (\phi'(\Nx u)\Nx
u)$ considered before and, by this reason, all the above results
can be obtained (analogously, but with great simplifications) for
the Kirchhof equation \eqref{Kir} as well. In particular, arguing
as in Theorem \ref{Th1.uniq}, we obtain the uniqueness of energy
solutions for any growth exponent $q$ (for the non-linearity $f$).
To the best of our knowledge, this result was known before
for $q\le 5$ only.

\subsection{Membrane Equation} Our next application is the
so-called quasilinear strongly damped membrane equation which can
be considered as a natural 4th order (5th order, being pedantic)
analog of problem \eqref{1.main}:
\begin{equation}
\begin{cases}\label{2.beam}
\Dt^2u+\Dx \phi(\Dx u)+ \Dx^2 u +\gamma\Dt\Dx^2 u+f(u)=g, \\
u\big|_{\partial\Omega}=\Dx u\big|_{\partial\Omega}=0, \ \
\xi_u(0):=(u(0),\Dt u(0))=\xi_0,
\end{cases}
\end{equation}
where $\phi$ and $f$ are given non-linearities, $g\in L^2(\Omega)$ is a given
source function. We assume that the function $\phi\in C^2$ is monotone and satisfies the
analogue  of assumptions \eqref{1.int}
\begin{equation}\label{7.int}
 \kappa_2|u|^{p-1}\le\phi'(u)\le \kappa_1|u|^{p-1},
\end{equation}
for some positive $\kappa_i$ and some $p\ge1$ and the
non-linearity $f\in C^1$ satisfies the standard dissipativity assumption
\begin{equation}\label{71.int}
f(u)u\ge -C.
\end{equation}
Analogously the case of equation \eqref{1.main}, it is natural to introduce the weak ($\Cal E$)
and the strong energy ($\Cal E_1$) spaces via
\begin{equation}\label{7.energy}
\Cal E:=[W^{2,p+1}(\Omega)\cap H^1_0(\Omega)]\times L^2(\Omega)
\end{equation}
and
\begin{equation}\label{7.sten}
\Cal E_1:=\{\xi_u\in [H^3_\Delta\cup W^{2,p+1}(\Omega)]\times H^2_\Delta,\ \phi(\Dx u)\in
L^2(\Omega)\}\equiv [H^3_\Delta\cap W^{2,2p}(\Omega)]\times
H^2_\Delta,
\end{equation}
where $H^s_\Delta:=D((-\Delta)^{s/2})$ in $L^2(\Omega)$.
Here we have used that the condition $\phi(\Dx u)\in L^2$ is
equivalent to $\Dx u\in L^{2p}$).
\par
The energy functional now reads
$$
E(\xi_u):=1/2\|\Dx u\|^2_{L^2}+(\Phi(\Dx u),1)+1/2\|\Dt
u\|^2_{L^2}+(F(u),1)-(g,u),
$$
where $\Phi(z):=\int_0^z\phi(v)\,dv$ and $F(z):=\int_0^zf(v)\,dv$
and the standard energy estimate (multiplication of the equation
by $\Dt u+\eb u$) gives
\begin{equation}\label{7.dis}
\|\xi_u(t)\|_{\Cal E}^2+\int_0^t\|\Dt u(s)\|^2_{H^2}\,ds\le
Q(\|\xi_u(0)\|_{\Cal E}^2)e^{-\alpha t}+Q(\|g\|_{L^2})
\end{equation}
for some positive $\alpha$ and monotone $Q$.
\par
Thus, in contrast to the case of equation \eqref{1.main}, the
$C$-norm of the solution $u(t)$ is controlled by the energy norm.
By this reason, not any additional growth assumptions for $f$ is
now necessary.
\par
Another essential simplification appears in the proof of
the uniqueness of energy solutions. Indeed, following the proof of
Theorem \eqref{Th1.uniq}, we now need to multiply the equation for
the differences of two solutions by $\Dx^{-2}\Dt v$ (instead of
$\Dt(-\Dx)^{-1} v$, $v:=u_1-u_2$) and the only non-trivial term to
estimate will be
$$
(\phi(\Dx u_1)-\phi(\Dx u_2),(-\Dx)^{-1}\Dt v).
$$
Since $H^2\subset C$ and the $L^2$-norm of the $\Dt v$ is under
the control, this term can be estimated analogously to
\eqref{1.fest} {\it without} the restrictions on the growth
exponent $p$. Thus, the existence and uniqueness theorem for
energy solutions now holds for arbitrary growth of the
non-linearity $\phi$.
\par
In addition, denoting $v=\Dt u$, $\Cal E_{-1}:=H^{-2}_\Delta\times H^2_\Delta$
and repeating word by word the proof of propositions
\ref{Prop1.dtreg} and \ref{Prop1.dtsmooth}, we establish the
analogue of estimate \eqref{1.dtest} and the smoothing property:
\begin{multline}\label{7.sm}
\|\Dt u(t)\|_{H^2}^2+\|\Dt^2 u(t)\|_{H^{-2}}^2+\int_t^{t+1}\|\Dt^2
u(s)\|^2_{L^2}\,ds\le\\\le \frac{1+t^N}{t^N}\(Q(\|\xi_u(0)\|_{\Cal
E})e^{-\alpha t}+Q(\|g\|_L^2)\).
\end{multline}
Expressing now the term $\phi(\Dx)$ from equation \eqref{2.beam}
and using \eqref{7.dis} and \eqref{7.sm}, we obtain the control of
the $L^2$-norm of $\phi(\Dx u)$ or which is equivalent, the
control of the $W^{2,2p}$-norm of $u$. Thus, the semigroup
$S(t):\Cal E\to\Cal E$ associated with problem \eqref{2.beam}
possesses the partial smoothing property of the form:
\begin{equation}\label{7.fun}
S(t):[W^{2,p+1}(\Omega)\cap H^1_0(\Omega)]\times L^2(\Omega)\to
 [W^{2,2p}(\Omega)\cap H^1_0(\Omega)]\times [H^2(\Omega)\times
 H^1_0(\Omega)].
\end{equation}
The finite-dimensional global attractor $\Cal A$ ($(\Cal E,\Cal E_{-1})$-attractor,
to be more precise) can be now obtained repeating word by word the
proof of Theorem \ref{Th2.fd}. However in contrast to the case of equation \eqref{1.main}, the smoothing
\eqref{7.fun} together with the interpolation
$$
\|u\|_{W^{2,p+1}}\le
C\|u\|_{W^{2,2}}^{1/(p+1)}\|u\|^{p/(p+1)}_{W^{2,2p}}
$$
allows to verify that this attractor is finite-dimensional in
$\Cal E$ and attracts in the initial topology of the phase spce $\Cal E$ as well, see
Remark \ref{Rem2.batr}. Thus, we have verified the following
result.

\begin{theorem} \label{Th7.weak} Let the above assumptions hold.
Then, equation \eqref{2.beam} is well-posed and dissipative in the
phase space $\Cal E$ and the associated semigroup $S(t)$ possesses
a finite-dimensional global attractor $\Cal A$ in $\Cal E$.
\end{theorem}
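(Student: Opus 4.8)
The plan is to transfer, with the simplifications noted above, the entire machinery of Sections \ref{s1} and \ref{s.atr} for equation \eqref{1.main} to the membrane equation \eqref{2.beam}. First I would establish well-posedness and dissipativity. The dissipative estimate \eqref{7.dis} follows by multiplying \eqref{2.beam} by $\Dt u+\eb u$ and running the Gronwall argument of Theorem \ref{Th1.dis}; since $H^2_\Delta\subset C(\Omega)$ in dimension three, the $C$-norm of $u$ is already controlled by the energy, so no growth restriction on $f$ is needed and the bare dissipativity condition $f(u)u\ge -C$ suffices. Existence of weak energy solutions in $\Cal E$ is then obtained by the Galerkin method together with the monotonicity argument for passing to the limit in the term $\phi(\Dx u)$, exactly as in Theorem \ref{Th1.dis}.

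The technical heart is uniqueness. Following Theorem \ref{Th1.uniq}, I would write the equation for the difference $v:=u_1-u_2$ and test it with $(-\Dx)^{-2}\Dt v$, the higher-order analogue of $(-\Dx)^{-1}\Dt v$. After integration by parts the only delicate nonlinear term is $(\phi(\Dx u_1)-\phi(\Dx u_2),(-\Dx)^{-1}\Dt v)$; since $(-\Dx)^{-1}\Dt v\in H^2_\Delta\subset C(\Omega)$ is controlled by $\|\Dt v\|_{L^2}$, this term is estimated in the spirit of \eqref{1.fest} using only $\|\phi(\Dx u_1)-\phi(\Dx u_2)\|_{L^1}$ together with the monotonicity \eqref{4.int}. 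Crucially, because the test function sits in $C(\Omega)$ rather than in a gradient space, \emph{no} restriction on the growth exponent $p$ is required here — this is precisely where the $4$th-order structure pays off. The Gronwall lemma then yields Lipschitz continuity of the difference in the weaker space $\Cal E_{-1}$, so that the solution semigroup $S(t):\Cal E\to\Cal E$ is well-defined and dissipative.

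Next I would establish the partial smoothing property. Repeating almost verbatim the proofs of Propositions \ref{Prop1.dtreg} and \ref{Prop1.dtsmooth} for $v=\Dt u$ gives the instantaneous regularization \eqref{7.sm} of the $\Dt u$-component. Expressing $\phi(\Dx u)$ algebraically from \eqref{2.beam} and inserting \eqref{7.dis} and \eqref{7.sm}, I obtain an $L^2$-bound for $\phi(\Dx u)$, equivalently a $W^{2,2p}$-bound for $u$, which is the smoothing \eqref{7.fun}. With the squeezing/compactness estimate in hand — the analogue of \eqref{2.Ltr}, proved exactly as in Lemma \ref{Lem2.ltr} — the method of $l$-trajectories of Theorem \ref{Th2.fd} then produces a global attractor $\Cal A$ of finite fractal dimension, a priori only in the topology of $\Cal E_{-1}$.

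The last and, I expect, the most delicate step is to upgrade this to the \emph{strong} topology of $\Cal E$, i.e.\ to show that $\Cal A$ is finite-dimensional in $\Cal E$ and attracts bounded subsets of $\Cal E$ in the $\Cal E$-metric — something that could \emph{not} be achieved for \eqref{1.main} (see Remark \ref{Rem2.batr}). Here the smoothing \eqref{7.fun} into $W^{2,2p}$ is decisive: combined with the interpolation inequality $\|u\|_{W^{2,p+1}}\le C\|u\|_{W^{2,2}}^{1/(p+1)}\|u\|_{W^{2,2p}}^{p/(p+1)}$, it bounds the full $\Cal E$-norm of a difference of trajectories on $\Cal A$ by its $\Cal E_{-1}$-norm times a factor that is bounded on the attractor, which transfers both the finite-dimensionality and the attraction from $\Cal E_{-1}$ to $\Cal E$, exactly as in Remark \ref{Rem2.batr}. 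The main obstacle throughout is to verify that each transferred estimate survives \emph{without} any growth restriction on $p$ (and on $f$); this is guaranteed by the two structural features of \eqref{2.beam} — the embedding $H^2_\Delta\subset C(\Omega)$ and the coercivity of the higher-order principal part — which are precisely what fail for the second-order equation \eqref{1.main}.
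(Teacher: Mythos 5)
Your proposal is correct and takes essentially the same route as the paper: the same energy multiplier $\Dt u+\eb u$ for dissipativity, the same uniqueness multiplier $(-\Dx)^{-2}\Dt v$ with the sole delicate term $(\phi(\Dx u_1)-\phi(\Dx u_2),(-\Dx)^{-1}\Dt v)$ controlled via $H^2_\Delta\subset C(\Omega)$ and the $L^2$-control of $\Dt v$, the same smoothing \eqref{7.sm} and recovery of the $W^{2,2p}$-bound \eqref{7.fun} by expressing $\phi(\Dx u)$ from the equation, the $l$-trajectory construction of Theorem \ref{Th2.fd}, and the interpolation upgrade of Remark \ref{Rem2.batr} to the topology of $\Cal E$. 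The only cosmetic imprecision is that the interpolation gives a H\"older (fractional-power) bound of the $\Cal E$-distance in terms of the $\Cal E_{-1}$-distance rather than a bound \emph{linear} in it, but this transfers finite dimensionality and attraction exactly as you intend.
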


Let us now discuss the further regularity of
solutions/attractor of problem \eqref{2.beam}. We first note that
the analogue of the main result of Section \ref{s2} (strong
solutions) is now immediate since the non-linear term arising
after the multiplication of \eqref{2.beam} by $\Dx u$ can be
estimated via
$$
(\Dx \phi(\Dx u),\Dx u)=-(\phi'(\Dx u)\Nx\Dx u,\Nx\Dx u)\le0
$$
and, in contrast to Section \ref{s2}, we do not have here any
problems with boundary terms. By that reason, we need not neither
the localization technique of Lemma \ref{3.h2} nor the
approximations of $\phi$ by the bounded non-linearities (see the
proof of Theorem \ref{Th3.strong}) and the existence of a strong
solution can be verified by the usual Galerkin method.
\par
Finally, again in contrast to the case of equation \eqref{1.main},
the question about the classical solutions of \eqref{2.beam} can
be solved in a relatively simple way. Indeed, let $\theta:=\Dx u$.
Then this function solves the equation
\begin{equation}\label{7.simple}
\gamma\Dt\theta+\theta+\phi(\theta)=h_u(t):=(-\Dx)^{-1}\Dt^2
u+(-\Dx)^{-1}f(u)-(-\Dx)^{-1}g.
\end{equation}
Since $\phi$ is monotone and without loss of generality we  have  $h_u\in L^2([0,T],
C^\alpha(\Omega))$ (due to \eqref{7.sm} and embedding $H^2\subset
C$), then the usual maximum/comparison principle allows to verify
the dissipative estimate for $\theta$ in $C^\alpha(\Omega)$. Thus,
we will have the asymptotic smoothing property to $u(t)\in
C^{2+\alpha}(\Omega)$ on the attractor (in  fact, in the superlinear case
$p>1$, we even have that $\Dx u(t)\in L^\infty(\Omega)$ in a
finite time). As we have already mentioned in Remark
\ref{Rem.problem}, the regularity $\Dx u(t)\in C^\alpha(\Omega)$
is crucial for the application of the standard localization
technique and the existence of classical solutions. Thus, we have
the following result.

\begin{theorem}\label{Th7.smooth} Let the domain $\Omega$ and the
data $\phi$, $f$ and $g$ be smooth enough and satisfy
\eqref{7.int} and \eqref{71.int}. Then, the attractor $\Cal A$ of
problem \eqref{2.beam} consists of classical solutions. In
addition, if the above data is of class $C^\infty$, then the
attractor $\Cal A$ belongs to $[C^\infty(\Omega)]^2$ as well.
\end{theorem}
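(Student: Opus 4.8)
The plan is to show that every complete bounded trajectory $\xi_u\in\Cal K$ (so that $\Cal A=\Cal K\big|_{t=0}$, as in \eqref{2.str}) is in fact a classical solution, and a $C^\infty$ one under $C^\infty$ data. First I would record that on such trajectories the dissipative and smoothing estimates \eqref{7.dis} and \eqref{7.sm} hold uniformly for all $t\in\R$, so that $\Dt u\in H^2$, $\Dt^2 u\in H^{-2}$ and (expressing $\phi(\Dx u)$ from the equation as after \eqref{7.sm}) the $W^{2,2p}$-norm of $u$ are bounded uniformly in $t$. Using $H^2\subset C$ together with these bounds, the forcing $h_u:=(-\Dx)^{-1}\Dt^2 u+(-\Dx)^{-1}f(u)-(-\Dx)^{-1}g$ in \eqref{7.simple} is bounded in $C^\alpha(\Omega)$ uniformly in $t\in\R$, for some $\alpha\in(0,1)$. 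The problem is thereby reduced to upgrading the spatial regularity of $\theta:=\Dx u$.

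The crucial step is the uniform $C^\alpha$-bound on $\theta$, which I would obtain from the comparison structure of \eqref{7.simple}. Writing $\Psi(z):=z+\phi(z)$, equation \eqref{7.simple} reads $\gamma\Dt\theta+\Psi(\theta)=h_u$ and contains \emph{no $x$-derivatives of} $\theta$: at each fixed $x$ it is an ODE in $t$ with a monotone nonlinearity, $\Psi'=1+\phi'\ge1$ by \eqref{7.int}. Subtracting the equation at two points $x,y$ and using the monotonicity of $\Psi$ yields, after dividing by $|x-y|^\alpha$ and taking the supremum,
\begin{equation*}
\gamma\frac d{dt}[\theta(t)]_{C^\alpha}+[\theta(t)]_{C^\alpha}\le [h_u(t)]_{C^\alpha},
\end{equation*}
together with an analogous inequality for $\|\theta\|_{C}$ (comparison with constant super/sub-solutions). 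The Gronwall inequality then gives a dissipative bound for $\theta=\Dx u$ in $C^\alpha(\Omega)$, uniform in $t\in\R$. Since $\Dx u\in C^\alpha$ and $u\big|_{\partial\Omega}=0$ give $u\in C^{2,\alpha}(\Omega)$ by elliptic regularity, this establishes the asymptotic smoothing $u(t)\in C^{2+\alpha}(\Omega)$ announced before the theorem (and, when $p>1$, comparison with the super-linearly growing $\phi$ even confines $\theta$ in $L^\infty$ in finite time).

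With $\Dx u\in C^\alpha$ in hand, I would pass to the existence of classical solutions by the standard localization/freezing-coefficient technique. The quasi-linear fourth-order part of \eqref{2.beam} equals $(1+\phi'(\Dx u))\Dx^2 u+\phi''(\Dx u)|\Nx\Dx u|^2$; the coefficient $1+\phi'(\Dx u)$ is uniformly elliptic (it is $\ge1$) and Hölder continuous, while the remaining term, together with $f(u)$, is subordinate. Freezing the coefficient and applying Schauder/maximal-regularity estimates for the linear strongly-damped plate operator $\Dt^2+\gamma\Dt\Dx^2+\Dx^2$, localized near interior and boundary points, one closes the estimate and obtains the regularity of a classical solution. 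This is precisely the step that fails for equation \eqref{1.main} (cf. Remark \ref{Rem.problem}), and it becomes available here only because of the $C^\alpha$-control of $\Dx u$ just obtained. Finally, for $C^\infty$ data I would bootstrap: differentiating \eqref{7.simple} in $x$ reproduces an equation of the same monotone form for the derivatives of $\theta$ (the lower-order commutator terms being controlled by the already-bounded derivatives), while the strong damping $\gamma\Dt\Dx^2 u$ supplies the time regularity; iterating yields $\Cal A\subset[C^\infty(\Omega)]^2$. The main obstacle is the $C^\alpha$-bound of the second paragraph; everything downstream is routine localization and bootstrapping once that bound — unavailable for \eqref{1.main} but granted here by the derivative-free left-hand side of \eqref{7.simple} — is secured.
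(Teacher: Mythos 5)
Your proposal follows essentially the same route as the paper: the key step in both is to read \eqref{7.simple} as a pointwise-in-$x$ ODE with the monotone nonlinearity $\theta\mapsto\theta+\phi(\theta)$, extract a dissipative $C^\alpha$-bound on $\theta=\Dx u$ by the comparison/maximum principle, and then obtain classical solutions and the $C^\infty$-bootstrap via the standard freezing-of-coefficients localization, exactly as the paper indicates. The only slip is your claim that $h_u$ is bounded in $C^\alpha(\Omega)$ uniformly in $t$: the smoothing estimate \eqref{7.sm} controls $\Dt^2 u$ in $L^2(\Omega)$ only in the $L^2$-in-time sense, so what one actually gets is $h_u\in L^2([t,t+1],C^\alpha(\Omega))$ uniformly in $t$ (which is what the paper states), and this weaker bound still closes your Gronwall argument for the H\"older seminorm of $\theta$.
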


To the best of our knowledge, the existence and uniqueness of energy solutions to the problem
\eqref{2.beam}, was known only if the  nonlinearity $\phi(\cdot)$ has a linear growth, see \cite{BGS} where
the abstract differential equation of the form \eqref{abstr2} is
considered (and the assumptions on the linear operators $A_1$, $A_2$ and $N$ admit equation \eqref{2.beam}  with a sub-linear $\phi$
as a particular case. The attractor theory for that abstract equation has been developed in
\cite{Pi}, but under the additional assumption that $A_2^{-1/2}N$
is a compact operator which automatically {\it excludes} the case
of equation \eqref{2.beam} from consideration (even in the case of sub-linear $\phi$). In contrast to that, the
methods developed in our paper allow to give the {\it complete}
theory of equation \eqref{2.beam} (existence, uniqueness,
regularity, classical solutions, attractors, etc.) for arbitrary
growth rate of $\phi$.

\subsection{The Wave Equation with Structural Damping} To
continue, we note that our technique allows to improve essentially
of the so-called wave
equation with structural damping
\begin{equation}
\begin{cases}\label{6.beam}
\Dt^2u- \Dx u +\gamma(-\Dx)^{\alpha} \Dt u+f(u)=g, \\
u\big|_{\partial\Omega}=0, \ \ \xi_u(0):=(u(0),\Dt u(0))=\xi_0,
\end{cases}
\end{equation}
where $\alpha \in [1/2,1]$, $g\in L^2(\Omega)$ and  the function $f$
satisfies \eqref{3.int} for some exponent $q$. Indeed, following the scheme of Theorem \eqref{Th1.uniq}, for the
uniqueness of energy solutions, we need to multiply the equation
for difference between two solutions $u_1$ and $u_2$ by $v+\eb
(-\Dx)^{-\alpha}\Dt v$ and estimate the term
\begin{equation}\label{7.f}
(f(u_1)-f(u_2),(-\Dx)^{-\alpha}\Dt v)
\end{equation}
using that $(f(u_1)-f(u_2),v)\ge -K$ and that the $L^2$-norm of $\Dt v$ is
under the control. In the case $\alpha>3/4$, we have the embedding
$H^{2\alpha}\subset C$ and the term \eqref{7.f} can be estimated
exactly as in \eqref{1.fest}. A little more accurate analysis
shows that the same estimate still works for $\alpha=3/4$ without
any restrictions on the growth exponent $q$, but for $\alpha<3/4$
an additional growth restriction
\begin{equation}\label{7.gr}
q+2<\frac{6}{3-4\alpha},\ \ \ \alpha\in(1/2,3/4)
\end{equation}
is required (in  fact, this technique works also for
$\alpha\le1/2$, but, in this case, the standard scheme of proving
the uniqueness is preferable; by this reason, we discuss here the
case $\alpha>1/2$ only).
\par
Thus, analogously to Theorems \ref{Th1.uniq} and \ref{Th2.fd}, we
have the following result
\begin{theorem}\label{Th7.a} Let  the assumption
\eqref{3.int} be valid with the exponent $q$ satisfying \eqref{7.gr} (if
$\alpha<3/4$). Then, problem \eqref{6.beam} is well-posed and
dissipative in the energy space $\Cal E:=[H^1_0\cap L^{q+2}]\times
L^2$ and possesses the finite-dimensional global attractor $\Cal
A$ (in the sense of the definition given in Section \ref{s.atr}
with $\Cal E_{-1}:=H^{1-\alpha}_\Delta\times H^{\alpha}_\Delta$).
\end{theorem}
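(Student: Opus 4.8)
The plan is to transplant, essentially verbatim, the three-step program already carried out for equation \eqref{1.main}, using that the structural damping $\gamma(-\Dx)^{\alpha}\Dt u$ dissipates the energy through the term $\gamma\|\Dt u\|^2_{H^{\alpha}_\Delta}$. First I would establish existence and the dissipative estimate exactly as in Theorem \ref{Th1.dis}: multiplying \eqref{6.beam} by $\Dt u$ and integrating gives the energy identity
\[
\frac d{dt}\Big(\frac12\|\Dt u\|^2_{L^2}+\frac12\|\Nx u\|^2_{L^2}+(F(u),1)-(g,u)\Big)+\gamma\|\Dt u\|^2_{H^{\alpha}_\Delta}=0 ,
\]
and an additional multiplication by $\eb u$ produces, after fixing $\eb>0$ small, the exponentially decaying estimate in $\Cal E$ together with the integral control of $\int_t^{t+1}\|\Dt u(s)\|^2_{H^{\alpha}_\Delta}\,ds$. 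As in Theorem \ref{Th1.dis} this step is routine (Galerkin plus monotonicity for the passage to the limit) and, crucially, uses no growth restriction on $f$; it yields at least one energy solution.

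The heart of the matter is uniqueness, and this is where \eqref{7.gr} enters. For two energy solutions $u_1,u_2$ the difference $v:=u_1-u_2$ solves $\Dt^2 v-\Dx v+\gamma(-\Dx)^{\alpha}\Dt v+[f(u_1)-f(u_2)]=0$, and, following Theorem \ref{Th1.uniq}, I would test it with $v+\eb(-\Dx)^{-\alpha}\Dt v$. Multiplication by $v$ furnishes the coercive term $\|\Nx v\|^2_{L^2}$ together with the quasi-monotone contribution $(f(u_1)-f(u_2),v)\ge -C\|v\|^2_{L^2}$ coming from \eqref{3.int}, while multiplication by $(-\Dx)^{-\alpha}\Dt v$ spends the damping to produce $\gamma\|\Dt v\|^2_{L^2}$; together they build a functional $E_{-1}(v)$ equivalent to the squared $\Cal E_{-1}$-norm. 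The only genuinely dangerous term is \eqref{7.f}. Since $\Dt v\in L^2$, inverting the damping operator gives $(-\Dx)^{-\alpha}\Dt v\in H^{2\alpha}_\Delta$; for $\alpha>3/4$ the embedding $H^{2\alpha}_\Delta\subset C(\Omega)$ lets one bound \eqref{7.f} through $\|f(u_1)-f(u_2)\|_{L^1}$ exactly as in \eqref{1.fest}, for arbitrary $q$, and a slightly sharper version of the same computation covers the borderline $\alpha=3/4$. For $\alpha\in(1/2,3/4)$ the critical Sobolev embedding $H^{2\alpha}_\Delta\subset L^{6/(3-4\alpha)}(\Omega)$ replaces $L^{\infty}$, and pairing it by H\"older with $f(u_1)-f(u_2)\in L^{(q+2)/(q+1)}(\Omega)$ (this membership coming from $u_i\in L^{q+2}$ and the growth $|f'(s)|\le C(1+|s|^q)$) forces precisely $q+2\le 6/(3-4\alpha)$; the strict inequality \eqref{7.gr} then leaves room to absorb an $\eb\|\Dt v\|^2_{L^2}$ by interpolation, the remaining factors being controlled by the $\Cal E$-bounds on $u_1,u_2$. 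The Gronwall lemma applied to the resulting differential inequality gives the Lipschitz continuity of $S(t)$ in the weaker space $\Cal E_{-1}$, and hence uniqueness.

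With the semigroup $S(t)\colon\Cal E\to\Cal E$ well defined, the finite-dimensional $(\Cal E,\Cal E_{-1})$-attractor follows by repeating the argument of Theorem \ref{Th2.fd}. I would first upgrade the difference estimate above into the analogue of the squeezing inequality \eqref{2.Ltr} (integrating the Gronwall relation in the initial time and expressing $\Dt^2 v$ from the equation to gain a weak time-derivative control), then lift the dynamics to the space of $L$-trajectories, where the trajectory space $\Cal H_1$ embeds compactly into $\Cal H$, and invoke the method of $l$-trajectories as in the proof of Theorem \ref{Th2.fd}. The main obstacle throughout is the uniqueness estimate for \eqref{7.f} in the weakly damped regime $\alpha\in(1/2,3/4)$: because $\Dt u$ is only $L^2$-regular, inverting $(-\Dx)^{\alpha}$ gains exactly $H^{2\alpha}_\Delta$, which just fails to embed into $L^{\infty}$, so the whole argument hinges on matching the Sobolev exponent $6/(3-4\alpha)$ against the growth $q+2$ of the nonlinearity, which is precisely what dictates the restriction \eqref{7.gr}.
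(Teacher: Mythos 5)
Your proposal follows essentially the same route as the paper, which proves Theorem \ref{Th7.a} by exactly this scheme: the energy estimate as in Theorem \ref{Th1.dis}, uniqueness by testing the difference equation with $v+\eb(-\Dx)^{-\alpha}\Dt v$ and estimating the critical term \eqref{7.f} via $H^{2\alpha}_\Delta\subset C$ for $\alpha>3/4$ and via the Sobolev exponent $6/(3-4\alpha)$ matched against $q+2$ for $\alpha\in(1/2,3/4)$ (which is precisely where \eqref{7.gr} comes from), and then the finite-dimensional $(\Cal E,\Cal E_{-1})$-attractor by repeating the $l$-trajectory argument of Theorem \ref{Th2.fd}. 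Your H\"older pairing of $f(u_1)-f(u_2)\in L^{(q+2)/(q+1)}$ with $(-\Dx)^{-\alpha}\Dt v\in H^{2\alpha}_\Delta$, with the strict inequality in \eqref{7.gr} leaving room for the $\eb$-absorption by interpolation, correctly fills in the detail the paper leaves implicit.
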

Furthermore, the only multiplication of equation \eqref{6.beam} by $\Dx
u$ is {\it insufficient} for proving the existence of strong
solutions if $\alpha\ne1$ since the additional terms $\|\Dt u\|_{H^1}^2$
and $\Dt(\Nx\Dt u,\Nx u)$ need to be estimated. However, in order to
overcome this difficulty,
we just need to prove the analogue of Theorems \ref{Prop1.dtreg}
and \eqref{Prop1.dtsmooth} (smoothing property for $\Dt u$) before.
Indeed, this smoothing property gives the control of the
$L^2([0,T],H^1)$ and $C([0,T],H^{1-\alpha})$-norms of $u$ and the
$C([0,T],H^{\alpha})$-norm of $\Dt u$ and that is {\it exactly}
what we need in order to be able to estimate the above mentioned
terms.
\par
Thus, the multiplication by $\Dx u$ finally works and we can prove
that the attractor $\Cal A$ consists of strong solutions belonging
to $\Cal E_1:=H^{\alpha}_\Delta\times H^{1+\alpha}_\Delta$. Since
$\alpha>1/2$, we have $H^{1+\alpha}\subset C$ and the further
regularity of the attractor (e.g., classical or
$C^\infty$-solution) can be obtained from the linear theory).
In addition, the analogue of Proposition \ref{Prop4.semilin} also
can be proved repeating word by word the arguments of Section
\ref{s4}. Thus, the attractor $\Cal A$ is finite-dimensional and
attracts the images of bounded sets in the topology of the initial
phase space $\Cal E$ as well.

\begin{theorem}\label{Th7.last} Let the assumptions of Theorem
\ref{Th7.a} hold and let $\Omega$, $f$ and $g$ be smooth enough.
 Then, the attractor $\Cal A$ consists of classical solutions and
 attracts the images of bounded sets in $\Cal E$ in the topology
 of the initial phase space $\Cal E$.
 \end{theorem}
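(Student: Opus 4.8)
The plan is to assemble the statement from the two strands already developed in Sections~\ref{s2} and~\ref{s4}, transposed to the structural damping operator $(-\Dx)^\alpha$: first the asymptotic smoothing that forces the attractor into the strong energy space $\Cal E_1$ and, through the embedding $H^{1+\alpha}\subset C$, into the class of classical solutions; second, the refined decay estimate in the spirit of Proposition~\ref{Prop4.semilin} that upgrades the attraction from $\Cal E_{-1}$ to $\Cal E$. The structural feature that makes everything go through is that the only nonlinearity in \eqref{6.beam} is $f(u)$, with no gradient nonlinearity, so once $u$ is continuous the equation is a genuinely semilinear perturbation of a linear problem whose semigroup is analytic for $\alpha\in(1/2,1]$.

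First I would establish the $\Dt u$-smoothing, i.e.\ the analogues of Propositions~\ref{Prop1.dtreg} and~\ref{Prop1.dtsmooth} for equation~\eqref{6.beam}. Differentiating the equation in time and testing the resulting equation for $\Dt u$ by the multiplier $\Dt u+c(-\Dx)^{-\alpha}\Dt^2 u$ (the counterpart, for the operator $(-\Dx)^\alpha$, of the multiplier $v+\frac2\gamma(-\Dx)^{-1}\Dt v$ used in Section~\ref{s1}) and then multiplying by $t^3$ exactly as in \eqref{1.dtlast}, one obtains the instantaneous smoothing for $t>0$. Interpolating with the basic energy estimate (multiplication by $\Dt u$, which yields $\int_t^{t+1}\|(-\Dx)^{\alpha/2}\Dt u\|_{L^2}^2\,ds$ bounded), this provides the a priori control of $\Dt u$ in $C([0,T],H^\alpha)\cap L^2([0,T],H^1)$ and of $u$ in $C([0,T],H^{1-\alpha})$.

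With these bounds in hand I would multiply \eqref{6.beam} by $\Dx u$. For $\alpha=1$ the damping controls $\|\Nx\Dt u\|_{L^2}^2=\|\Dt u\|_{H^1_0}^2$ directly and the argument of Lemma~\ref{Lem3.h2} applies at once; for $\alpha<1$, however, the dissipation only provides $\|(-\Dx)^{\alpha/2}\Dt u\|_{L^2}^2$, which lies strictly below the two terms $\|\Dt u\|_{H^1}^2$ and $\Dt(\Nx\Dt u,\Nx u)$ produced after writing $(\Dt^2 u,\Dx u)=-\Dt(\Nx\Dt u,\Nx u)+\|\Nx\Dt u\|_{L^2}^2$. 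These cross terms are therefore \emph{not} absorbed by the dissipative term, and this is the main obstacle of the whole argument; it is precisely why the $\Dt u$-smoothing has to be proved \emph{first}, since the control of $\Dt u$ in $C([0,T],H^\alpha)$ and of $u$ in $C([0,T],H^{1-\alpha})$ obtained above is exactly what dominates them and closes the estimate. The outcome is a dissipative bound in $\Cal E_1$, so that every complete bounded trajectory is a strong solution and the attractor $\Cal A=\Cal K|_{t=0}$ is bounded in $\Cal E_1$.

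Finally I would close the two remaining points. Since $\alpha>1/2$ we have $1+\alpha>3/2$ and hence $H^{1+\alpha}\subset C(\Omega)$, so on $\Cal A$ the displacement $u$ is continuous and $f(u)$ is as regular as $f$ permits; feeding this into the linear theory for the analytic semigroup generated by the structurally damped operator and bootstrapping (this is where the smoothness of $\Omega$, $f$, $g$ is used) gives that $\Cal A$ consists of classical, and under $C^\infty$-data of $C^\infty$, solutions. For the attraction in the strong topology I would repeat the argument of Proposition~\ref{Prop4.semilin} verbatim: use the analogous splitting $u=v+w$ as in \eqref{1.smooth}--\eqref{1.decay}, bound $w$ in the strong norm by the analogue of Lemma~\ref{Lem4.sm}, and show that $v$ decays in $H^1\cap L^{q+2}$ by testing the $v$-equation with $\Dt v$ and exploiting the structure \eqref{4.rf} of $f$; this transfers without change because the nonlinearity depends only on $u$, exactly as in the semilinear case of Section~\ref{s4}. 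Combining the $\Cal E$-decay of $v$ with the smoothness of $w$ upgrades \eqref{4.exp} to the $\Cal E$-topology, which together with the Lipschitz estimate \eqref{1.lip} and the transitivity of exponential attraction \cite{FGMZ} yields the asserted attraction of bounded subsets of $\Cal E$ in the topology of $\Cal E$.
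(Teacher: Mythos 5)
Your proposal follows essentially the same route as the paper: you prove the $\Dt u$-smoothing (the analogues of Propositions \ref{Prop1.dtreg} and \ref{Prop1.dtsmooth}, via the multiplier $\Dt u+c(-\Dx)^{-\alpha}\Dt^2 u$ and the $t^3$-trick) \emph{before} multiplying by $\Dx u$, use it to absorb exactly the two problematic terms $\|\Dt u\|_{H^1}^2$ and $\Dt(\Nx\Dt u,\Nx u)$ that the paper singles out, deduce classical regularity from $H^{1+\alpha}\subset C$ for $\alpha>1/2$ plus linear bootstrapping, and obtain the $\Cal E$-attraction by transposing Proposition \ref{Prop4.semilin} with the splitting \eqref{1.smooth}--\eqref{1.decay} and the transitivity of exponential attraction. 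This matches the paper's argument in both structure and key estimates, so no further comparison is needed.
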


To the best of our knowledge, the above mentioned results were
known before only under the growth restriction $q\le 4$, see
\cite{CCh} (which has been considered as a critical growth exponent
for that equation).


\begin{thebibliography}{00}

\bibitem{AG}
 E.Acerbi and G.Mingione,
Gradient estimates for a class of parabolic systems,
Duke Math. J., {\bf 136}, nom. 2 (2007), 285--320.

\bibitem{AB} G.~Andrews and J.~M.~Ball, Asymptotic behaviour and changes of phase in one-dimensional nonlinear
viscoelasticity, Journal of Differential Equations,  {\bf 44} (1982),
306--341.

\bibitem{AS} P.~Aviles and J.~Sandefur,  Nonlinear second order
equations with applications to partial differential equations, J.
Differential Equations {\bf 58} (1985), 404--427.

\bibitem{BV} A.V.Babin and M.I.Vishik, {\it  Attractors of evolutionary
equations}, North Holland, Amsterdam, 1992.

\bibitem{BHJPS} J.~M.~Ball, P.~J.~Holmes, R.~D.~James,R.~L.~Pego and P.~J.~Swart, On the
dynamics of fine structure, J. Nonlinear Sci. {\bf 1} (1991), 17--70.

\bibitem{BGS} H.~T.~Banks, D.~S.~Gilliam and V.~I.~Shubov,  Global solvability for damped abstract
nonlinear hyperbolic systems,  Differential Integral Equations {\bf
10} (1997),  309--332.

\bibitem{Ber}Z. Berkaliev,
Attractor of nonlinear evolutionary equation of viscoelasticity.
Mosc. Univ. Math. Bull., {\bf 40}, no. 5 (1985), 61--63.

\bibitem {BCCD} S.~M.~Bruschi, A.~N.~Carvalho, J.~W.~Cholewa, and T.~Dlotko,  Uniform exponential dichotomy and continuity
 of attractors for singularly perturbed damped wave equations.  J. Dynam. Differential Equations  18  (2006), 767--814.

\bibitem{CCh} A.~N.~Carvalho and J.~W.~Cholewa,  Attractors for strongly
damped wave equations with critical nonlinearities.
  Pacific J. Math.  207  (2002),  287--310.

\bibitem{CGW} F.~Chen, B.~Guo and P.~Wang,  Long time behavior of strongly
damped nonlinear wave equations, J. Differential Equations, {\bf
147} (1998), 231--241.

\bibitem{ChD}  J.~W.~Cholewa and T.~Dlotko,  Strongly damped wave equation in
uniform spaces.  Nonlinear Anal.  64  (2006), 174--187.

\bibitem{Cl1} J.~Clements, Existence theorems for a quasilinear evolution equation, SIAM J.
Appl. Math. {\bf 26} (1974), 745--752.

\bibitem{Cl2} J.~Clements, On the existence and uniqueness of solutions of the
equation $u\sb{tt}-\partial \sigma \sb{i}(u\sb{x\sb{i}})/\partial
x\sb{i}-D\sb{N}u\sb{t}=f$, Canad. Math. Bull., {\bf 18} (1975),
181--187.

\bibitem{DG} Zh.~Dai and B.~Guo,  Exponential
attractors of the strongly damped nonlinear wave equations. (English
summary) Recent advances in differential equations (Kunming, 1997),
149--159, Pitman Res. Notes Math. Ser.,{\bf 386}, Longman, Harlow,
1998.

\bibitem{Da} L.~Damascelli,  Comparison theorems for some quasilinear degenerate
elliptic operators and applications to symmetry and monotonicity
results, Ann. Inst. H. Poincaré Anal. Non Linéaire {\bf 15} (1998),
493--516.

\bibitem{E} Y.~Ebihara, On some nonlinear evolution equations with
strong dissipation, J. Diff.Eqns. {\bf 30} (1978), 149-164.

\bibitem{EFNT} A. Eden, C. Foias, B. Nicolaenko and R. Temam,  Exponential Attractors for Dissipative Evolution
Equations,
Masson: Paris, 1994.

\bibitem{EFZ} M. Efendiev, A. Miranville and S. Zelik, Exponential attractors for a nonlinear
reaction-diffusion system in $R^3$, C. R. Math. Acad. Sci. Paris,
{\bf 330} (2000), 713—718 .

\bibitem{En1}H.~Engler, Existence of radially symmetric solutions of strongly damped wave
equations. Nonlinear semigroups, partial differential equations and
attractors (Washington, D.C., 1985), 40--51, Lecture Notes in Math.,
{\bf 1248}, Springer, Berlin, 1987.

\bibitem{En2}H.~Engler, Strong solutions for strongly damped quasilinear wave equations.
The legacy of Sonya Kovalevskaya
  (Cambridge, Mass., and Amherst, Mass., 1985), 219--237, Contemp. Math., {\bf 64},
  Amer. Math. Soc., Providence, RI, 1987.

\bibitem{En3}H.~Engler,  Global regular solutions for the dynamic antiplane shear
problem in nonlinear viscoelasticity. Math. Z.,{\bf  202} (1989),
 251--259.

\bibitem{FGMZ} P. Fabrie, C. Galushinski, A. Miranville and S. Zelik, Uniform exponential
 attractors for a singular perturbed damped wave equation, Disc. Cont. Dyn. Sys., {\bf 10} (2004),
 211—238.

\bibitem{Fei} E. Feireisl and H. Petzeltova, Global existence for a quasi-linear evolution
 equation with a non-convex energy, Trans. Amer. Math. Soc., {\bf 354}, no. 4 (2002),
 1421--1434.

\bibitem{FiLaOn}W.~N.~Findleky, J.~S.~Lai and K.~O.~Onaran,  Creep and Relaxation of Nonlinear
Viscoelastic Materials, North Holland, Amsterdam, New York, 1976.

 \bibitem{Fi} W.~E.~Fitzgibbon,  Strongly damped quasilinear evolution equations,  J. Math. Anal. Appl. {\bf 79}
   (1981), 536--550.

\bibitem{FrNe}A.~Friedman and J.~ Necas, Systems of nonlinear wave equations
with nonlinear viscosity. Pacific J. Math. {\bf 135} (1988), 29–55.

\bibitem{GGZ} H.~Gajewski, K.~Gr{\" o}ger, K.~ Zacharias,   Nichtlineare Operatorgleichungen und
Operatordifferentialgleichungen, Mathematische Lehrb\"ucher und
Monographien, II. Abteilung, Mathematische Monographien, Band 38,
Akademie-Verlag, Berlin, 1974.

\bibitem{GM}J.-M.~Ghidaglia and A.~Marzocchi, Longtime behaviour of strongly damped wave equations,
global attractors and their dimension, SIAM J. Math. Anal., {\bf 22}
(1991),  879--895.

\bibitem{GMM}J.~M.~Greenberg, R.~.C.~MacCamy, and V.~J.~ Mizel,  On the existence,
uniqueness, and stability of solutions of the equation $\sigma
\sp{\prime}
 \,(u\sb{x})u\sb{xx}+\lambda u\sb{xtx}=\rho \sb{0}u\sb{tt}$, J. Math. Mech., {\bf 17} (1967/1968), 707--728.

 \bibitem{He}  J.~K.~Hale,  Asymptotic behavior of dissipative systems,
 Mathematical Surveys and Monographs no. 25, Amer. Math. Soc., Providence, R. I., 1988.

\bibitem{IMM} R.~Ikehata, T.~Matsuyama, M.~ Nakao, Global
solutions to the initial-boundary value
  problem for the quasilinear
  viscoelastic wave equation with a perturbation, Funkcial. Ekvac., {\bf 40} (1997),  293--312.

\bibitem{K86} V.~K.~ Kalantarov, Attractors for some nonlinear problems of mathematical
physics,
 Zap. Nauchn. Sem. Leningrad. Otdel. Mat. Inst. Steklov. (LOMI), {\bf 152} (1986), 50--54.

\bibitem{K88} V.~K.~ Kalantarov,  Global behavior of solutions to nonlinear problems of mathematical physics of classical
and nonclassical type, Postdoc Thesis, Leningrad, 1988.

\bibitem{KS} S.~Kawashima and Y.~Shibata,  Global existence and exponential
stability of small solutions
  to nonlinear viscoelasticity. Comm. Math. Phys., {\bf 148} (1992), 189--208.

\bibitem{KPS} T.~Kobayashi, H.~Pecher and Y.~Shibata,  On a global in time existence theorem of smooth
 solutions to a nonlinear wave equation with viscosity,  Math. Ann., {\bf 296} (1993), 215--234.

\bibitem{KLY} A.~I.~Kozhanov, N.~A.~ Larkin, N.~N.~Yanenko,  A mixed problem for a class of third-order equations,
  Sibirsk. Mat. Zh.  {\bf 22} (1981),  81--86.

\bibitem{Kn} J.~K.~Knowles, One finite antipolane shear for
incomprehesible elastic material, J. Austr. Math. Soc. Ser B,
\textbf{19} (1975/76), 400-415.


 \bibitem{Le} H.~A.~Levine,
Some additional remarks on the nonexistence of global solutions to
nonlinear wave equations. SIAM J. Math. Anal. {\bf 5} (1974),
138--146.

\bibitem{MP}
 J.~M{\'a}lek and D.~Pra{\v{z}}{\'a}k,
  Large time behaviour via the Method of {$l$}-trajectories.
 J.~Differential Equations,
 {\bf 181} (2002),
 243--279.

\bibitem{MM} V. P. Maslov, P. P. Mosolov,  Nonlinear wave equations perturbed by viscous
terms, Walter de Gruyter, Berlin, New York. 2000.

\bibitem{Ma} P.~Massatt, Limiting behavior for strongly damped nonlinear wave
equations, J. Differential Equations, {\bf 48} (1983), 334--349.

 \bibitem{NN} M.~Nakao, T.~ Nanbu,
Existence of global (bounded) solutions for some nonlinear evolution
equations of second order. Math. Rep. College General Ed. Kyushu
Univ., {\bf 10} (1975), 67--75.

\bibitem{On} K.~Ono, On global existence, asymptotic stability and blowing up of solutions
 for some degenerate non-linear wave equations of Kirchhoff type with a strong dissipation.
  Math. Methods Appl. Sci.,  {\bf 20} (1997), 151--177.

\bibitem{PB}
J.~Y.~Park and J.~J.~Bae, On the existence of solutions of strongly
damped nonlinear wave equations. Int. J. Math. Math. Sci., {\bf 23}
(2000), 369--382.

\bibitem{Pata}  V. Pata and M. Squassina, On the strongly damped wave
equation,
 Comm. Math. Phys, {\bf 253}, no. 3 (2005),  511--533.

\bibitem{PZ} V. Pata and S. Zelik, Smooth attractors for strongly
damped wave equations, Nonlinearity, {\bf 19} (2006), 1495—-1506.

\bibitem{Pe} H.~Pecher, On global regular solutions of third order partial differential
  equations, J. Math. Anal. Appl., {\bf 73} (1980), 278--299.

  \bibitem{Pi} G.~A.~Pinter, Global attractor for damped abstract nonlinear
  hyperbolic systems, Nonlinear Anal. {\bf 53} (2003), 653--668.

\bibitem{Ry} P.~Rybka, Viscous damping prevents propagation of singularities in the system of viscoelasticity,
 Proc. Royal Soc. Edinburgh A,  {\bf 127} (1997), 1067-1074.

  \bibitem{We} G.~F.~Webb,  Existence and asymptotic behavior for a strongly
damped nonlinear wave equation, Canad. J. Math., {\bf 32}
(1980), 631--643.

\bibitem{Ya} Y.~ Yamada, Quasilinear wave equations and related nonlinear
   evolution equations, Nagoya Math. J., {\bf 84} (1981), 31--83.

\bibitem{Ya1}Zh.~Yang, Cauchy problem for quasi-linear wave equations with nonlinear damping and source
terms, J. Math. Anal. Appl., {\bf 300} (2004), 218--243.

\bibitem{Ya2}Zh.~Yang, Cauchy problem for quasi-linear wave equations
    with viscous damping, J. Math. Anal. Appl., {\bf 320} (2006), 859--881.

 \bibitem{Tv} B.~Tvedt, Quasilinear Equations for Viscoelasticity of Strain-Rate Type,
 Archive for Rational Mechanics and Analysis, {\bf 189} (2008), 237--281.

\bibitem{Z1} S. Zelik, Asymptotic regularity of solutions of singularly perturbed damped wave equations
 with supercritical nonlinearities, Disc. Cont. Dyn. Sys.,  {\bf 11}(2004), 351—392.

\bibitem{Z2} S. Zelik, The attractor for a nonlinear reaction-diffusion system with a supercritical
 nonlinearity and it's dimension, Rend.  Accad. Naz. Sci. XL Mem. Mat. Appl., {\bf 24} (2000),
 1—-25.


\end{thebibliography}
\end{document}